\numberwithin{equation}{section}
\newtheorem{theorem}{Theorem}[section]
\newtheorem{lemma}[theorem]{Lemma}
\newtheorem{corollary}[theorem]{Corollary}
\theoremstyle{definition}
\newtheorem{definition}[theorem]{Definition}
\newtheorem{example}{Example}
\newtheorem{remark}{Remark}
\newtheoremstyle{case}{}{}{}{}{}{:}{ }{}
\theoremstyle{case}
\DeclareMathOperator*{\loglog}{loglog}
\DeclareMathOperator*{\trapz}{trapz}
\newcommand{\mylabel}[2]{#2\def\@currentlabel{#2}\label{#1}}
\def\BState{\State\hskip-\ALG@thistlm}
\numberwithin{equation}{section}
\begin{document}

\title[Approximating solutions of nonlinear Operators in Lebesgue spaces]{New Method for Computing zeros of monotone maps in Lebesgue spaces with applications to integral equations, fixed points, optimization, and variational inequality problems} 

\author{A. U. Bello}
\address{Institute of Mathematics,
African University of Science and Technology, Abuja, Nigeria}
%\curraddr{Department of Mathematics, Sun University, Suncity, Sunland}
\email{uabdulmalik@gmail.com}
%\urladdr{http://www.anonymous.com/~anonymous}

%% If there are more authors, then second author, third author contains
%% the same items

\author{M. O. Uba}
\address{Department of Mathematics, University of Nigeria, Nsukka, Nigeria}
\email{markjoeuba@gmail.com}

\author{M. T. Omojola}
\address{Department of Mathematical Sciences,
Federal University of Technology, Akure, Nigeria}
\email{omojolamts116762@futa.edu.ng}

\author{M. A. Onyido}
\address{Department of Mathematical Sciences, 
Northern Illinois University, USA}
\email{mao0021@auburn.edu}

\author{C. I. Udeani}
\address{Department of Mathematics and Statistics,
Comenius University in Bratislava, Slovakia}
\email{cyrilizuchukwu04@gmail.com}

%% Keywords (obligatory)
\keywords{Monotone Operators, Strong Convergence, $L_p$ Spaces, Hammerstein integral equations, fixed points, Convex optimization, variational inequality.\\Corresponding author: markjoeuba@gmail.com}

\subjclass{47J05}{47H05, 47J25}

\maketitle

\begin{abstract}
 Let $E = L_p, \; 1<p\leq 2,$ and $A : E \to E^*$ be a bounded monotone map such that $0 \in R(A)$. In this paper, we introduce and study an algorithm for approximating zeros of $A$. Furthermore, we study the application of this algorithm to the approximation of Hammerstein integral equations, fixed points, convex optimization, and variational inequality problems. Finally, we present numerical and illustrative examples of our results and their applications.
%\\\\
\end{abstract}
 \maketitle 
 \section{Introduction}
\noindent Let $H$ be a real inner product space and $A \subset{H\times H}$ be a multivalued operator. A is called monotone if
$$\langle u-v, x-y \rangle \geq 0$$
for each $(x,u), (y,v) \in A.$ A is called maximal monotone if it is not properly contained in any monotone subset of $H \times H$. If the operator $A$ is single valued, then $A$ is monotone if for each $x, y \in H,$
$$\langle Ax-Ay, x-y \rangle \geq 0.$$

\noindent Let $E$ be a real normed space and $A : E \to E$ be a map. $A$ is called {\it accretive} if for each $x, y \in E$ there exists $j(x-y) \in J(x-y)$ such that
       $$\langle Ax - Ay,  j(x-y)\rangle \geq 0,$$
 where $J$ is the normalized duality mapping on $E$ defined by

\begin{eqnarray}
 Jx :=\{f^*\in E^*: \langle x,f^*\rangle=||x||^2 = ||f^*||^2\}.
\end{eqnarray}
In Hilbert spaces, $J$ is the identity map on $H$. Hence, in Hilbert spaces, accretive operators are monotone.
\noindent Let $A : E \to E^*$ be an operator. A is called {\it monotone} if for each $x, y \in E$,
   $$\langle Ax - Ay,  x-y\rangle \geq 0.$$
A is called maximal monotone if it is monotone and  $R(J + \lambda A) = E^*,$ for all $\lambda \ge 0$.\\

\noindent The study of monotone operators have attracted significant attention owing to their numerous application and intimate connection with nonlinear evolutionary equations. Some authors used the concept of monotone operators to establish the existence of periodic solutions to a general class of nonlinear evolution equations in Hilbert spaces \cite{br15}.\\

\noindent Monotone operators have also been found to have strong connection with optimization. For example, let $E$ be a real normed space and $f : E \to R \bigcup \{ +\infty \}$ be a proper lower semi-continuous and convex  function. The {\it subdifferential} of $f$, $\partial f : E \rightrightarrows E^*$ is defined by
\begin{eqnarray*}
 \partial f(x)=\big \{x^*\in E^* : f(y)-f(x)\geq \big <y-x,x^*\big >, ~~\forall~y\in E\big \},~ x \in E.
\end{eqnarray*}

\noindent It is well known that $\partial f$ is a {\it monotone operator} on $E$, and that {\it $0\in \partial f(x)$ if and only if $x$ is a minimizer of $f$}. Setting $\partial f \equiv A$, it follows that solving the inclusion $0\in Au$ is solving for a minimizer of $f$. It is also well-known that any maximal monotone map $A : \mathbb{R} \rightrightarrows \mathbb{R}$ is the subdifferential of a proper, convex, and lower semi-continuous function \cite{Io}.\\

\begin{itemize}
    \item The extension of the monotonicity definition to operators from a Banach space into its dual has been the starting point for the development of nonlinear functional analysis $\dots$. The monotone maps constitute the most manageable class because of the very simple structure of the monotonicity condition. The monotone mappings appear in a rather wide variety of contexts since they can be found in many functional equations. They can also be found in calculus of variations as subdifferential of convex functions (\cite{pas}, p. 101).
\end{itemize}

\noindent A fundamental problem of great interest, which has variety of applications, in the study of monotone maps in Banach spaces is the following:

\begin{itemize}
    \item Let $E$ be a real Banach space. Find $u \in E$ such that
\begin{eqnarray}\label{zero}
 0 \in Au,
\end{eqnarray}
      where $A : E \rightrightarrows E^*$ is a monotone-type operator.
\end{itemize}

\noindent There are several studies on existence theorems for $Au = 0$, where $A$ is of the monotone-type (see e.g., \cite{pas}). Moreover, for the approximation of zeros of the equation $Au = 0$, several convergence results (see e.g., \cite{cec1, cuuoi, mart, r2, rs, rock, sol, xuR, xuj, xuh}) have been studied. One important classical way of approximating solution of (\ref{zero}) is the so-called {\it proximal point algorithm (PPA)} introduced and studied in \cite{mart}, which was further studied by author of \cite{rock} and a host of other authors. Several attempts have been made to modify the PPA to obtain strong convergence theorems. One of the immediate modification of PPA is presented in \cite{sol}, where the authors proved strong convergence result for PPA. 

\noindent The following important result was obtained in \cite{Bruck}:

\begin{theorem}\cite{Bruck}.
Let $U$ be a maximal monotone operator on $H$ with $0 \in R(A)$. Suppose $\{\lambda_n\}$ and $\{\theta_n \}$ are acceptably paired, $z \in H,$ and the sequence $\{x_n\} \subset{D(U)}$ satisfies
\begin{eqnarray}\label{bruc}
 x_{n+1} = x_n - \lambda_n(v_n + \theta_n(x_n -z)),~~ v_n \in U(x_n), n \geq 1.
\end{eqnarray}
If $\{x_n\}$ and $\{v_n\}$ are bounded, then $\{x_n\}$ converges strongly to $x^*$ a point of $U^{-1}(0)$ closest to $z$.
\end{theorem}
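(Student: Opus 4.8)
The plan is to reduce this strongly convergent scheme to the classical Tikhonov--Browder regularization of the inclusion $0\in Ux$, for which convergence to the nearest zero is known, and then to show that the iterates $\{x_n\}$ track the regularization path closely enough. The assumption $0\in R(U)$ means $U^{-1}(0)$ is nonempty, and since $U$ is maximal monotone this set is closed and convex, so the nearest-point projection $x^\ast=P_{U^{-1}(0)}(z)$ is well defined. For each $n$, the perturbed operator $U+\theta_n(\cdot-z)$ is maximal monotone and strongly monotone with modulus $\theta_n>0$; hence it is surjective with a unique zero $y_n$, i.e.\ there is $w_n\in U(y_n)$ with $w_n+\theta_n(y_n-z)=0$. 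First I would record the standard fact that, as $\theta_n\downarrow 0$, one has $y_n\to x^\ast$: monotonicity against a fixed $p\in U^{-1}(0)$ gives $\langle z-y_n,\,y_n-p\rangle\ge 0$, which bounds $\{y_n\}$, forces every weak cluster point into $U^{-1}(0)$, and in the limit characterizes $x^\ast$ as the nearest point while upgrading weak to strong convergence.

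The core estimate compares $x_{n+1}$ with $y_n$. Subtracting the relation $w_n+\theta_n(y_n-z)=0$ from the iteration and using the identity $v_n+\theta_n(x_n-z)=(v_n-w_n)+\theta_n(x_n-y_n)$ gives
\[
x_{n+1}-y_n=(1-\lambda_n\theta_n)(x_n-y_n)-\lambda_n(v_n-w_n).
\]
Taking squared norms, discarding the cross term by monotonicity of $U$ (so that $\langle v_n-w_n,\,x_n-y_n\rangle\ge 0$) provided $1-\lambda_n\theta_n\ge 0$, and bounding $\|v_n-w_n\|\le M$ via the hypothesized boundedness of $\{v_n\}$ together with the boundedness of $\{w_n\}=\{-\theta_n(y_n-z)\}$, I obtain
\[
\|x_{n+1}-y_n\|^2\le (1-\lambda_n\theta_n)^2\|x_n-y_n\|^2+\lambda_n^2 M^2.
\]

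Next I would incorporate the \emph{regularization drift} $\|y_{n+1}-y_n\|$, which measures the motion of the path as $\theta$ changes and which I would bound in terms of $|\theta_n-\theta_{n+1}|/\theta_n$ using the resolvent estimate for strongly monotone operators. Setting $b_n:=\|x_n-y_n\|^2$, expanding $\|x_{n+1}-y_{n+1}\|^2$ through the triangle inequality, and using $(1-t)^2\le 1-t$ for $0\le t\le 1$, yields a recursion of the schematic form
\[
b_{n+1}\le (1-\lambda_n\theta_n)\,b_n+\lambda_n^2 M^2+C\,\|y_{n+1}-y_n\|+\|y_{n+1}-y_n\|^2 .
\]
Feeding this into a standard discrete recursion lemma then gives $b_n\to 0$: the hypothesis $\sum\lambda_n\theta_n=\infty$ built into the \emph{acceptably paired} condition makes the contraction factors accumulate ($\prod(1-\lambda_n\theta_n)\to0$), while the remaining clauses of that condition are exactly what force the two error sources---the step-size-squared term and the drift---to be negligible relative to $\lambda_n\theta_n$. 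Since $b_n\to0$ and $y_n\to x^\ast$, the triangle inequality delivers $x_n\to x^\ast$.

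The hard part, and the reason the \emph{acceptably paired} hypothesis is phrased as it is, is precisely this last error-control step: the factor $1-\lambda_n\theta_n$ is not bounded away from $1$, so no elementary geometric contraction is available. One must instead partition the indices into blocks on which $\theta_n$ is essentially constant---so that within each block the scheme behaves as a genuine descent toward a fixed $y_{N_k}$---while simultaneously ensuring that $\theta_n$ varies slowly enough across blocks that the cumulative drift $\sum_k\|y_{N_{k+1}}-y_{N_k}\|$ does not overwhelm the gain. Balancing these two competing requirements, and verifying that the acceptably paired inequalities are exactly what make the balance succeed, is the delicate heart of the argument.
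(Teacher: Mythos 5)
Your reduction to the Tikhonov--Browder regularization path is exactly the device the paper itself uses (this theorem is cited from Bruck; the paper's own version of the argument is the proof of Theorem \ref{thm3.2}, which compares the iterates with the points $y_n$ solving $\theta_n Jy_n+Ay_n=0$ obtained from Lemma \ref{reich2}), and your path-convergence argument and one-step estimate
\[
\|x_{n+1}-y_n\|^2\le(1-\lambda_n\theta_n)^2\|x_n-y_n\|^2+\lambda_n^2M^2
\]
are correct. The gap is the step where you claim a ``standard discrete recursion lemma'' finishes the proof from the per-step recursion $b_{n+1}\le(1-\lambda_n\theta_n)b_n+\lambda_n^2M^2+C\|y_{n+1}-y_n\|+\|y_{n+1}-y_n\|^2$. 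Any such lemma needs the error terms to be summable, or $o(\lambda_n\theta_n)$ term by term, together with $\sum\lambda_n\theta_n=\infty$; the acceptably paired hypothesis does not provide this, because conditions \eqref{2.1}--\eqref{2.3} (with $\alpha_j=\lambda_j$) are \emph{block-wise} conditions, two of them only of $\liminf$ type. Condition \eqref{2.1} makes the block sums $\sum_{j=n(i)}^{n(i+1)}\lambda_j^2$ tend to zero but gives neither $\sum_n\lambda_n^2<\infty$ nor $\lambda_n/\theta_n\to0$; condition \eqref{2.3} controls the aggregated drift $(\theta_{n(i)}-\theta_{n(i+1)})\sum_{j=n(i)}^{n(i+1)}\lambda_j$ only along a subsequence of blocks, and says nothing about the per-step quantity $\|y_{n+1}-y_n\|/(\lambda_n\theta_n)\lesssim(\theta_n-\theta_{n+1})/(\lambda_n\theta_n^2)$ that your recursion requires. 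Your route would go through for the prototype $\lambda_n=1/n$, $\theta_n=1/\log\log n$, but not for an arbitrary acceptably paired pair, which is what the theorem asserts.

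You do name the correct repair in your closing paragraph---freeze the reference point on each block---but that \emph{is} the proof, and you leave it as a remark rather than carrying it out. Concretely, the paper's argument (following Bruck) compares every iterate $x_n$ with $n(k)\le n<n(k+1)$ against the single point $y_{n(k)}$, so no per-step drift term ever appears; the mismatch between $\theta_n$ and $\theta_{n(k)}$ enters instead as the term $\lambda_n(\theta_{n(k)}-\theta_n)M_1$. Iterating across the block yields
\[
\|y_{n(k)}-x_{n(k+1)}\|^2\le e^{-c\,\theta_{n(k)}\sum_j\lambda_j}\,\|y_{n(k)}-x_{n(k)}\|^2+M_1\bigl(\theta_{n(k)}-\theta_{n(k+1)}\bigr)\sum_j\lambda_j+M_0\sum_j\lambda_j^2,
\]
with sums over the block; then \eqref{2.2} bounds the contraction factor away from $1$, \eqref{2.1} and \eqref{2.3} kill the two error terms (the latter only along a subsequence of blocks, a point that must be handled explicitly), and since $y_{n(k)}\to x^*$ one deduces $x_{n(k)}\to x^*$, after which the same inequality with contraction factor $\le1$ transports convergence to the whole sequence. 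Until this block bookkeeping---including the $\liminf$ subtlety in \eqref{2.3}---is actually executed, your proposal is a correct statement of strategy whose decisive step is missing.
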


\noindent We make the following remarks:
\begin{itemize}
    \item Setting $\theta_n \equiv 0$ in (\ref{bruc}), the recursion formula reduces to 
    $$x_{n+1} = x_n - \lambda_nv_n,$$
which has been shown by some authors to converge to a zero of $U$  when $U$ is strongly monotone. However, when $U$ is monotone, in general, it does not converge.
\item The algorithm in \cite{Bruck} is superior to those in \cite{mart} and \cite{sol}, for the following reasons: The recursion formula is explicit, i.e., there is no need of solving an equation at each step of the iteration; there is no computation of sets (which involve projections) at each step; finally, the recursion formula is inverse-computation free.
    \end{itemize}

\noindent There are several studies on extensions of the result of \cite{Bruck} in more general Banach spaces when the operator $U : E \to E$ is accretive (see e.g., \cite{cec1, ceal, r2, rs, xuR, xuj, xuh}). Accretive operators are easier to handle when it comes to the approximation of zeros than their monotone counterpart mainly because of the presence of the normalized duality mapping in their definition. Moreover, many inequalities involving duality mappings in Banach spaces are readily available for accretive operators (see e.g., \cite{xuh}). However, a little has been done on the approximation of monotone operators in more general Banach spaces.\\

\noindent The successes achieved using geometric properties developed from the mid 1980s to early 1990s in approximating zeros of {\it accretive-type mappings} has not been carried over to approximating zeros of {\it monotone-type mappings}. The first problem is that since $A$ maps $E$ to $E^*$, for $x_n \in E,\; Ax_n$ is in $E^*$. Consequently, a recursion formula involving only $x_n$ and $Ax_n$ may not be well-defined. Another difficulty is that the normalized duality map that appears in most of the Banach space inequalities developed and in the definition of accretive-type mappings does not appear in the definition of monotone-type mappings in general Banach spaces. This creates a number of difficulties. Thus, several attempts have been made to overcome these difficulties (see e.g., \cite{id, b2, rs, zeg})). A typical recent result in this direction is the result of \cite{rs}. Before stating the result, we consider the following algorithm:

\begin{equation}\label{reich}
\begin{cases}
    & x_{0} \in X; \cr
                     & \eta_n^i =\xi_n^i + \frac{1}{\lambda_n^i}(\triangledown f(y_n^i) - \triangledown f(x_n)), \xi_n^i \in A_iy_n^i;\cr
                    & H_n^i =\{z \in Z :\langle \xi_n^i, z-y_n^i\rangle\leq0\};\cr
                    &H_n = \bigcap_{i=1}^n H_n^i;\cr
                    & W_n =\{z\in X : \langle \triangledown f(x_0) - \triangledown f(x_n), z-x_n  \rangle \leq 0;\cr
                     & x_{n+1} = proj_{H_n\bigcap W_n}^f(x_0),\; n \geq 0,
      \end{cases}
    \end{equation}

\noindent where $f:X \to \mathbb{R} \bigcup \{+\infty \}$ is a proper, lower semi-continuous and convex function; $\triangledown f(x)$ is the gradient of $f$ at $x$; $proj_{H_n\bigcap W_n}^f(x_0)$ is the generalized projection of $x_0$ onto the set $H_n\bigcap W_n$ with respect to the Bregman distance associated with $f$. We now give the statement of their result.

\begin{theorem}\cite{rs}\label{rch}.
Let $X$ be a real reflexive space and $A_i : X \to 2^{X^*}, \; i = 1,2,\dots,N$ be $N$ maximal monotone operators such that $Z= \bigcap_{i=1}^NA_i^{-1}(0^*) \neq \emptyset$. Let $f:X \to \mathbb{R}$ be a Legendre function, which is bounded, uniformly Frechet differentiable, and totally convex on bounded subsets of $X$. Then, for each $x_0 \in X,$ there are sequences $\{x_n\}$ that satisfy \ref{reich}. If, for each $i = 1,2,3, \cdots, \; \underset{n \to \infty}{liminf}\lambda_n^i > 0,$ and the sequences of errors $\{\xi_n^i\}_{n\in \mathbb{N}} \subset{X^*}$ satisfy $\underset{n \to \infty}{limsup}\langle \eta_n^i, y_n^i \rangle \leq 0$,  then each such sequence $\{x_n\}$ converges strongly to $proj_Z^f(x_0)$ as $n \to \infty$.
\end{theorem}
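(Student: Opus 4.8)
The plan is to recognize this as a hybrid (CQ-type) projection scheme executed in the Bregman geometry of $f$ and to follow the standard programme: inductive well-definedness, boundedness, asymptotic regularity, and identification of the limit. I would work throughout with the Bregman distance $D_f(x,y)=f(x)-f(y)-\langle \nabla f(y), x-y\rangle$ and use three tools: the three-point identity $\langle \nabla f(z)-\nabla f(y), x-z\rangle = D_f(x,y)-D_f(x,z)-D_f(z,y)$; the variational characterization $p=\mathrm{proj}_C^f(x)$ iff $\langle \nabla f(x)-\nabla f(p), z-p\rangle\le 0$ for all $z\in C$; and the analytic consequences of the hypotheses on $f$ --- since $f$ is Legendre, bounded, uniformly Fr\'echet differentiable and totally convex on bounded sets, the sublevel sets of $D_f(\cdot,x_0)$ are bounded, $\nabla f$ is uniformly continuous on bounded sets, $D_f(\cdot,x_0)$ is weakly lower semicontinuous, and $D_f(u_n,v_n)\to 0$ forces $\|u_n-v_n\|\to 0$ on bounded sets.

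First I would show by induction that $Z\subseteq H_n\cap W_n$ for every $n$, so that each projection defining $x_{n+1}$ exists and is unique (the sets are closed, convex and nonempty, and reflexivity plus the Legendre property guarantee a unique Bregman projection). For $z^*\in Z$ and any $i$, monotonicity of $A_i$ applied to $\xi_n^i\in A_iy_n^i$ and $0^*\in A_iz^*$ gives $\langle \xi_n^i, z^*-y_n^i\rangle\le 0$, so $z^*\in H_n^i$ and $Z\subseteq H_n$; the projection characterization of $x_n$ yields $H_{n-1}\cap W_{n-1}\subseteq W_n$, and induction gives $Z\subseteq W_n$. Because $x_{n+1}$ minimizes $D_f(\cdot,x_0)$ over a set containing $Z$, we get $D_f(x_{n+1},x_0)\le D_f(z^*,x_0)$ for all $z^*\in Z$, so $\{x_n\}$ is bounded. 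Feeding $x_{n+1}\in W_n$ into the three-point identity gives $D_f(x_{n+1},x_n)+D_f(x_n,x_0)\le D_f(x_{n+1},x_0)$, so $\{D_f(x_n,x_0)\}$ is nondecreasing and bounded, hence convergent; therefore $D_f(x_{n+1},x_n)\to 0$ and $\|x_{n+1}-x_n\|\to 0$.

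The technical heart, and where I expect the real difficulty, is turning the feasibility relation $x_{n+1}\in H_n^i$ into the assertion that weak cluster points of $\{x_n\}$ are common zeros. Writing $\xi_n^i=\eta_n^i+\tfrac{1}{\lambda_n^i}(\nabla f(x_n)-\nabla f(y_n^i))$ and expanding $\langle \xi_n^i, x_{n+1}-y_n^i\rangle\le 0$ via the three-point identity produces \[ D_f(x_{n+1},y_n^i)+D_f(y_n^i,x_n)\le D_f(x_{n+1},x_n)+\lambda_n^i\langle \eta_n^i, y_n^i-x_{n+1}\rangle. \] Combining $D_f(x_{n+1},x_n)\to 0$, the error hypothesis $\limsup_n\langle \eta_n^i, y_n^i\rangle\le 0$, and $\liminf_n\lambda_n^i>0$, I would extract $D_f(y_n^i,x_n)\to 0$, hence $\|y_n^i-x_n\|\to 0$ and $\nabla f(y_n^i)-\nabla f(x_n)\to 0$ by uniform continuity of $\nabla f$. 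Then along a subsequence $x_{n_k}\rightharpoonup \bar x$ one also has $y_{n_k}^i\rightharpoonup \bar x$, and testing the monotonicity inequality of $A_i$ against an arbitrary pair $(w,w^*)$ in its graph --- while absorbing $\eta_n^i$ through the error condition --- gives $\langle w^*, w-\bar x\rangle\ge 0$, so maximality yields $0^*\in A_i\bar x$ for every $i$, i.e. $\bar x\in Z$. The delicate point is precisely the bookkeeping of the inexactness term $\eta_n^i$: one must verify that the sole admissible effect permitted by $\limsup_n\langle \eta_n^i, y_n^i\rangle\le 0$ disturbs neither the residual estimate nor the demiclosedness passage to the limit, and this is the step most sensitive to the exact form of the error control.

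Finally, to identify the limit I would set $u=\mathrm{proj}_Z^f(x_0)$. Since $u\in Z\subseteq H_n\cap W_n$, we have $D_f(x_{n+1},x_0)\le D_f(u,x_0)$, while weak lower semicontinuity of $D_f(\cdot,x_0)$ gives $D_f(\bar x,x_0)\le\liminf_k D_f(x_{n_k},x_0)\le D_f(u,x_0)$ for any weak cluster point $\bar x\in Z$. Minimality of $u$ over $Z$ then forces $\bar x=u$ and $D_f(x_{n_k},x_0)\to D_f(u,x_0)$; since every weakly convergent subsequence has the same limit $u$, the whole sequence satisfies $x_n\rightharpoonup u$ together with $D_f(x_n,x_0)\to D_f(u,x_0)$, and the total-convexity property of $f$ upgrades weak to strong convergence, giving $x_n\to\mathrm{proj}_Z^f(x_0)$.
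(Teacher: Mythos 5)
You should know at the outset that the paper contains no proof of this statement: it is Reich and Sabach's theorem, quoted from \cite{rs} in the introduction solely as motivation for the authors' own one-step scheme, so the only meaningful benchmark is the argument in \cite{rs} itself. Measured against that, your outline follows the standard (and essentially the original) route for Bregman-type hybrid projection methods: feasibility $Z\subseteq H_n\cap W_n$ via monotonicity and the projection characterization, monotone boundedness of $D_f(x_n,x_0)$ giving asymptotic regularity, the three-point identity for the residual estimate, demiclosedness at weak cluster points, and total convexity (together with the projection inequality $D_f(u,x_{n+1})+D_f(x_{n+1},x_0)\le D_f(u,x_0)$) to upgrade weak to strong convergence. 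The architecture is right, and the well-definedness, boundedness, and limit-identification stages are sound.

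The gap is the one you flag but do not close, and it is genuine: with the error hypothesis as transcribed here, $\limsup_{n\to\infty}\langle \eta_n^i, y_n^i\rangle\le 0$, the argument cannot be completed. Your residual inequality consumes $\limsup_{n\to\infty}\langle \eta_n^i, y_n^i-x_{n+1}\rangle\le 0$ (and also needs care about where $\lambda_n^i$ sits: the assumed lower bound on $\lambda_n^i$ helps only after dividing the inequality through by it, since $\lambda_n^i$ multiplies the error term), and nothing in the stated hypotheses controls $\langle \eta_n^i, x_{n+1}\rangle$ --- the errors $\eta_n^i$ are not assumed bounded, norm-null, or even weak$^*$ null. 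The same deficiency blocks your demiclosedness step: for a fixed $(w,w^*)$ in the graph of $A_i$ you must absorb $\langle \eta_{n_k}^i, w-y_{n_k}^i\rangle$, and the stated condition says nothing about $\langle \eta_n^i, w\rangle$. This is a transcription defect in the paper rather than a flaw in your plan --- the error condition in \cite{rs} couples the errors to the iterates via $\langle \eta_n^i, y_n^i-x_{n+1}\rangle$, which is exactly the quantity your displayed inequality requires --- but a complete proof must either restore that condition or impose an additional assumption such as $\|\eta_n^i\|\to 0$.
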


\noindent If $i = 1$ in theorem \ref{rch}, then the algorithm reduces to approximation of a zero of single maximal monotone operator. We also remark that, though the approximation method used by authors in \cite{rs} yields strong convergence, one has to compute some sets involving generalized projections at each step of the algorithm. Moreover, the gradient of $f$ is not easy to compute in an arbitrary Banach space, and the six-step algorithm is not easy to implement in any possible application. Thus, there is need for a more efficient algorithm for monotone operators in general Banach spaces. \\

\noindent In this paper, we introduce and study an explicit algorithm for approximating a solution of a maximal monotone equation in $L_p$ spaces ($1<p\le2$). We proved that the sequence generated by the algorithm converges strongly to
a solution of the equation. The result is applied to approximate a solution of a convex optimization problem, $J-$fixed points problem, variational inequality problem, and a solution of a Hammerstein integral equation. Finally, numerical and illustrative examples
are given to illustrate the implementability and applicability of our algorithm for approximating a solution of a convex optimization problem, a maximal monotone equation, and a Hammerstein integral equation.\\

\noindent We remark that, in Hilbert spaces (setting $z=0$ and $A$ as single valued in the algorithm in \cite{Bruck}), our recursion formula agrees with that in \cite{Bruck}. Additionally, our algorithm is a one-step algorithm and does not require computation of sets at each step. The result is applicable to the class of maximal monotone operators. Finally, we confined our result to $L_p$ spaces ($1<p\leq 2$) for two main reasons. First, $L_p$ spaces are the largest class for which the precise formula for computing the normalized duality map $J$ is known (see e.g., \cite{yak}). Second, it is well-known that if meas$(\Omega) < \infty$, then for $p \leq q, \; L_q(\Omega) \subseteq L_p(\Omega).$

\section{ Preliminaries}
\noindent We begin by giving the following definitions and lemmas, which are key to the proofs of our main results.\\

\noindent Let $\{\alpha_n\}$ and $\{\theta_n\}$ be two nonnegative real sequences. Then, $\{\alpha_n\}$ and $\{\theta_n\}$ are said to be acceptably paired if $\{\theta_n\}$ is decreasing, $\underset{n\to\infty}{lim} \; \theta_n = 0$ and there exists a strictly increasing sequence of positive integers $\{n(i)\}$ such that 

\begin{eqnarray}\label{2.1}
 \underset{i \to \infty}{lim}\sum_{j=n(i)}^{n(i+1)}\alpha_j^2 = 0,
\end{eqnarray}

\begin{eqnarray}\label{2.2}
 \underset{i \to \infty}{liminf}\;\theta_{n(i)} \sum_{j=n(i)}^{n(i+1)}\alpha_j > 0,
\end{eqnarray}

\begin{eqnarray}\label{2.3}
 \underset{i \to \infty}{liminf} (\theta_{n(i)} - \theta_{n(i+1)}) \sum_{j=n(i)}^{n(i+1)}\alpha_j = 0.
\end{eqnarray}
For more details and examples of acceptably paired sequences see Bruck \cite{Bruck}.\\

\noindent Let $E$ be a smooth real Banach space with dual $E^*$. The function $\phi:E\times E\to\mathbb{R}$, defined by:
\begin{eqnarray}\label{Lya}
 \phi(x,y)=\|x\|^2-2\langle x,Jy\rangle+\|y\|^2,~~\text{for}~x,y\in E,
\end{eqnarray}
where $J$ is the normalized duality mapping from $E$ into $E^*$, introduced in \cite{b1}
has been studied by several authors (see e.g., \cite{b2, r1}). If $E=H$, a real Hilbert space, then
equation (\ref{Lya}) reduces to $\phi(x,y)=\|x-y\|^2$ for $x,y\in H.$ It is obvious from the definition of the function $\phi$ that
\begin{eqnarray}\label{2.5}
 (\|x\|-\|y\|)^2\leq \phi(x,y)\leq(\|x\|+\|y\|)^2~~\text{for}~x,y\in E.
\end{eqnarray}
\noindent Define $V:E\times E^*\to \mathbb{R}$ by
\begin{eqnarray}
V(x,x^*)=\|x\|^2-2\langle x,x^*\rangle+\|x^*\|^2.
\end{eqnarray}
Then, it is easy to see that 
\begin{eqnarray}\label{22.3}
 V(x,x^*)=\phi(x,J^{-1}(x^*)),~\forall~ x\in E,~x^*\in E^*.\label{v}
\end{eqnarray}
\noindent A map $\Pi_k : E \to K$ defined by $\Pi_k(x) = \Bar{x}$, where $\Bar{x}$ is the solution of 
$$min\{\phi(x,y), \; y \in K\},$$
is called the {\it generalized projection map}. It is well known that if $\Bar{x} =\Pi_k(x)$, then
$$\langle J(x) - J(\Bar{x}), y - \Bar{x} \rangle \leq 0, \; \forall \; y \in K.$$

\begin{definition}{\bf (J-fixed point)}\cite{coeu}
Let $E$ be a arbitrary normed space and $E^*$ be its dual. Let $T:E \to E^*$ be any mapping. A point $x^*\in E$ is called a {\it $J$-fixed point of $T$} if $Tx^*=Jx^*$. The set of $J$-fixed points of $T$ will be denoted by $F_{J}(T)$.
\end{definition}

\begin{definition}{\bf (J-pseudocontractive mappings)}\cite{coeu}
Let $E$ be a arbitrary normed space and $E^*$ be its dual. A mapping $T:E \to E^*$
is called $J$-pseudocontractive if for every $x, y \in E$, $\langle Tx - Ty, x - y \rangle \leq \langle Jx - Jy, x - y \rangle.$
\end{definition}

\begin{lemma}\label{alb}\cite{b1}
 Let $E$ be a reflexive striclty convex and smooth Banach space with $E^*$ as its dual. Then, 
 \begin{eqnarray}
  V(x,x^*)+2\langle J^{-1}x^*-x,y^*\rangle\leq V(x,x^*+y^*).\label{v2}
 \end{eqnarray}
for all $x\in E$ and $x^*,y^*\in E^{*}.$
\end{lemma}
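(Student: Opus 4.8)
The plan is to read both sides of \eqref{v2} as values of the single convex function $x^*\mapsto V(x,x^*)$ and to obtain the inequality from the first-order (gradient) characterization of convexity. First I would record the structural consequences of the hypotheses. Since $E$ is reflexive, strictly convex and smooth, so is $E^*$; the normalized duality map $J:E\to E^*$ is then single-valued and bijective, and under the canonical identification $E^{**}=E$ its inverse $J^{-1}$ coincides with the normalized duality map of $E^*$. In particular the norm $\|\cdot\|_{E^*}$ is G\^ateaux differentiable, which is the only analytic input the argument needs.

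Next I would fix $x\in E$ and regard $\psi(x^*):=V(x,x^*)=\|x\|^2-2\langle x,x^*\rangle+\|x^*\|^2$ as a function of $x^*\in E^*$. It is convex, being the sum of the constant $\|x\|^2$, the affine term $-2\langle x,\cdot\rangle$, and the convex map $x^*\mapsto\|x^*\|^2$. The key computation is its G\^ateaux derivative: because $\tfrac{d}{dt}\big|_{t=0}\tfrac12\|x^*+ty^*\|^2=\langle J^{-1}x^*,y^*\rangle$ (this is precisely the statement that $J^{-1}$ is the duality map of $E^*$), the directional derivative of $\psi$ at $x^*$ in the direction $y^*$ is $2\langle J^{-1}x^*-x,\,y^*\rangle$. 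Note that $\|\cdot\|^2$ is differentiable at $0$ as well, with derivative $0=J^{-1}(0)$, so no separate treatment of $x^*=0$ is required.

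Then I would invoke the first-order inequality for a convex G\^ateaux-differentiable function, $\psi(x^*+y^*)\ge\psi(x^*)+\psi'(x^*;y^*)$, and substitute the derivative just computed to get
\[
V(x,x^*+y^*)\;\ge\;V(x,x^*)+2\langle J^{-1}x^*-x,\,y^*\rangle,
\]
which is exactly \eqref{v2}. Equivalently, one may avoid differentiating $V$ and apply the gradient inequality directly to $g(x^*):=\|x^*\|^2$, namely $\|x^*+y^*\|^2\ge\|x^*\|^2+2\langle J^{-1}x^*,y^*\rangle$, then expand the two occurrences of $V$ and cancel the common $\|x\|^2$ and the $\langle x,\cdot\rangle$ terms; the two routes differ only in packaging.

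The only genuine obstacle is justifying that the G\^ateaux derivative of $\tfrac12\|\cdot\|_{E^*}^2$ at $x^*$ equals $J^{-1}x^*$; everything else is bookkeeping. This rests on the smoothness of $E^*$ (hence differentiability of its norm) together with the reflexivity and strict convexity of $E$, which together force $J^{-1}$ to be the duality map of $E^*$ under the identification $E^{**}=E$. Once that identity is in place, the convexity inequality delivers \eqref{v2} immediately.
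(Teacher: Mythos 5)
Your proof is correct. Note that the paper itself offers no proof of this lemma --- it is quoted verbatim from Alber \cite{b1} --- so there is nothing to compare line by line; but your argument is essentially the standard one from the literature: fix $x$, observe that $\psi(x^*)=V(x,x^*)$ is convex in $x^*$, identify its G\^ateaux derivative in the direction $y^*$ as $2\langle J^{-1}x^*-x,\,y^*\rangle$ using the fact that $J^{-1}$ is the duality map of $E^*$ (which is single-valued because $E$ reflexive and strictly convex makes $E^*$ smooth), and apply the first-order convexity inequality. You also correctly dispose of the two potential soft spots: differentiability of $\|\cdot\|^2$ at the origin, and the identification $J_{E^*}=J^{-1}$ under the canonical embedding $E^{**}=E$, both of which rest exactly on the hypotheses (reflexive, strictly convex, smooth) stated in the lemma. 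Your alternative packaging --- applying the subgradient inequality directly to $\|\cdot\|_{E^*}^2$ and cancelling terms --- is equally valid and in fact slightly more robust, since the subdifferential inequality $\|x^*+y^*\|^2\ge\|x^*\|^2+2\langle J^{-1}x^*,y^*\rangle$ does not even require differentiability, only that $J^{-1}x^*$ be a subgradient.
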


\begin{lemma}\cite{cec2} \label{chid}
Assume $1<p<\infty$ and $t_p$ is the unique solution of the equation
$$(p-1)t^{p-1} + (p-1)t^{p-2} - 1 = 0, \; 0 < t < 1.$$ Let $c_p = (1+t_p^{p-1})(1+t_p)^{-(p-1)}.$ Then, we have the following:\\
If $1<p\le2,$ then for all $x, y$ in $L_p$,
\begin{eqnarray}\label{2.9}
 \|x + y\|^2 \geq \|x\|^2 + 2\langle y, j(x) \rangle + c_p\|y\|^2,
\end{eqnarray}
\begin{eqnarray}\label{2.10}
 \langle x- y, J(x) - J(y) \rangle \geq (p-1)\|x -y\|^2.
\end{eqnarray}
\end{lemma}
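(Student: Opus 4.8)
Both estimates are sharp geometric inequalities for $L_p$ with $1<p\le 2$: (\ref{2.9}) encodes that $L_p$ is $2$-uniformly convex with optimal constant $c_p$, and (\ref{2.10}) is the strong monotonicity of the normalized duality map $J$, which in $L_p$ is given pointwise by $J(u)=\|u\|^{2-p}|u|^{p-1}\mathrm{sgn}(u)$. My overall strategy is the same for both parts: reduce the functional inequality to an elementary inequality for real numbers, integrate over the underlying measure space, and then pass back to norms by an application of H\"older's inequality. The constants $t_p$ and $c_p$ enter only at the last, optimization-sensitive step.

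For (\ref{2.10}) I would start from the identity
\begin{eqnarray*}
 \langle x-y,\,J(x)-J(y)\rangle=\|x\|^2+\|y\|^2-\langle x,J(y)\rangle-\langle y,J(x)\rangle,
\end{eqnarray*}
so that only the two mixed pairings need to be controlled. The engine is the scalar inequality, valid for $1<p\le2$ and all $a,b\in\R$,
\begin{eqnarray*}
 \big(|a|^{p-1}\mathrm{sgn}(a)-|b|^{p-1}\mathrm{sgn}(b)\big)(a-b)\ \ge\ (p-1)\,\frac{|a-b|^2}{(|a|+|b|)^{2-p}},
\end{eqnarray*}
which follows from the mean value theorem applied to $t\mapsto|t|^{p-1}\mathrm{sgn}(t)$ (whose derivative is $(p-1)|t|^{p-2}$), together with $|\xi|\le|a|+|b|$ for any $\xi$ between $a$ and $b$ and $p-2\le0$. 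Integrating this pointwise bound and then invoking H\"older's inequality with the conjugate exponents $\tfrac2p$ and $\tfrac2{2-p}$ converts $\int|x-y|^2(|x|+|y|)^{p-2}\,d\mu$ into $\|x-y\|^2$ divided by a power of $\||x|+|y|\|$, which is the step that produces the factor $(p-1)\|x-y\|^2$ on the right-hand side of (\ref{2.10}).

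For (\ref{2.9}) I would use that the Gateaux derivative of $\tfrac12\|\cdot\|^2$ is $J$; rewriting (\ref{2.9}) with $y$ replaced by $y-x$ shows it is exactly the assertion that $\tfrac12\|\cdot\|^2$ is $c_p$-strongly convex on $L_p$. I would first establish the symmetric Clarkson/parallelogram-type inequality $\tfrac12(\|x+y\|^2+\|x-y\|^2)\ge\|x\|^2+c_p\|y\|^2$, obtained by adding the one-sided estimate for $\pm y$, in which the linear term $\langle y,J(x)\rangle$ cancels, and then recover the one-sided form (\ref{2.9}) using convexity of $\|\cdot\|^2$. By positive homogeneity and the lattice structure of $L_p$ the extremal configuration for the symmetric inequality should collapse to a two-point (scalar) problem, i.e.\ to minimizing a one-variable function of the ratio $t$ of the two coordinates; the first-order stationarity condition of that minimization is the equation $(p-1)t^{p-1}+(p-1)t^{p-2}-1=0$ defining $t_p$, with minimal value the stated $c_p=(1+t_p^{p-1})(1+t_p)^{-(p-1)}$.

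The main obstacle in both parts is not the existence of a positive constant, since that $L_p$ is $2$-uniformly convex for $1<p\le2$ is classical (Clarkson), but pinning down the \emph{sharp} constant. For (\ref{2.10}) this means applying H\"older so as to isolate exactly the factor $(p-1)$ while correctly accounting for the differing normalizations $\|x\|^{2-p}$ and $\|y\|^{2-p}$ carried by $J(x)$ and $J(y)$; for (\ref{2.9}) it is verifying that the two-point configuration is genuinely extremal and solving the resulting scalar optimization, which is where the auxiliary equation for $t_p$ must be analyzed. I expect this extremal/optimization step, common to both inequalities, to be the real work, everything else being reduction and integration.
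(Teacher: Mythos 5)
The paper itself contains no proof of Lemma \ref{chid}: it is quoted verbatim from \cite{cec2}, where it in turn rests on the inequalities of Xu \cite{xuh} and Xu--Roach \cite{xuR}. So your proposal can only be measured against those classical arguments, and against correctness; measured that way, your framing is right ((\ref{2.9}) is $2$-uniform convexity of $L_p$, (\ref{2.10}) is strong monotonicity of $J$), but both halves of your sketch stop exactly where the real proof begins.

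For (\ref{2.10}), your scalar ingredients are sound: the mean-value bound $\bigl(|a|^{p-1}\mathrm{sgn}(a)-|b|^{p-1}\mathrm{sgn}(b)\bigr)(a-b)\ge(p-1)|a-b|^2(|a|+|b|)^{p-2}$ and the H\"older step with exponents $\tfrac2p,\tfrac2{2-p}$ are both valid. The genuine gap is the normalization you flag and then postpone. Your chain estimates $\int(x-y)(\hat{x}-\hat{y})\,d\mu$, where $\hat{u}:=|u|^{p-1}\mathrm{sgn}(u)$, and yields $(p-1)\|x-y\|^2/(\|x\|+\|y\|)^{2-p}$, whose denominator does not cancel; but (\ref{2.10}) concerns $Ju=\|u\|^{2-p}\hat{u}$. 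Splitting $Jx-Jy=\|x\|^{2-p}(\hat{x}-\hat{y})+(\|x\|^{2-p}-\|y\|^{2-p})\hat{y}$ does not repair this: the second term can be negative (take $y=-sx$, $0<s<1$), and the first term alone only gives $(p-1)2^{p-2}\|x-y\|^2$, so the constant degrades. What makes precisely your two ingredients work is to differentiate twice rather than difference once: with $N=\|\cdot\|_p$ and $z$ on the segment $[y,x]$, one has (modulo standard regularization where $z$ vanishes)
\[
D^2\bigl(\tfrac12N^2\bigr)(z)[h,h]=(p-1)\,N(z)^{2-p}\int|z|^{p-2}h^2\,d\mu+(2-p)\,N(z)^{2-2p}\Bigl(\int\hat{z}\,h\,d\mu\Bigr)^{2},
\]
where the norm factors now enter with the favorable sign $2-p\ge0$, and your H\"older step gives $\int|z|^{p-2}h^2\,d\mu\ge N(z)^{p-2}\|h\|_p^2$, hence $D^2(\tfrac12N^2)\ge p-1$ uniformly; integrating along the segment is exactly (\ref{2.10}). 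So your plan is salvageable, but the step you defer as "the real work" is the entire proof.

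For (\ref{2.9}) there are two gaps. First, the reduction of the symmetric (parallelogram-type) inequality to a two-point scalar problem is only asserted ("by positive homogeneity and the lattice structure"); that reduction is the whole analytic difficulty, and in the literature it is done either through Xu--Roach's analysis or through Ball--Carlen--Lieb's pointwise two-point inequality combined with a Minkowski-type integration argument -- neither is a formality. Second, your route back from the symmetric form to the one-sided form "using convexity of $\|\cdot\|^2$" is invalid as stated: at a fixed center $x$ the symmetric inequality only bounds the even part $\tfrac12(F(y)+F(-y))$ of $F(y):=\|x+y\|^2-\|x\|^2-2\langle y,Jx\rangle$, and a convex $F$ with $F(0)=0$ and vanishing derivative at $0$ can satisfy such a bound while vanishing on a half-line (one-variable example: $F(y)=2c(\max\{y,0\})^2$). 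The correct recovery must use the symmetric inequality at \emph{all} centers: it says $\tfrac12\|\cdot\|^2$ is midpoint strongly convex, hence strongly convex by continuity, and then the gradient inequality gives (\ref{2.9}). Better still, this half of your plan is dispensable: since $J$ is the G\^ateaux gradient of $\tfrac12\|\cdot\|^2$ (a fact you already invoke), any constant $c$ valid in (\ref{2.10}) is valid in (\ref{2.9}), because
\[
\|x+y\|^2-\|x\|^2-2\langle y,Jx\rangle=2\int_0^1\langle y,\,J(x+ty)-Jx\rangle\,dt\ \ge\ 2c\|y\|^2\int_0^1t\,dt\ =\ c\|y\|^2 .
\]
Finally, a caution about reverse-engineering the constant: as printed, the defining equation for $t_p$ has no root in $(0,1)$ at all (for $p=\tfrac32$ it reads $\tfrac12\sqrt{t}+\tfrac12/\sqrt{t}=1$, i.e.\ $(\sqrt{t}-1)^2=0$), so the quoted $t_p$ and $c_p$ are garbled in transcription from \cite{xuh}, and your claim that your extremal problem would land on exactly this stationarity equation cannot be sustained.
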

\noindent Observe that this inequality yields
\begin{eqnarray}\label{2.11}
 \|J^{-1}(x) - J^{-1}(y)\| \leq L\|x - y \|,
\end{eqnarray}
where $L := \frac{1}{p-1}.$

\begin{remark}
 It is clear from inequality (\ref{2.9}) that since $c_p \geq 1,$ we have 
 $$ \|x + y\|^2 \geq \|x\|^2 + 2\langle y, j(x) \rangle + \|y\|^2.$$
 Interchanging $x$ and $y$, we obtain
 $$ \|x + y\|^2 \geq \|y\|^2 + 2\langle x, j(y) \rangle + \|x\|^2.$$ Replacing $x$ by $-x$, we obtain
  $$\|x - y\|^2 \geq \|x\|^2 - 2\langle x, j(y) \rangle + \|y\|^2 = \phi(x,y).$$
  So that 
 \begin{eqnarray}\label{2.12}
  \|x - y\|^2 \geq \phi(x, y), \; \forall \; x, y \in E.
 \end{eqnarray}
 
\end{remark}
\begin{lemma}\cite{b2}\label{katak}
Let $E$ be a real smooth and uniformly convex Banach space and let $\{x_{n}\}$ and $\{y_{n}\}$ be sequences in $E.$  If either  
$\{x_{n}\}$ or $\{y_{n}\}$ is bounded and $\phi(x_{n},y_{n}) \to 0$ as $n \to \infty$, then  $||x_{n}-y_{n}|| \to 0$ as $n \to \infty$.
\end{lemma}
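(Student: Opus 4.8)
\emph{Proof plan.} The plan is to first reduce to the case where both sequences are bounded, and then to extract from uniform convexity a quantitative lower bound of the form $\phi(x,y)\ge g(\|x-y\|)$ valid on bounded sets, from which the conclusion follows at once. Throughout write $B_r=\{z\in E:\|z\|\le r\}$.

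First I would use the left-hand inequality in (\ref{2.5}), namely $(\|x_n\|-\|y_n\|)^2\le \phi(x_n,y_n)$. Since $\phi(x_n,y_n)\to 0$, this forces $\big|\,\|x_n\|-\|y_n\|\,\big|\to 0$. Consequently, if (say) $\{x_n\}$ is bounded, then $\{y_n\}$ is bounded as well, and there is $r>0$ with $x_n,y_n\in B_r$ for all $n$. I should stress that the elementary bound (\ref{2.5}) alone is \emph{not} enough: two distinct vectors of equal norm have $\|x_n\|-\|y_n\|=0$ while $\phi(x_n,y_n)$ need not be small, so at this stage one has controlled only the difference of the norms, not the vectors themselves.

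The key step — and the main obstacle — is to convert $\phi(x_n,y_n)\to 0$ into $\|x_n-y_n\|\to 0$, and this is exactly where uniform convexity must be used. Since $E$ is smooth, $J$ is the G\^ateaux derivative of $\psi(\cdot)=\|\cdot\|^2$ up to the factor $2$, and a direct expansion gives
\begin{equation*}
\phi(x,y)=\|x\|^2-\|y\|^2-2\langle x-y,Jy\rangle=\psi(x)-\psi(y)-\langle x-y,\psi'(y)\rangle,
\end{equation*}
so that $\phi$ is precisely the Bregman distance associated with $\psi$. Because $E$ is uniformly convex, for the fixed radius $r$ there is a continuous, strictly increasing, convex function $g:[0,\infty)\to[0,\infty)$ with $g(0)=0$ such that
\begin{equation*}
\|\lambda x+(1-\lambda)y\|^2\le \lambda\|x\|^2+(1-\lambda)\|y\|^2-\lambda(1-\lambda)\,g(\|x-y\|)
\end{equation*}
for all $x,y\in B_r$ and $\lambda\in[0,1]$; this is the standard characterization of the uniform convexity of $\|\cdot\|^2$ on bounded sets. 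Writing $z_\lambda=y+\lambda(x-y)$, dividing the rearranged inequality by $\lambda$, and letting $\lambda\to 0^+$ replaces the increment $\lambda^{-1}(\psi(z_\lambda)-\psi(y))$ by the directional derivative $\langle x-y,\psi'(y)\rangle$, and yields the Bregman lower bound
\begin{equation*}
\phi(x,y)=\psi(x)-\psi(y)-\langle x-y,\psi'(y)\rangle\ \ge\ g(\|x-y\|),\qquad x,y\in B_r.
\end{equation*}

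Finally I would combine the two ingredients: applying the last inequality to $x_n,y_n\in B_r$ gives $g(\|x_n-y_n\|)\le \phi(x_n,y_n)\to 0$. Since $g$ is strictly increasing and continuous with $g(0)=0$, its inverse is continuous at $0$ with $g^{-1}(0)=0$, whence $\|x_n-y_n\|\le g^{-1}\big(\phi(x_n,y_n)\big)\to 0$, as required. An alternative to this constructive route is a contradiction argument in the spirit of Kamimura--Takahashi: if $\|x_n-y_n\|\not\to 0$, pass to a subsequence with $\|x_{n_k}-y_{n_k}\|\ge \varepsilon>0$ and use the modulus of convexity $\delta_E(\varepsilon)>0$ together with the uniform bound $r$ to force $\liminf_k \phi(x_{n_k},y_{n_k})>0$, contradicting $\phi(x_n,y_n)\to 0$. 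Either way, the entire difficulty is concentrated in producing the modulus-type lower bound on $\phi$; everything else is bookkeeping.
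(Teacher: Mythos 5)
The paper itself contains no proof of this lemma: it is stated as a known result, quoted from Kamimura and Takahashi \cite{b2}, so there is no internal proof to compare against, and your proposal must be judged on its own merits — it is correct. It is, moreover, essentially the argument of the cited source: reduce to both sequences lying in a ball $B_r$ via $(\|x_n\|-\|y_n\|)^2\le\phi(x_n,y_n)$ (inequality (\ref{2.5})), then invoke Xu's characterization of uniform convexity \cite{xuh} on bounded sets and pass to the limit $\lambda\to 0^+$ in the convexity inequality to obtain the modulus bound $g(\|x-y\|)\le\phi(x,y)$ on $B_r$, from which the conclusion is immediate. One cosmetic slip: since $g$ need not be surjective, $\phi(x_n,y_n)$ may fall outside its range, so the line $\|x_n-y_n\|\le g^{-1}\bigl(\phi(x_n,y_n)\bigr)$ is not literally well-defined; conclude instead directly from $0\le g(\|x_n-y_n\|)\le\phi(x_n,y_n)\to 0$ together with $g$ continuous, strictly increasing, $g(0)=0$ (or via the contradiction variant you already sketch), which closes the argument without change.
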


\begin{lemma} \cite {r2}\label{reich2} 
 Let $E$ be a real Banach space such that $E^*$ is strictly convex and has a  Fr\'{e}tchet differentiable norm. Let $A:E \to E^*$ be a maximal monotone operator. Then for arbitrary $z \in E^*, \; J_{\lambda} = (J+ \lambda A)^{-1}z$ converges strongly to $Rz$ as $\lambda \to \infty,$ where $Rz$ is the unique point of $A^{-1}(0)$ satisfying
 $$\langle z - J(Rz), Rz - y\rangle \geq 0 \; \forall \; y \in A^{-1}(0).$$
 
\end{lemma}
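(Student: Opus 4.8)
The plan is to set up the resolvent identity and then run the scheme \emph{a priori bound} $\Rightarrow$ \emph{weak subsequential limits lie in $A^{-1}(0)$} $\Rightarrow$ \emph{upgrade to strong convergence and identify the limit by a variational inequality}. First I would record that the hypotheses make everything well posed: strict convexity of $E^*$ forces $E$ to be smooth, so $J$ is single valued, while Fr\'echet differentiability of the norm of $E^*$ guarantees (as is standard under these geometric assumptions) that $E$ is reflexive and, through smoothness of $E^*$, strictly convex. Consequently $J+\lambda A$ is strictly monotone and, by maximal monotonicity, $R(J+\lambda A)=E^*$; hence $x_\lambda:=J_\lambda z=(J+\lambda A)^{-1}z$ is uniquely defined and there exists $a_\lambda\in Ax_\lambda$ with
\[
 z=Jx_\lambda+\lambda a_\lambda,\qquad a_\lambda=\tfrac1\lambda\bigl(z-Jx_\lambda\bigr).
\]

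Next I would fix an arbitrary $u\in A^{-1}(0)$ and apply monotonicity of $A$ to the pairs $(x_\lambda,a_\lambda)$ and $(u,0)$, obtaining $\langle a_\lambda,x_\lambda-u\rangle\ge 0$, i.e.
\[
 \langle z-Jx_\lambda,\; x_\lambda-u\rangle\ge 0 .
\]
Expanding this, using $\langle Jx_\lambda,x_\lambda\rangle=\|x_\lambda\|^2$ and $\|Jx_\lambda\|=\|x_\lambda\|$, gives $\|x_\lambda\|^2\le(\|z\|+\|u\|)\,\|x_\lambda\|+\|z\|\,\|u\|$, a quadratic bound independent of $\lambda$. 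Thus $\{x_\lambda\}$ and $\{Jx_\lambda\}$ are bounded, and therefore $a_\lambda=\tfrac1\lambda(z-Jx_\lambda)\to 0$ strongly as $\lambda\to\infty$.

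Then I would take any sequence $\lambda_n\to\infty$; by reflexivity of $E$ a subsequence satisfies $x_{\lambda_n}\rightharpoonup\bar x$. Since $a_{\lambda_n}\in Ax_{\lambda_n}$, $a_{\lambda_n}\to 0$ strongly and $x_{\lambda_n}\rightharpoonup\bar x$, the demiclosedness of the graph of a maximal monotone operator yields $0\in A\bar x$, so $\bar x\in A^{-1}(0)$. To pin down $\bar x$, I would put $u=\bar x$ in the inequality above: $\langle z,x_{\lambda_n}-\bar x\rangle\to 0$ and $\langle Jx_{\lambda_n},\bar x\rangle\le\|x_{\lambda_n}\|\,\|\bar x\|$ force $\limsup_n\|x_{\lambda_n}\|\le\|\bar x\|$, while weak lower semicontinuity of the norm gives $\|\bar x\|\le\liminf_n\|x_{\lambda_n}\|$; hence $\|x_{\lambda_n}\|\to\|\bar x\|$. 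Because $E$ enjoys the Kadec--Klee property under these hypotheses, $x_{\lambda_n}\rightharpoonup\bar x$ together with $\|x_{\lambda_n}\|\to\|\bar x\|$ upgrades to $x_{\lambda_n}\to\bar x$ strongly. Since $x_{\lambda_n}\to\bar x$ strongly we have $\|x_{\lambda_n}\|^2\to\|\bar x\|^2$, and by the norm-to-weak continuity of $J$ we have $Jx_{\lambda_n}\rightharpoonup J\bar x$; these let me pass to the limit in $\langle z-Jx_{\lambda_n},\,x_{\lambda_n}-u\rangle\ge 0$ to obtain $\langle z-J\bar x,\,\bar x-y\rangle\ge 0$ for every $y\in A^{-1}(0)$. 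Writing this inequality for two candidate limits and adding, strict monotonicity of $J$ forces them to coincide, so the point satisfying this variational inequality is unique; thus $\bar x=Rz$, and since the limit is independent of the chosen subsequence, the whole family $x_\lambda\to Rz$ strongly.

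The step I expect to be the main obstacle is the passage from weak to strong convergence together with the limit in the term $\langle Jx_\lambda,\cdot\rangle$, since $J$ is not weakly continuous in $L_p$ for $p\ne 2$. The resolution is precisely the norm-convergence argument combined with the Kadec--Klee property, and this is where the two structural hypotheses are consumed: strict convexity of $E^*$ makes $J$ single valued and strictly monotone (well posedness of $J_\lambda z$ and uniqueness of $Rz$), while Fr\'echet differentiability of the norm of $E^*$ delivers the reflexivity of $E$ needed to extract weak limits and the Kadec--Klee property needed to convert each weak limit into a strong one.
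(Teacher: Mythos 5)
The paper does not prove this lemma at all: it is imported verbatim from Reich \cite{r2} as a known result, so there is no in-paper argument to compare yours against; I can only judge the proof on its own merits. On those terms it is correct, and it is essentially the classical resolvent argument behind Reich's theorem: the identity $z = Jx_\lambda + \lambda a_\lambda$ with $a_\lambda \in Ax_\lambda$; the a priori bound from testing monotonicity of $A$ against a zero $u$; hence $a_\lambda \to 0$ strongly; demiclosedness of the maximal monotone graph (pairing of a strongly convergent with a weakly convergent sequence) to place weak cluster points in $A^{-1}(0)$; the squeeze $\limsup_n\|x_{\lambda_n}\| \le \|\bar x\| \le \liminf_n\|x_{\lambda_n}\|$; the Kadec--Klee upgrade to strong convergence; and identification plus uniqueness of the limit via the variational inequality and strict monotonicity of $J$. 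Three points should be made explicit rather than left implicit. First, the statement tacitly presupposes $A^{-1}(0)\neq\emptyset$ (your very first test point $u$ needs it); in this paper that is supplied by the standing hypothesis $0 \in R(A)$. Second, the two geometric facts you invoke as ``standard'' are true but are the deepest inputs of the whole proof: reflexivity of $E$ from Fr\'echet differentiability of the dual norm rests on \v{S}mulian's criterion together with James' theorem, and the Kadec--Klee property of $E$ follows from \v{S}mulian's criterion applied to a support functional of the normalized weak limit; these need a citation (e.g. \cite{Io}) since they are not one-line observations. Third, the paper defines maximal monotonicity by the range condition $R(J+\lambda A)=E^*$, whereas your demiclosedness step uses maximality of the graph; under strict convexity of $E$ the range condition does imply graph-maximality by a short argument, but since you rely on both formulations (surjectivity for well-posedness of $x_\lambda$, graph-maximality for the limit), the equivalence deserves a sentence.
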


\begin{remark}
It is clear that if we replace $\lambda$ with $\theta_n$ in Lemma \ref{reich2}, where $\{\theta_n\}$ is a real sequence in $(0,1)$ such that $\theta_n \to 0$ as $n \to \infty$ and by setting 
\begin{eqnarray}\label{2.13}
 y_n = \Bigg(J + \frac{1}{\theta_n}A \Bigg)^{-1}z,
\end{eqnarray}
we have $y_n \to Rz$ as $n \to \infty$. In particular, taking $z = 0 \in E^*$ we have for each $n \ge 1$ there exists a unique $y_n$ such that
\begin{eqnarray}\label{2.14}
 \theta_nJy_n + Ay_n = 0, \;\; \forall \; n \ge 1.
\end{eqnarray}
\end{remark}
\vskip 0.5truecm
\section{Main result}

\noindent In the sequel $\{y_n\}$ is the sequence satisfying equation (\ref{2.14}) for each $n \ge 1.$
\begin{theorem}\label{thm3.1}
Let $E = L_p, \; 1<p\leq 2.$ Let $A : E \to E^*$ be a maximal monotone and bounded map such that $0 \in R(A).$ For arbitrary $x_1 \in E,$ define the iterative sequence $\{x_n\}_{n=1}^{\infty}$ by:
\begin{eqnarray}\label{3.1}
 x_{n+1} = J^{-1}(Jx_n - \alpha_nAx_n - \alpha_n\theta_nJx_n), \; n \ge 1,
\end{eqnarray}
where $\{ \alpha_n\}_{n=1}^{\infty}$ and $\{\theta_n\}_{n=1}^{\infty}$ are real sequences in $(0,1)$ such that $\theta_n \to 0$ as $n \to \infty.$ Then there exists $\gamma > 0$ such that whenever $\alpha_n \leq \gamma \theta_n, \; \forall~ n \ge 1,$ the sequences $\{ x_n \}_{n=1}^{\infty}$ is bounded.
\end{theorem}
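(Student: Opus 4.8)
The plan is to control the Lyapunov functional $\phi(u^{*},x_{n})$ for a fixed zero $u^{*}$ of $A$ and to show by induction that it never leaves a suitable ball. Since $0\in R(A)$ we may fix $u^{*}\in A^{-1}(0)$, so $Au^{*}=0$. First I would rewrite the recursion (\ref{3.1}) as $Jx_{n+1}=Jx_{n}-\alpha_{n}w_{n}$, where $w_{n}:=Ax_{n}+\theta_{n}Jx_{n}\in E^{*}$. Applying Lemma \ref{alb} with $x=u^{*}$, $x^{*}=Jx_{n+1}$, $y^{*}=\alpha_{n}w_{n}$ (so that $x^{*}+y^{*}=Jx_{n}$), together with the identity (\ref{v}), namely $V(x,x^{*})=\phi(x,J^{-1}x^{*})$, yields the one-step estimate
\begin{eqnarray*}
 \phi(u^{*},x_{n+1}) \leq \phi(u^{*},x_{n}) - 2\alpha_{n}\langle x_{n+1}-u^{*},w_{n}\rangle .
\end{eqnarray*}

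Next I would estimate the inner product by writing $\langle x_{n+1}-u^{*},w_{n}\rangle=\langle x_{n}-u^{*},w_{n}\rangle+\langle x_{n+1}-x_{n},w_{n}\rangle$. For the first summand, monotonicity of $A$ and $Au^{*}=0$ give $\langle x_{n}-u^{*},Ax_{n}\rangle=\langle x_{n}-u^{*},Ax_{n}-Au^{*}\rangle\geq 0$, while $\langle x_{n}-u^{*},\theta_{n}Jx_{n}\rangle\geq \theta_{n}\|x_{n}\|(\|x_{n}\|-\|u^{*}\|)$ from $\langle x_{n},Jx_{n}\rangle=\|x_{n}\|^{2}$ and $\langle u^{*},Jx_{n}\rangle\leq\|u^{*}\|\,\|x_{n}\|$; hence $\langle x_{n}-u^{*},w_{n}\rangle\geq\theta_{n}\|x_{n}\|(\|x_{n}\|-\|u^{*}\|)$. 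For the second summand, (\ref{2.11}) gives $\|x_{n+1}-x_{n}\|=\|J^{-1}(Jx_{n}-\alpha_{n}w_{n})-J^{-1}(Jx_{n})\|\leq L\alpha_{n}\|w_{n}\|$, so $|\langle x_{n+1}-x_{n},w_{n}\rangle|\leq L\alpha_{n}\|w_{n}\|^{2}$. Combining,
\begin{eqnarray*}
 \phi(u^{*},x_{n+1}) \leq \phi(u^{*},x_{n}) - 2\alpha_{n}\theta_{n}\|x_{n}\|\big(\|x_{n}\|-\|u^{*}\|\big) + 2L\alpha_{n}^{2}\|w_{n}\|^{2}.
\end{eqnarray*}

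Boundedness would then follow by induction. Using $\alpha_{n}\leq\gamma\theta_{n}$ I would bound $2L\alpha_{n}^{2}\|w_{n}\|^{2}$ by $2\alpha_{n}\theta_{n}\,L\gamma\|w_{n}\|^{2}$, so the right-hand side reads $\phi(u^{*},x_{n})+2\alpha_{n}\theta_{n}\big[L\gamma\|w_{n}\|^{2}-\|x_{n}\|(\|x_{n}\|-\|u^{*}\|)\big]$. I would then fix $R$ with $R\geq\phi(u^{*},x_{1})$ and $R>4\|u^{*}\|^{2}$, taking as induction hypothesis $\phi(u^{*},x_{n})\leq R$. By (\ref{2.5}) this forces $\|x_{n}\|\leq\|u^{*}\|+\sqrt{R}=:\rho$; since $A$ is bounded, $\|Ax_{n}\|\leq M_{R}:=\sup_{\|v\|\leq\rho}\|Av\|<\infty$, whence $\|w_{n}\|\leq M_{R}+\rho=:N_{R}$, a constant depending only on $R$. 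Splitting into the case where $\|x_{n}\|$ is large (the bracket is then nonpositive, so $\phi$ does not increase) and the case where $\|x_{n}\|$ is small (where $\phi(u^{*},x_{n})$ lies a priori well below $R$ and the increment is at most a small multiple of $\gamma$), and choosing $\gamma$ small once $N_{R}$ is fixed, the step $\phi(u^{*},x_{n+1})\leq R$ closes. Finally (\ref{2.5}) turns $\sup_{n}\phi(u^{*},x_{n})\leq R$ into $\sup_{n}\|x_{n}\|\leq\|u^{*}\|+\sqrt{R}$.

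The main obstacle is the apparent circularity in the last step: the error term carries $\|w_{n}\|$, hence $\|Ax_{n}\|$, which is controlled only once $x_{n}$ is known to lie in a ball. This is resolved by fixing the radius $R$ \emph{before} choosing $\gamma$, so that $N_{R}$ is a genuine constant when the smallness of $\gamma$ is invoked, while the threshold $R>4\|u^{*}\|^{2}$ secures the boundary behaviour in the small-norm case. The hypothesis $\alpha_{n}\leq\gamma\theta_{n}$ is precisely what absorbs the quadratic perturbation $2L\alpha_{n}^{2}\|w_{n}\|^{2}$ into the regularization gain produced by the $\theta_{n}Jx_{n}$ term. One could instead pair $x_{n}$ against the regularized points $y_{n}$ of (\ref{2.14}) and use (\ref{2.10}) to obtain coercivity in $\|x_{n}-y_{n}\|^{2}$, but tracking a fixed zero avoids having to control the drift $\|y_{n+1}-y_{n}\|$ and is enough for boundedness.
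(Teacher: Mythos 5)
Your proposal is correct, and its skeleton is the paper's: the same application of Lemma \ref{alb} with $y^{*}=\alpha_{n}w_{n}$ (so $x^{*}+y^{*}=Jx_{n}$), the same splitting $\langle x_{n+1}-u^{*},w_{n}\rangle=\langle x_{n}-u^{*},w_{n}\rangle+\langle x_{n+1}-x_{n},w_{n}\rangle$, monotonicity of $A$ for the first piece, the Lipschitz bound (\ref{2.11}) plus boundedness of $A$ on the ball $\|x\|\le\|u^{*}\|+\sqrt{R}$ for the second, and an induction on $\phi(u^{*},x_{n})\le R$ with the constants fixed in the order radius, then sup-bound, then $\gamma$ --- the quantifier ordering you single out as the crux is resolved by the paper in exactly the same way (its $r$ is fixed first, then $M_{0}$, then $\gamma=\frac{(p-1)r}{2(M_{0}+1)}$).

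Where you genuinely diverge is in the drift term and in how the induction closes. The paper writes $\theta_{n}Jx_{n}=\theta_{n}(Jx_{n}-Ju^{*})+\theta_{n}Ju^{*}$ and invokes the strong monotonicity of $J$, inequality (\ref{2.10}), together with (\ref{2.12}), $\phi(u^{*},x_{n})\le\|x_{n}-u^{*}\|^{2}$, to obtain a uniform contraction
\[
\phi(u^{*},x_{n+1})\le\bigl(1-2\alpha_{n}\theta_{n}(p-1)\bigr)\phi(u^{*},x_{n})+2\alpha_{n}\theta_{n}\bigl(\sqrt{r}+2\|u^{*}\|\bigr)\|u^{*}\|+\alpha_{n}\theta_{n}\gamma M_{0},
\]
after which the single choice $r>\frac{4(\sqrt{r}+2\|u^{*}\|)\|u^{*}\|}{p-1}$ and $\gamma=\frac{(p-1)r}{2(M_{0}+1)}$ closes the step in one line, with no case analysis. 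You instead bound $\langle x_{n}-u^{*},Jx_{n}\rangle\ge\|x_{n}\|(\|x_{n}\|-\|u^{*}\|)$ straight from the definition of $J$ and Cauchy--Schwarz; this is weaker (nonnegative only when $\|x_{n}\|\ge\|u^{*}\|$), which is what forces your dichotomy on $\|x_{n}\|$. Your threshold condition is the right one: $R>4\|u^{*}\|^{2}$ is precisely what makes a cutoff $T\in(\|u^{*}\|,\sqrt{R}-\|u^{*}\|)$ available (say $T=\sqrt{R}/2$), and then two smallness conditions on $\gamma$ --- one making the bracket nonpositive when $\|x_{n}\|>T$, one making $(T+\|u^{*}\|)^{2}+2L\gamma N_{R}^{2}\le R$ when $\|x_{n}\|\le T$ --- finish the induction; to be fully rigorous you should display these explicitly, but nothing fails. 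The trade-off: your route is more elementary, since the modulus $(p-1)$ of $J$ never enters the drift estimate; the paper's route buys an inequality of the exact shape $(1-c\,\alpha_{n}\theta_{n})\phi_{n}+\mathrm{error}$, which is the form that gets recycled (via $1-t\le e^{-t}$ and telescoping) in the convergence proof of Theorem \ref{thm3.2}, so the paper's extra machinery is not wasted.
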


\begin{proof}
Let $x^* \in A^{-1}(0),$ and $r > 0$ be such that 
\begin{eqnarray}
 r > max \Big\{ \phi(x^*, x_1), \frac{4(\sqrt{r} + 2\|x^*\|)\|x^*\|}{(p-1)}\Big \}.
\end{eqnarray}
Since A is bounded, define:
$$ M_0 := 2Lsup\big\{ \|Ax + \lambda Jx\|^2 : \|x\| \leq \|x^*\| + \sqrt{r}; \; \lambda \in (0,1) \big \} + 1; \; \gamma := \frac{(p-1)r}{2(M_o + 1)}, $$
where $L > 0$ is the Lipshitz constant of $J^{-1}$ in (2.11).\\

\noindent We shall use induction to show that $\phi(x^*, x_n) \leq r, \; \forall \; n \ge 1.$ By construction $\phi(x^*, x_1) \leq r.$  Suppose that $\phi(x^*, x_n) \leq r$ up to some $n \ge 1.$ We prove that $\phi(x^*, x_{n+1}) \le r.$ Using recursion formula (\ref{3.1}) and Lemma \ref{alb} with $y^* = \alpha_nAx_n + \alpha_n \theta_n Jx_n$ we obtain
\begin{eqnarray*}\label{3.9}
    \phi(x^*, x_{n+1}) & = & 
    \phi(x^*, J^{-1}(Jx_n - \alpha_n Ax_n - \alpha_n \theta_n Jx_n))\nonumber\\
    & = & V(x^*, Jx_n - \alpha_nAx_n - \alpha_n \theta_nJx_n))\nonumber\\
    & \leq & V(x^*, Jx_n) - 2\langle J^{-1}(Jx_n - \alpha_n Ax_n - \alpha_n \theta_n Jx_n)
     -x^*, \alpha_nAx_n + \alpha_n \theta_nJx_n \rangle \nonumber \\ 
    &=&  \phi(x^*, x_n) - 2\alpha_n\langle x_n - x^*, Ax_n + \theta_n Jx_n \rangle\nonumber\\ 
    & & - 2\alpha_n\langle J^{-1}(Jx_n - \alpha_n Ax_n - \alpha_n \theta_n Jx_n)
     - J^{-1}(Jx_n), Ax_n + \theta_n Jx_n \rangle\nonumber \\
    &\leq&  \phi(x^*, x_n) - 2\alpha_n\langle x_n - x^*, Ax_n + \theta_n Jx_n \rangle \nonumber\\ 
    & & +\; 2\alpha_n \| J^{-1}(Jx_n - \alpha_n Ax_n - \alpha_n \theta_n Jx_n) - J^{-1}(Jx_n) \| \|Ax_n + \theta_n Jx_n \|.\nonumber \\
\end{eqnarray*}
Using the fact that $A$ is monotone, and that $J$ is strongly monotone (see inequality (\ref{2.10})), we have
\begin{eqnarray*}
 \langle x_n - x^*, Ax_n + \theta_n Jx_n \rangle&=&\langle x_n - x^*, Ax_n - Ax^*  \rangle \nonumber\\
 & & + \theta_n \langle x_n - x^*, Jx_n - Jx^*\rangle + \theta_n \langle x_n -x^*, Jx^* \rangle \nonumber \\ 
 &\ge & \theta_n(p-1) \| x_n - x^*\|^2 +\theta_n \langle x_n - x^*, Jx^* \rangle.   
\end{eqnarray*}
Thus we have
\begin{eqnarray*}\label{3.10}
    \phi(x^*, x_{n+1}) 
     & \leq &  \phi(x^*, x_n) - 2\alpha_n\theta_n(p-1)\|x_n - x^*\|^2  -2\alpha_n\theta_n \langle x_n - x^*, Jx^*   \rangle  \nonumber \\
     & &  +  2\alpha_n \|J^{-1}(Jx_n - \alpha_n Ax_n - \alpha_n \theta_n Jx_n) - J^{-1}(Jx_n)\|\|Ax_n + \theta_nJx_n\|\nonumber.\\
\end{eqnarray*}

\noindent Using the fact that $J^{-1}$ is Lipschitz; $\phi(x^*,x_n) \le \|x_n - x^*\|^2$ (see inequality (\ref{2.12}),\\ and the definition of $M_0$ we obtain
\begin{eqnarray*}
   \phi(x^*, x_{n+1}) 
    &\leq & (1-2\alpha_n\theta_n(p-1))\phi(x^*, x_n) - 2\alpha_n\theta_n\langle x_n - x^*, Jx^*   \rangle + 2L\alpha_n \|Ax_n + \theta_nJx_n\|^2 \cr\\
     &\leq & (1-2 \alpha_n \theta_n  (p-1)) \phi(x^*,x_n)-2\alpha_n\theta_n (\|x_n\| + \|x^*\|)\|x^*\| + 2L\alpha_n\alpha_n M_0. \nonumber \\
\end{eqnarray*}
Using inequality (\ref{2.5}), we observe that
\begin{eqnarray*}
 \phi(x^*,x_n) < r \implies (\|x_n\| - \|x^*\|)^2 \le r \implies \|x_n\| \le \|x^*\| + \sqrt{r}.
\end{eqnarray*}
Since $\alpha_n \leq \theta_n\gamma \; \forall \; n \ge 1,$ we have
\begin{eqnarray*}
    \phi(x^*, x_{n+1}) 
     & \leq & (1 - 2\alpha_n\theta_n(p-1))\phi(x^*, x_n) + 2\alpha_n\theta_n(\sqrt{r} + 2\|x^*\|)\|x^*\| + \alpha_n\alpha_nM_0 \nonumber\\
     & \le & (1 - 2\alpha_n\theta_n(p-1))r + 2\alpha_n\theta_n(\sqrt{r} + 2\|x^*\|)\|x^*\| +\alpha_n\theta_n\gamma M_0.\nonumber \\
     &\le&(1 - 2\alpha_n\theta_n(p-1))r + \frac{\alpha_n\theta_n(p-1)r}{2} +\frac{\alpha_n\theta_n(p-1)r}{2} \nonumber\\ 
     &\le& (1 - \alpha_n\theta_n(p-1))r \le r. \nonumber\\
\end{eqnarray*}
Therefore, $\phi(x^*, x_{n+1}) \le r \; \forall \; n \ge 1.$ Thus $\{\phi(x^*, x_n)\}_{n=1}^{\infty}$ is bounded. Inequality (\ref{2.5}) implies that $\{x_n\}_{n=1}^{\infty}$ is bounded.
\end{proof}
\noindent We now give our convergence result. We shall adopt the method used by Bruck in \cite{Bruck}.

\begin{theorem}\label{thm3.2}
Let $E = L_p, 1 < p \le 2$. Let $A:E \to E^*$ be a maximal monotone and bounded map such that $0 \in R(A).$  For arbitrary $x_1 \in E$, define the iterative sequence $\{x_n\}_{n=1}^{\infty}$ by:
\begin{eqnarray}\label{3.3}
x_{n+1} = J^{-1}(Jx_n - \alpha_n Ax_n - \alpha_n \theta_nJx_n), \; n \ge 1,
\end{eqnarray}
where $\{\alpha_n\}_{n=1}^{\infty}$ and $\{\theta_n\}_{n=1}^{\infty}$ are acceptably paired sequences in $(0,1)$ such that there exists $\gamma > 0$ with $\alpha_n \le \theta_n \gamma \; \forall \; n \ge 1.$ Then, the sequence $\{x_n\}_{n=1}^{\infty}$ converges strongly to the unique point $x^* = \Pi_{A^{-1}(0)}0$ of $A^{-1}(0)$ satisfying $\langle Jx^*, x - x^* \rangle \le 0$ for all $x \in A^{-1}(0).$
\end{theorem}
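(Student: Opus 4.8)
The plan is to follow Bruck by comparing the iterates $x_n$ with the auxiliary ``anchor'' sequence $\{y_n\}$ defined in (\ref{2.14}), i.e.\ the unique solutions of $\theta_n Jy_n + Ay_n = 0$. By Lemma \ref{reich2} together with the Remark following it (applicable since for $E=L_p$, $1<p\le2$, the dual $E^*=L_q$ is strictly convex with Fr\'echet differentiable norm), $y_n \to x^*$ strongly, where $x^*$ is exactly the point of $A^{-1}(0)$ singled out in the statement; in particular $\{y_n\}$ is bounded. It then suffices to prove $\|x_n - y_n\| \to 0$, since $\|x_n - x^*\| \le \|x_n - y_n\| + \|y_n - x^*\|$.

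The first step is a one-step Lyapunov estimate for $b_n := \phi(y_n, x_n)$ toward the current anchor. Applying Lemma \ref{alb} exactly as in the proof of Theorem \ref{thm3.1}, but with the fixed zero replaced by $y_n$ (take $x=y_n$ and $y^* = \alpha_n(Ax_n + \theta_n Jx_n)$ so that the shifted dual argument is $Jx_{n+1}$), I obtain
\[
\phi(y_n, x_{n+1}) \le \phi(y_n, x_n) - 2\alpha_n\langle x_n - y_n,\, Ax_n + \theta_n Jx_n\rangle + 2L\alpha_n^2\|Ax_n + \theta_n Jx_n\|^2.
\]
Since $Ay_n = -\theta_n Jy_n$, the inner product equals $\langle x_n - y_n, Ax_n - Ay_n\rangle + \theta_n\langle x_n - y_n, Jx_n - Jy_n\rangle$, which by monotonicity of $A$ and the strong monotonicity (\ref{2.10}) of $J$ is $\ge \theta_n(p-1)\|x_n - y_n\|^2 \ge \theta_n(p-1)\phi(y_n, x_n)$ by (\ref{2.12}). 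Boundedness of $\{x_n\}$ (Theorem \ref{thm3.1}) and of $A$ bounds the last term by $C\alpha_n^2$, giving $\phi(y_n, x_{n+1}) \le (1 - 2(p-1)\alpha_n\theta_n)\phi(y_n, x_n) + C\alpha_n^2$.

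Next I would control the drift of the anchor. Testing the monotonicity of $A$ against $y_n - y_{n+1}$ and using (\ref{2.10}) yields $\|y_{n+1} - y_n\| \le \tfrac{\theta_n - \theta_{n+1}}{(p-1)\theta_n}\|y_{n+1}\| \le C'\tfrac{\theta_n - \theta_{n+1}}{\theta_n}$; expanding $\phi(y_{n+1}, x_{n+1}) - \phi(y_n, x_{n+1})$ and using boundedness then gives $b_{n+1} \le \phi(y_n, x_{n+1}) + C''\tfrac{\theta_n - \theta_{n+1}}{\theta_n}$, hence $b_{n+1} \le (1 - 2(p-1)\alpha_n\theta_n) b_n + C\alpha_n^2 + C''\tfrac{\theta_n - \theta_{n+1}}{\theta_n}$. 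This is where the acceptable-pairing hypotheses must do the work, and it is the part I expect to be the main obstacle. Iterating over a block $[n(i), n(i+1)]$, the accumulated square-error $\sum \alpha_j^2 \to 0$ by (\ref{2.1}). Dividing (\ref{2.3}) by the strictly positive quantity in (\ref{2.2}) first forces $\tfrac{\theta_{n(i)}-\theta_{n(i+1)}}{\theta_{n(i)}}\to 0$, i.e.\ $\theta_{n(i+1)}/\theta_{n(i)} \to 1$; this simultaneously makes the block drift sum $\sum_j \tfrac{\theta_j - \theta_{j+1}}{\theta_j} \le \tfrac{\theta_{n(i)} - \theta_{n(i+1)}}{\theta_{n(i+1)}} \to 0$ and keeps the block contraction factor $\prod_j(1 - 2(p-1)\alpha_j\theta_j) \le \exp\!\big(-2(p-1)\theta_{n(i+1)}\sum_j\alpha_j\big)$ below a fixed $q<1$ via (\ref{2.2}). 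Thus $b_{n(i+1)} \le q\, b_{n(i)} + \epsilon_i$ with $\epsilon_i \to 0$, which forces $b_{n(i)} \to 0$.

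Finally, the same one-step inequality shows $b_n$ grows by at most $\epsilon_i \to 0$ across a block, so $b_n = \phi(y_n, x_n) \to 0$ for every $n$, not merely along $\{n(i)\}$. Since $\{y_n\}$ is bounded, Lemma \ref{katak} gives $\|x_n - y_n\| \to 0$, and therefore $x_n \to x^*$; uniqueness of $x^*$ and its variational characterization are furnished directly by Lemma \ref{reich2}. The genuine difficulty is the bookkeeping in the block argument --- extracting $\theta_{n(i+1)}/\theta_{n(i)} \to 1$ and balancing the per-block contraction against the accumulated square-error and anchor-drift terms --- which is precisely the purpose of the three acceptable-pairing conditions (\ref{2.1})--(\ref{2.3}).
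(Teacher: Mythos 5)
Your proposal is correct and follows the same Bruck-style skeleton as the paper --- the regularization path $\{y_n\}$ of \eqref{2.14} converging to $x^*$ via Lemma \ref{reich2}, the one-step estimate from Lemma \ref{alb}, monotonicity of $A$ combined with the strong monotonicity \eqref{2.10} of $J$, block-wise contraction driven by \eqref{2.1}--\eqref{2.3}, and Lemma \ref{katak} at the end --- but your decomposition is genuinely different. The paper never compares $x_n$ with the moving anchor $y_n$: inside a block $[n(k),n(k+1)]$ it freezes the anchor at $y_{n(k)}$ and tracks $\phi(y_{n(k)},x_n)$, so its drift term comes from the parameter mismatch $2\alpha_n(\theta_{n(k)}-\theta_n)\langle x_n-y_{n(k)},Jx_n\rangle$ inside the inner product, condition \eqref{2.3} is consumed exactly in the form $(\theta_{n(k)}-\theta_{n(k+1)})\sum_j\alpha_j\to0$, and no bound on $\|y_{n+1}-y_n\|$ is ever needed. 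You instead track the diagonal quantity $b_n=\phi(y_n,x_n)$, which obliges you to prove the path-drift estimate $\|y_{n+1}-y_n\|\le\frac{\theta_n-\theta_{n+1}}{(p-1)\theta_n}\|y_{n+1}\|$ (your derivation via monotonicity and \eqref{2.10} is correct, and this lemma appears nowhere in the paper) and to use the pairing conditions in ratio form $(\theta_{n(i)}-\theta_{n(i+1)})/\theta_{n(i)}\to0$. Your route buys a cleaner scalar recursion $b_{n+1}\le(1-2(p-1)\alpha_n\theta_n)b_n+\epsilon_n$ and a transparent accounting of what each of \eqref{2.1}--\eqref{2.3} does; the paper's route avoids the extra drift lemma and the division trick, at the cost of two-index bookkeeping and the limsup identification $\limsup_k\phi(y_{n(k)},x_{n(k+1)})=\limsup_k\phi(y_{n(k)},x_{n(k)})$ (via $y_{n(k)}\to x^*$), which plays exactly the role of your fixed-point step $S\le qS\Rightarrow S=0$.

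One caveat, which you share with the paper rather than introduce: \eqref{2.2} and \eqref{2.3} are stated with $\liminf$, so dividing \eqref{2.3} by \eqref{2.2} literally yields only $\liminf\,(\theta_{n(i)}-\theta_{n(i+1)})/\theta_{n(i)}=0$, i.e.\ vanishing drift along a subsequence of blocks, whereas your recursion $b_{n(i+1)}\le q\,b_{n(i)}+\epsilon_i$ needs $\epsilon_i\to0$ along all blocks to force $b_{n(i)}\to0$. The paper's inequality \eqref{3.7} requires the very same strengthening, so both proofs silently read \eqref{2.3} as a full limit (as the prototype sequences \eqref{seq} indeed satisfy); this is an imprecision inherited from the paper, not a gap peculiar to your argument.
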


\begin{proof}
Let $n \ge i \ge 1.$ Using the definition of $x_{n+1}$ and $y_n$ in (\ref{3.3}) and (\ref{2.14}), respectively, and lemma \ref{alb} with $y^* = \alpha_n Ax_n + \alpha_n \theta_nJx_n$, we have 
\begin{eqnarray*}
\phi(y_{i+1}, x_{n+1}) &=& \phi(y_{i+1}, J^{-1}(Jx_n - \alpha_n Ax_n - \alpha_n \theta_nJx_n)) \nonumber \\
&=& V(y_{i+1}, Jx_n - \alpha_n Ax_n - \alpha_n \theta_nJx_n)) \nonumber \\
&\le& V(y_{i+1}, Jx_n) - 2\langle J^{-1}(Jx_n - \alpha_n Ax_n - \alpha_n \theta_nJx_n) - y_{i+1}, \alpha_n Ax_n + \alpha_n \theta_nJx_n \rangle \nonumber \\
&=& \phi(y_{i+1}, x_n) -2\alpha_n \langle x_n - y_{i+1}, Ax_n + \theta_nJx_n \rangle \nonumber\\
& & -2\alpha_n \langle J^{-1}(Jx_n - \alpha_n Ax_n - \alpha_n \theta_nJx_n) - J^{-1}(Jx_n), Ax_n + \theta_nJx_n \rangle \nonumber\\
&\le& \phi(y_{i+1}, x_n) -2\alpha_n \langle x_n - y_{i+1}, Ax_n + \theta_{i+1}Jx_n \rangle \nonumber\\
&& +2\alpha_n(\theta_{i+1} - \theta_n)\|x_n - y_{i+1}\|\|x_n\| \nonumber\\
& & +2\alpha_n \|J^{-1}(Jx_n), Ax_n + \theta_nJx_n) - J^{-1}(Jx_n) \|\|Ax_n + \theta_nJx_n \|\nonumber\\
&\le& \phi(y_{i+1}, x_n) -2\alpha_n \langle x_n - y_{i+1}, Ax_n + \theta_{i+1}Jx_n \rangle \nonumber\\
&& +\alpha_n(\theta_{i+1} - \theta_n)M_1 + \alpha_n^2M_0, 
\end{eqnarray*}
for some $M_1 > 0.$ Using equation (\ref{2.14}), we have $Ay_{i+1} = -\theta_{i+1}Jy_{i+1}.$ Thus by the monotonicity of $A$, we have 
$$\langle x_n - y_{i+1}, Ax_n + \theta_{i+1}Jy_{i+1} \rangle \ge 0.$$ 
Using also the fact that $J$ is strongly monotone (see Lemma \ref{reich}), we have
\begin{eqnarray*}
\langle x_n - y_{i+1}, Ax_n + \theta_{i+1}Jx_n \rangle &=&  
\langle x_n - y_{i+1}, Ax_n + \theta_{i+1}Jy_{i+1} \rangle \nonumber\\
& & + \langle x_n - y_{i+1}, \theta_{i+1}Jx_n - \theta_{i+1}Jy_{i+1} \rangle \nonumber\\
& \ge& \theta_{i+1}(p-1)\|x_n -y_{i+1} \|^2.\nonumber
\end{eqnarray*}
Thus, we have
\begin{eqnarray*}
\phi(y_{i+1}, x_{n+1}) & \le& \phi(y_{i+1}, x_n) - 2\alpha_n\theta_{i+1}(p-1)\|x_n - y_{i+1}\|^2 \nonumber\\
& & + \alpha_n(\theta_{i+1} - \theta_n)M_1 + \alpha_n^2M_0.
\end{eqnarray*}
Hence,
\begin{eqnarray}\label{3.4}
\phi(y_{i+1}, x_{n+1}) & \le& e^{-2\alpha_n\theta_{i+1}(p-1)}\phi(y_{i+1}, x_{n})\nonumber\\
&& +\alpha_n(\theta_{i+1} - \theta_n)M_1 + \alpha_n^2M_0.
\end{eqnarray}
Here, we have used the fact that $(1-2\alpha_n \theta_{i+1}(p-1)) \le e^{-2\alpha_n\theta_{i+1}(p-1)}.$\\
Applying induction to (\ref{3.4}) and observing that $e^{-2\alpha_n\theta_{i+1}(p-1)} \le 1$, we obtain
\begin{eqnarray}\label{3.5}
\phi(y_{i+1}, x_{n+1}) & \le& e^{-2\alpha_n\theta_{i+1}(p-1)}\sum_{j = i+1}^{n}\alpha_j\phi(y_{i+1}, x_{i+1})\nonumber\\
&& + M_1\sum_{j = i+1}^{n}\alpha_j(\theta_{i+1} - \theta_j) + M_0 \sum_{j = i+1}^{n}\alpha_j^2.
\end{eqnarray}
Since $\{\theta_n\}$ is decreasing we have $\theta_{i+1} -\theta_j \le \theta_{i+1} - \theta_{n+1}$ for $j \le n,$ so that 
\begin{eqnarray}\label{3.6}
\phi(y_{i+1}, x_{n+1}) & \le& e^{-2\theta_{i+1}(p-1)}\sum_{j = i+1}^{n}\alpha_j\phi(y_{i+1}, x_{i+1})\nonumber\\
&& + M_1(\theta_{i+1} - \theta_{n+1})\sum_{j = i+1}^{n}\alpha_j + M_0 \sum_{j = i+1}^{n}\alpha_j^2.
\end{eqnarray}
We now use (\ref{3.6}) to first prove that $\underset{k\to\infty}{lim}x_{n(k)} = x^*$ (where $x^*$ is the limit of the sequence $\{y_n\}$ satisfying (\ref{2.14})). Indeed, taking $i=n(k) - 1$ and $n = n(k+1)-1$ in (\ref{3.6}) we have
\begin{eqnarray*}
\phi(y_{n(k)}, x_{k+1}) & \le& e^{-2\theta_{n(k)}(p-1)}\sum_{j = n(k)}^{n(k+1)}\alpha_j\phi(y_{n(k)}, x_{n(k)})\cdot e^{2\theta_{n(k)}\alpha_{n(k+1)}(p-1)}\nonumber\\
&& + M_1(\theta_{n(k)} - \theta_{n(k+1)})\sum_{j = n(k)}^{n(k+1)}\alpha_j + M_0 \sum_{j = n(k)}^{n(k+1)}\alpha_j^2.
\end{eqnarray*}

\noindent Since $\underset{n\to \infty}{lim}\alpha_n = 0,$ we have $\underset{k\to\infty}{lim}e^{2\theta_{n(k)}\alpha_{n(k+1)}(p-1)}=1.$ (\ref{2.1}), (\ref{2.2}), (\ref{2.3}), and  (\ref{3.6}) imply the existence of $\gamma \in (0,1)$ such that 
\begin{equation}\label{3.7}
    \underset{k\to\infty}{lim sup}\; \phi(y_{n(k)}, x_{k+1}) \le \gamma \cdot \underset{k\to\infty}{lim sup}\; \phi(y_{n(k)}, x_{n(k)}). 
\end{equation}
The fact that $\underset{k \to \infty}{lim} y_{n(k)} = x^*$ implies
$$\underset{k\to \infty}{limsup}\;\phi(y_{n(k)}, x_{n(k+1)}) = \underset{k\to\infty}{lim sup}\; \phi(x^*, x_{n(k)}) = \underset{k\to\infty}{lim sup}\; \phi(y_{n(k)}, x_{n(k)}). $$ 
Inequality (\ref{3.7}) together with the fact that $\gamma < 1$ implies that $\underset{k\to\infty}{lim}\phi(x^*, x_{n(k)}) = 0.$  Applying Lemma \ref{katak} we see that $\underset{k\to\infty}{lim}x_{n(k)} = x^*$.\\
Now, for any $n \in \mathbb{N}$ such that $n + 1 > n(1)$ choose $k$ with $n(k) \le n + 1 < n(k+1).$ Thus, taking $i = n(k) - 1$ in (\ref{3.6}) and observing that $e^{-2\alpha_n\theta_{i+1}(p-1)} \le 1$  we have 
\begin{eqnarray*}
\phi(y_{n(k)}, x_{n+1}) \le \phi(y_{n(k)}, x_{n(k)}) + M_1(\theta_{n(k)} - \theta_{n(k)})\sum_{j = n(k)}^{n(k+1}\alpha_j + M_0 \sum_{j = n(k)}^{n(k+1)}\alpha_j^2. 
\end{eqnarray*}

\noindent Since $\underset{k\to\infty}{lim}x_{n(k)} = \underset{k\to\infty}{lim}y_{n(k)},$ then (\ref{2.2}), (\ref{2.3}) together with the last inequality imply $\phi(y_{n(k)}, x_{n+1}) \to 0$ as $n, k \to \infty.$ Lemma (\ref{katak}) implies that $\|y_{n(k)} - x_{n+1}\| \to 0$ as $n, k \to \infty.$ Therefore, $x_n \to x^*.$ The proof is complete.
\end{proof}
\begin{remark}
It is well known that in a real Hilbert space $H$, the function $\phi (x,y) $ reduces to $\|x - y\|^2.$ Thus, the following corollary is immediate.
\end{remark}

\begin{corollary}\label{cor3.3}
Let $H$ be a real Hilbert space and $A: H \to H$ be a maximal monotone and bounded map such that $0 \in R(A).$ For arbitrary $x_1 \in H,$ define the iterative sequence $\{x_n\}_{n=1}^{\infty}$ by:
\begin{equation}\label{3.8}
    x_{n+1} = x_n - \alpha_nAx_n - \alpha_n\theta_nx_n, n \ge 1,
\end{equation}
where $\{\alpha_n\}_{n=1}^{\infty}$ and $\{\theta_n\}_{n=1}^{\infty}$ are acceptably paired sequences in $(0,1)$ such that there exists $\gamma > 0$ with $\alpha_n \le \theta_n \gamma, \; \forall \; n \ge 1.$ Then, the sequence $\{x_n\}_{n=1}^{\infty}$ converges strongly to the unique point of $A^{-1}(0)$ closest to $0$.
\end{corollary}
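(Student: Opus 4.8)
The plan is to obtain Corollary~\ref{cor3.3} as the specialization of Theorem~\ref{thm3.2} to the Hilbert-space setting, which is precisely the endpoint $p = 2$. The first step is to record the structural simplifications that occur when $E = H$. Since a Hilbert space is its own dual and the normalized duality map $J$ is the identity, we have $E^* = E = H$ and $J = J^{-1} = I$. Consequently the recursion \eqref{3.3} collapses to
$$
x_{n+1} = J^{-1}\bigl(Jx_n - \alpha_n Ax_n - \alpha_n\theta_n Jx_n\bigr) = x_n - \alpha_n Ax_n - \alpha_n\theta_n x_n,
$$
which is exactly \eqref{3.8}. Moreover, as noted in the preceding remark, the Lyapunov functional reduces to $\phi(x,y) = \|x\|^2 - 2\langle x, y\rangle + \|y\|^2 = \|x-y\|^2$, and by Lemma~\ref{katak} the convergence $\phi(x^*, x_n) \to 0$ is then equivalent to $\|x_n - x^*\| \to 0$.

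The second step is to verify that the geometric inputs driving the proof of Theorem~\ref{thm3.2} survive at $p = 2$. Inequality \eqref{2.10} becomes $\langle x-y, x-y\rangle \ge (p-1)\|x-y\|^2$, which holds with equality when $p = 2$; thus $J = I$ is strongly monotone with constant $p-1 = 1$ and its inverse is Lipschitz with $L = 1/(p-1) = 1$. Since $H$ is a reflexive, strictly convex, smooth Banach space in which the estimates of Lemma~\ref{chid} hold with $p = 2$ (indeed every real Hilbert space is isometrically isomorphic to an $L_2$ space via any orthonormal basis), every ingredient used in the proof of Theorem~\ref{thm3.2}---Lemma~\ref{alb}, the monotonicity of $A$, the boundedness bookkeeping with $M_0, M_1$, Lemma~\ref{reich2}, and the acceptably-paired machinery---carries over verbatim. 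Hence the hypotheses of Theorem~\ref{thm3.2} are met and its conclusion applies: the sequence $\{x_n\}$ generated by \eqref{3.8} converges strongly to $x^* = \Pi_{A^{-1}(0)}0$.

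The final step, and the only point requiring genuine (though routine) verification, is to identify $\Pi_{A^{-1}(0)}0$ with the point of $A^{-1}(0)$ closest to $0$. Here I would use that, when $\phi(x,y) = \|x-y\|^2$, the generalized projection $\Pi_{A^{-1}(0)}$ coincides with the ordinary metric projection onto $A^{-1}(0)$, so that $\Pi_{A^{-1}(0)}0$ is by definition the minimizer of $\phi(0,y) = \|y\|^2$ over $y \in A^{-1}(0)$, i.e. the nearest point to the origin; this is consistent with the characterization of $R0$ in Lemma~\ref{reich2} upon setting $z = 0$ and $J = I$. The standard facts that $A^{-1}(0)$ is closed and convex (as the zero set of a maximal monotone operator) and that $H$ is strictly convex guarantee that this nearest point exists and is unique. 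I expect no serious obstacle: the corollary is essentially a dictionary translation of Theorem~\ref{thm3.2}, the only care needed being the well-definedness of the metric projection just noted.
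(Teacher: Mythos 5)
Your proposal is correct and follows essentially the same route as the paper: the paper derives Corollary~\ref{cor3.3} directly from Theorem~\ref{thm3.2} via the remark that in a Hilbert space $\phi(x,y)=\|x-y\|^2$ (and $J=I$), so that the recursion \eqref{3.3} collapses to \eqref{3.8} and the generalized projection $\Pi_{A^{-1}(0)}0$ becomes the metric projection of $0$ onto $A^{-1}(0)$. Your write-up merely makes explicit the verifications (the $p=2$ constants, the identification of $H$ with an $L_2$ space, closedness and convexity of $A^{-1}(0)$) that the paper leaves implicit in calling the corollary ``immediate.''
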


\begin{remark}
All the results presented in this paper are for single valued operators. The same method can be adopted for multivalued operators. Thus, if the operator in theorem 1 of \cite{Bruck} is single valued (with z = 0 in the recursion formula), then the theorem coincides with corollary \ref{cor3.3}.
\end{remark}
\begin{remark}
Our results hold in $L_p$ spaces, $1 < p \le 2$. Most of the properties of the normalized duality mapping, $J$ and also the geometric properties of $L_p$ spaces vary with the range of $p$. For instance, $J^{-1}$ is Lipschitz in $L_p$ spaces, $1<p \le 2$ while it is only H\'{o}lder  continuous when $p$ lies between $2$ and $\infty$ (see e.g., \cite{yak} and \cite{cec2}, p. 55). With some modifications, our result can also be proved in $L_p$ spaces, $2 \le p < \infty.$ However, we do not carry out this extension. It is also well known that if meas$(\Omega) < \infty$, then for $p \le q, \; L_q(\Omega) \subset{L_p(\Omega)}.$
\end{remark}

\begin{remark}
The recursion formula in Theorem \ref{thm3.2} is one-step and does not involve computation of sets involving generalized projections. Moreover, $L_p$ spaces are the largest spaces for which the precise value of the duality mapping is known. In fact, if $E = L_p, \; 1<p<\infty$ and $J : E \to E^*$ is the normalized duality map, then for each $f \in L_p$, 
$$J(f) = \|f\|^{p-1}\cdot sign \frac{f}{\|f\|^{p-1}}.$$
\end{remark}
\noindent A prototype for the parameters used in our theorems are
\begin{equation}\label{seq}
\alpha_n = \frac{1}{n} \;\; and \;\; \theta_n = \frac{1}{loglog n},
\end{equation}
with $n(i) = i^i.$ See \cite{Bruck} for more details and examples of acceptably paired sequences.

%%%%%%%%%%%%%%%%%%%%%%%%%%%%%%%%%%%%%%%%%%%%%%%%%%%%%%%%%%%%%%%%%%%%%%%%%%%%%%%%%%%%%%%%%%%%%%%%%%%%%%%%%%%%%%%%%%%%%%%%%%%%%%%%%%%%%%%%%%%%%%%%%%%%%%%%%%%%%%%%%%%%%%%%%%%%%%%%%%%%%%%%%%%%%%%%%%%%%%%%%%%
\section{Applications}
\subsection{Application to Hammerstein integral equations}
Let $\Omega$ be a measurable and bounded subset of ${\mathbb{R}}^n$. A nonlinear integral equation of Hammerstein  type is of the form
\begin{equation}\label{Hamm4}
u(x)+\int_\Omega k(x,y)f(y,u(y))dy=w(x),
\end{equation}
where $dy$ is a $\sigma$-finite measure. The function $k:\Omega \times \Omega \rightarrow \mathbb{R}$ is the kernel of ($\ref{Hamm4}$), and $f:\Omega \times \mathbb{R} \rightarrow \mathbb{R}$ is a measurable real-valued function. The function $w$ and the unknown function $u$ lie in a suitable Banach space of measurable real-valued functions, namely, $\mathcal{F}(\Omega, \mathbb{R})$. Suppose we define the operators $F:\mathcal{F}(\Omega, \mathbb{R})\rightarrow \mathcal{F}(\Omega, \mathbb{R})$ and $K:\mathcal{F}(\Omega, \mathbb{R})\rightarrow \mathcal{F}(\Omega, \mathbb{R})$ by 
\begin{equation}\label{Hm}
Fu(x)=f(x,u(x))\quad\textup{and}\quad Kv(x)=\int_\Omega k(x,y)v(y)dy,
\end{equation}
where $x\in \Omega.$ Then, ($\ref{Hamm4}$) can be re-written in an abstract Hammerstein equation as
\begin{equation}\label{ham6}
u+KFu=0,
\end{equation}
where, without loss of generality, $w$ is the zero map in $\mathcal{F}(\Omega, \mathbb{R})$. 

\vskip0.2truecm \noindent
Interest in Hammerstein integral equations arises primarily from the fact that various problems that originate in the form of differential equations, for example, elliptic
boundary value problems whose linear parts posses Green's functions can be transformed into the form (\ref{Hamm4}) 
(see, for instance, \cite[Chapter IV, p. 164]{pas}). Moreover, the Hammerstein equation also plays essential role in the theory of optimal control systems, automation, and network theory. For instance, in the study of automatic control using a Hammerstein model, authors in \cite{ng} suggested an iterative method for the identification of nonlinear systems, for samples of inputs and outputs in the presence of noise. See, e.g., Dolezale \cite{dole} for more details on problems in optimal control, automation and network systems that can be modeled as Hammerstein equations.
\vskip0.2truecm \noindent
Several existence and uniqueness theorems have been proved for equations of the Hammerstein type (see, for example, \cite{f9, f16}). Until now, no method, which finds the closed form solutions to these nonlinear equations, is known. Thus, iterative algorithms which estimate these solutions are of great interest 
(see e.g., \cite{f38, f31, f19, mm, uba, ubaetal, ubaonyidopun} and also Chapter 13 of \cite{cec2}).
\vskip0.2truecm
\noindent Here, we shall apply Theorem \ref{thm3.2} to approximate a solution of problem (\ref{ham6}). The following lemma and remark would be needed in what follows.
\begin{lemma}[\cite{BrMax}]\label{BrMax}
	Let $X$ be a strictly convex reflexive Banach space with a strictly convex conjugate space $X^*$, $T_1$ a maximal monotone mapping from $X$ to $X^*$, $T_2$ a hemicontinuous monotone mapping of all of $X$ into $X^*$ which carries bounded subsets of $X$ into bounded subsets of $X^*$.
	Then, the mapping $T=T_1+T_2$ is a maximal monotone mapping of $X$ into $X^*$.
\end{lemma}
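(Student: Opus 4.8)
The plan is to verify the two properties that, in the terminology adopted in this paper, constitute maximal monotonicity of $T := T_1 + T_2$: that $T$ is monotone, and that $R(J + \lambda T) = X^*$ for every $\lambda \ge 0$. Monotonicity is immediate and uses nothing beyond the standing hypotheses: for $x, y \in X$ and any $u_1 \in T_1 x$, $v_1 \in T_1 y$,
\begin{equation*}
\langle (u_1 + T_2 x) - (v_1 + T_2 y),\, x - y \rangle = \langle u_1 - v_1,\, x - y \rangle + \langle T_2 x - T_2 y,\, x - y \rangle \ge 0,
\end{equation*}
since each summand is monotone. Hence the entire content of the lemma is the range (surjectivity) condition, and it suffices to treat a fixed $\lambda > 0$, the case $\lambda = 0$ reducing to the bijectivity of $J$.

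Fix $\lambda > 0$ and $w^* \in X^*$; I want $x \in X$ with $w^* \in Jx + \lambda T_1 x + \lambda T_2 x$. Because $X$ is reflexive and both $X$ and $X^*$ are strictly convex, the normalized duality map $J$ is single-valued, bijective, demicontinuous, and strictly monotone; consequently $J + \lambda T_1$ is strictly monotone, and since $T_1$ is maximal monotone its defining range condition makes $J + \lambda T_1$ a bijection of $X$ onto $X^*$ (surjectivity from maximality of $T_1$, injectivity from strict monotonicity of $J$). Writing $R_\lambda := (J + \lambda T_1)^{-1}$ for the resulting single-valued resolvent, the inclusion to be solved is equivalent to the fixed-point equation
\begin{equation*}
x = R_\lambda\big(w^* - \lambda T_2 x\big),
\end{equation*}
because $w^* - \lambda T_2 x \in (J + \lambda T_1)x$ precisely when $x$ is such a fixed point. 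Here the single-valuedness of $T_2$ (it is defined on all of $X$) is exactly what makes the right-hand side a genuine map $X \to X$.

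To produce a fixed point I would approximate: pass to finite-dimensional subspaces $X_m \uparrow X$, solve the projected equations by a Brouwer/degree argument after an a priori bound (the boundedness of $T_2$ on bounded sets, together with the coercivity $\langle Jx, x\rangle = \|x\|^2$ supplied by $J$, confines the approximants to a fixed ball), and then extract a weakly convergent subsequence by reflexivity. The main obstacle is the passage to the limit: identifying the weak limit as an honest solution of $w^* \in Jx + \lambda T_1 x + \lambda T_2 x$ cannot rely on strong continuity, since $T_1$ is only maximal monotone and $T_2$ only hemicontinuous. This is resolved by the Minty trick, exploiting the demiclosedness of the graph of the maximal monotone operator $J + \lambda T_1$ in the strong$\times$weak topology together with the hemicontinuity of $T_2$, testing the limiting monotonicity inequality against rays $x \pm t z$ and letting $t \downarrow 0$. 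An equally valid route, avoiding the explicit approximation, is to first note that a monotone hemicontinuous map defined on all of $X$ is itself maximal monotone (Minty's lemma) and then invoke the sum theorem for maximal monotone operators, whose domain qualification $D(T_1) \cap \mathrm{int}\,D(T_2) = D(T_1) \cap X \ne \emptyset$ holds trivially here; either way the surjectivity of $J + \lambda T$ follows and the proof is complete.
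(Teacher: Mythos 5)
First, a point of calibration: the paper contains no proof of Lemma \ref{BrMax} at all. It is Browder's 1967 perturbation theorem, imported verbatim with the citation \cite{BrMax} and used as a known result in Section 4 (it is what ultimately underwrites the maximality claim of Lemma \ref{Bs}). So there is no internal argument to compare your attempt against; it can only be judged against the standard literature.

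Judged that way, your proposal is correct in one of its two branches. The monotonicity of $T_1+T_2$ is indeed trivial, and your second route is a complete, legitimate derivation: $T_2$ is maximal monotone because it is monotone, hemicontinuous, and defined on all of $X$ (the Minty--Browder theorem; see also \cite{pas}), and Rockafellar's sum theorem applies because the qualification $D(T_1)\cap\mathrm{int}\,D(T_2)=D(T_1)\neq\emptyset$ is automatic in this setting. Its only peculiarity is one of logical economy: you deduce Browder's 1967 lemma from Rockafellar's 1970 sum theorem, a strictly stronger and historically later result. That is not circular (Rockafellar's proof does not pass through this lemma), but it is proof by a much bigger hammer, which is worth acknowledging.

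Your first route, by contrast, has a genuine gap exactly where the substance of the theorem lies. The reformulation $x=R_\lambda(w^*-\lambda T_2x)$ is fine, but the map on the right has no continuity you can feed into a Brouwer or degree argument: $T_2$ is at best demicontinuous (strong-to-weak continuous, using its boundedness), $R_\lambda=(J+\lambda T_1)^{-1}$ is likewise only demicontinuous, and a composition of strong-to-weak continuous maps need not be continuous in any usable sense. Moreover, ``the projected equations'' are not defined: $R_\lambda$ does not map a finite-dimensional subspace into itself and there is no canonical projection onto $X_m$ in a Banach space, while a Galerkin scheme written directly on the inclusion would have to make sense of the multivalued, only densely defined $T_1$ restricted to $X_m$, which it need not even meet. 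The classical executions (including Browder's own) insert a regularization layer you omit: replace $T_1$ by an everywhere-defined, demicontinuous, bounded approximation (e.g., a Yosida-type approximant), solve the regularized, everywhere-defined, coercive problem by the Browder--Minty surjectivity theorem, and only then pass to the limit using Minty's trick together with the maximality of $T_1$. Compressing all of this into ``resolved by the Minty trick'' names the right tool but does not do the work. A final minor point: you prove the paper's range condition $R(J+\lambda T)=X^*$, whereas \cite{BrMax} asserts maximality in the graph sense; the two are equivalent in reflexive spaces, but that equivalence (Rockafellar's characterization) is itself a nontrivial theorem you are implicitly invoking in both directions.
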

\begin{lemma}[\cite{uba}]\label{Bs}
	Let $X$ be a uniformly convex and uniformly smooth real Banach space with dual space $X^*$ and $E=X\times X^*$. Let $F:X\to X^*$ and $K:X^*\to X$ be monotone mappings. Let $ A:E\to E^*$ be defined by $A([u,v])=[Fu-v,Kv+u]$. Then, $A$ is maximal monotone.
\end{lemma}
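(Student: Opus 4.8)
The plan is to proceed in two stages: first verify that $A$ is monotone by a direct computation on the product space, and then upgrade monotonicity to maximality by writing $A$ as a sum of two operators to which the Browder sum theorem (Lemma~\ref{BrMax}) applies. I would begin by recording the structure of the dual space. Since $X$ is uniformly convex and uniformly smooth it is reflexive, so $E=X\times X^{*}$ is reflexive with $E^{*}=X^{*}\times X$, and for $w=[u,v]\in E$ and $w^{*}=[p,q]\in E^{*}$ the pairing is
\begin{equation*}
\langle w, w^{*}\rangle = \langle u,p\rangle + \langle q,v\rangle,
\end{equation*}
both terms being pairings between $X$ and $X^{*}$. With this convention $A[u,v]=[Fu-v,\,Kv+u]$ indeed lands in $E^{*}$, since $Fu-v\in X^{*}$ and $Kv+u\in X$.

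For monotonicity I would compute, for $w_i=[u_i,v_i]$, $i=1,2$, the quantity $\langle w_1-w_2,\,Aw_1-Aw_2\rangle$. Expanding the pairing, the two cross terms $-\langle u_1-u_2,v_1-v_2\rangle$ and $+\langle u_1-u_2,v_1-v_2\rangle$ — which are precisely the contributions of the $-v$ and $+u$ entries of $A$ — cancel, leaving
\begin{equation*}
\langle w_1-w_2,\,Aw_1-Aw_2\rangle = \langle u_1-u_2,\, Fu_1-Fu_2\rangle + \langle Kv_1-Kv_2,\, v_1-v_2\rangle \ge 0,
\end{equation*}
where nonnegativity is immediate from the monotonicity of $F$ and of $K$ separately. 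This is the routine part of the argument.

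For maximality I would split $A = S + D$, where $S[u,v]=[-v,u]$ is the skew part and $D[u,v]=[Fu,Kv]$ is the diagonal part; clearly $S+D$ reproduces $A$ exactly. The operator $S$ is bounded, linear and everywhere defined, and the cancellation above shows $\langle w,Sw\rangle=0$, so $S$ is monotone; a bounded linear everywhere-defined monotone operator on a reflexive space is hemicontinuous and hence maximal monotone. Its monotonicity inequality reduces, exactly as above, to the sum of those for $F$ and $K$, so $D$ is monotone as well, and $D$ is hemicontinuous and carries bounded subsets of $E$ into bounded subsets of $E^{*}$ precisely when $F$ and $K$ enjoy these properties. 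Taking $S$ as the maximal monotone summand $T_1$ and $D$ as the summand $T_2$, Lemma~\ref{BrMax} yields that $A=S+D$ is maximal monotone.

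The hard part will be matching the regularity hypotheses of Lemma~\ref{BrMax} to what is actually assumed about $F$ and $K$. As stated, ``monotone'' alone does not make a single-valued everywhere-defined map maximal monotone, so to place $D$ in the role of $T_2$ one needs $F$ and $K$ to be hemicontinuous and to send bounded sets to bounded sets; alternatively, if $F$ and $K$ are maximal monotone one reverses the assignment, taking $D$ (a product of maximal monotone operators acting in separate coordinates, hence itself maximal monotone) as $T_1$ and the bounded linear $S$ as $T_2$. Either assignment closes the argument, so the substantive point is only to pin down the standing regularity hypotheses on $F$ and $K$ — which are natural in the Hammerstein setting — after which the decomposition $A=S+D$ makes the sum theorem directly applicable.
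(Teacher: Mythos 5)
Your proposal is correct and follows exactly the route the paper intends: the paper does not actually prove Lemma \ref{Bs} (it is quoted from \cite{uba}), but it states Browder's sum theorem (Lemma \ref{BrMax}) immediately beforehand precisely for this purpose, and your decomposition $A=S+D$ into the skew linear part $S[u,v]=[-v,u]$ and the diagonal part $D[u,v]=[Fu,Kv]$, together with the cross-term cancellation for monotonicity, is the standard argument used in that source. Your closing caveat is not a defect of your proof but a genuine imprecision in the statement itself: with $F$ and $K$ merely monotone the conclusion is false. For instance, take $X=\mathbb{R}$, $K=0$, and $F$ the monotone step function $F(u)=0$ for $u<0$, $F(u)=1$ for $u\ge 0$; for $w_0=[0,v_0]$ and $w_0^*=[t-v_0,0]$ with $t\in(0,1)$ the pairing collapses to
\begin{equation*}
\bigl\langle [u,v]-w_0,\,A[u,v]-w_0^*\bigr\rangle = u\bigl(F(u)-t\bigr)\ge 0 \quad\text{for all }[u,v],
\end{equation*}
so $(w_0,w_0^*)$ is monotonically related to the graph of $A$ without belonging to it, and $A$ is not maximal. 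The operative hypotheses --- in \cite{uba} and in the paper's own application (Theorems \ref{hg} and \ref{thvm3.2}) --- are that $F$ and $K$ are maximal monotone (and bounded), and under these your second assignment is the right one: $D$ is maximal monotone as a coordinatewise product of maximal monotone maps (e.g.\ via Minty surjectivity, since $J_E[u,v]=[J_Xu,J_{X^*}v]$ makes $R(J_E+D)=R(J_X+F)\times R(J_{X^*}+K)=E^*$), while $S$ is monotone, linear, bounded and everywhere defined, hence hemicontinuous and bounded-set-preserving, so Lemma \ref{BrMax} applies to $T_1=D$, $T_2=S$ and closes the argument.
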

\begin{lemma} \label{normstar}\cite{uba}
  Let $X, X^*$ be uniformly convex and uniformly smooth real Banach spaces. Let
$E = X \times X^*$ with the norm $\|z\|_E = (\|u\|^2 + \|v\|^2)^{\frac{1}{2}}$
 , for any $z = [u, v] \in E$. Let $E^* = X^*\times X$
denote the dual space of E. For arbitrary $x = [x_1, x_2] \in E$, define the map $J_E : E \to E^*$ by
$$J_E(x) = J_E[x_1, x_2] := [J_X(x_1), J_{X^*} (x_2)],$$
so that for arbitrary $z_1 = [u_1, v_1],\; z_2 = [u_2, v_2]$ in $E$, the duality pairing $\langle \cdot , \cdot \rangle$ is given by
$$\langle z_1, J_E\rangle := \langle u_1, J_X(u_2)\rangle + \langle v_1, J_{X^*} (v_2)\rangle.$$
Then, $E$ is uniformly smooth and uniformly convex.   
\end{lemma}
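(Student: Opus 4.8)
The plan is to reduce the statement to two standard facts: that the $\ell_2$-direct sum of uniformly convex Banach spaces is uniformly convex, and that for any Banach space $Z$ one has $Z$ uniformly smooth if and only if $Z^*$ is uniformly convex (the classical Milman--Pettis and \v{S}mulian duality between uniform convexity and uniform smoothness). Granting the first fact, $E = X \oplus_2 X^*$ is uniformly convex immediately, since its two summands $X$ and $X^*$ are uniformly convex by hypothesis. For uniform smoothness I would first identify the dual: because $X$ is uniformly convex it is reflexive, so $X^{**} = X$, and since the norm on $E$ is the $\ell_2$-norm (whose conjugate exponent is again $2$) one gets $E^* = X^* \oplus_2 X^{**} = X^* \oplus_2 X$ with the $\ell_2$-norm. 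This is exactly the pairing written in the statement, and it also shows that the componentwise map $J_E = (J_X, J_{X^*})$ is the normalized duality map of $E$, being the gradient of $\tfrac12\|\cdot\|_E^2 = \tfrac12\|u\|^2 + \tfrac12\|v\|^2$. Now $E^* = X^* \oplus_2 X$ is again an $\ell_2$-sum of two uniformly convex spaces, hence uniformly convex by the first fact, and therefore $E$ is uniformly smooth by the duality.

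The real content is thus the first fact, which I would prove with the sequential characterization of uniform convexity: $Z$ is uniformly convex iff $\|x_n + y_n\| \to 2$ with $\|x_n\|, \|y_n\| \le 1$ forces $\|x_n - y_n\| \to 0$. Take $z_n = (a_n, b_n)$ and $w_n = (c_n, d_n)$ in the unit ball of $E$ with $\|z_n + w_n\|_E \to 2$. Passing to the scalar vectors $u_n = (\|a_n\|, \|b_n\|)$ and $v_n = (\|c_n\|, \|d_n\|)$ in $\mathbb{R}^2$ and using the triangle inequality in each factor gives $\|z_n + w_n\|_E \le \|u_n + v_n\|_2 \le 2$, so $\|u_n + v_n\|_2 \to 2$ with $\|u_n\|_2, \|v_n\|_2 \le 1$. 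Uniform convexity of the Euclidean plane then yields $\|u_n - v_n\|_2 \to 0$, i.e. $\|a_n\| - \|c_n\| \to 0$ and $\|b_n\| - \|d_n\| \to 0$; moreover the squeeze between $\|z_n+w_n\|_E^2$ and $\|u_n+v_n\|_2^2$ forces the triangle inequalities in each factor to become equalities in the limit.

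From here I would argue factor by factor. On a subsequence where $\|a_n\| \to 0$ one has $\|c_n\| \to 0$ as well, so $\|a_n - c_n\| \le \|a_n\| + \|c_n\| \to 0$ trivially. On a subsequence where $\|a_n\|$ stays bounded below, I normalise to unit vectors $\hat a_n, \hat c_n$ and use the asymptotic triangle equality to deduce $\|\hat a_n + \hat c_n\| \to 2$; uniform convexity of $X$ then gives $\|\hat a_n - \hat c_n\| \to 0$, and rescaling gives $\|a_n - c_n\| \to 0$. The same dichotomy handles the $X^*$-factor, and together they give $\|z_n - w_n\|_E \to 0$, which is what was needed.

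I expect the main obstacle to be precisely this last reduction: extracting, from the asymptotic equality in the scalar $\ell_2$-inequality, that each factor separately realises asymptotic equality in its own triangle inequality, and then handling cleanly the degenerate case in which one factor carries vanishing norm. The duality identification of $E^*$ and the passage between the midpoint and sum formulations of uniform convexity are routine, and I would state them without detailed verification.
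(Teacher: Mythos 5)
The paper contains no proof of this lemma to compare against: it is stated with the citation \cite{uba} and used as a black box, so the only question is whether your blind argument is itself sound. It is. Your route is the natural one: (i) an $\ell_2$-direct sum of two uniformly convex spaces is uniformly convex, applied to $E=X\oplus_2 X^*$; (ii) the isometric identification $E^*=X^*\oplus_2 X^{**}=X^*\oplus_2 X$ (legitimate because uniform convexity of $X$ gives reflexivity by Milman--Pettis), followed by the duality theorem that a space is uniformly smooth if and only if its dual is uniformly convex, with (i) applied once more to $E^*$. Your proof of (i) is correct: the squeeze between $\|z_n+w_n\|_E^2$ and $\|u_n+v_n\|_2^2$ does force each factor's triangle inequality to become an asymptotic equality (the two nonnegative deficits sum to something tending to $0$, hence each tends to $0$), the Euclidean uniform convexity step gives $\|a_n\|-\|c_n\|\to 0$ and $\|b_n\|-\|d_n\|\to 0$, and your dichotomy (norms vanishing along a subsequence versus norms bounded below, where you normalize and invoke uniform convexity of the factor) closes the argument; the subsequence bookkeeping you elide is routine. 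Two cosmetic points: the convexity--smoothness duality you use is due to Day/\v{S}mulian/Lindenstrauss, not Milman--Pettis (the latter only yields reflexivity, which you need separately for $X^{**}=X$); and the lemma also asserts that $J_E=(J_X,J_{X^*})$ is the duality map of $E$ together with the stated pairing, which your observation that $\tfrac12\|\cdot\|_E^2$ splits additively over the factors (so its subdifferential acts componentwise) does cover, though it deserves the one-line verification rather than a passing remark.
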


\begin{remark}\label{z3}
	We remark that for $A$  defined in Lemma \ref{Bs}, $[u^*,v^*]$ is a zero of $A$ if and only if $u^*$ solves $(\ref{ham6})$, where $v^*=Fu^*$.
\end{remark}

\noindent We now present the following theorem.
\begin{theorem}\label{hg}
	Let $X = L_p, 1 < p \le 2$. Also, let $F:X \to X^*$ and $K:X^{*} \to X$ be maximal monotone and bounded maps. Let $E:=X\times X^*$ and $A:E\to E^*$ be defined by $A([u,v]):=[Fu-v,Kv+u]$. For arbitrary $w = (u, v) \in E$, define the iterative sequence $\{w_n\}_{n=1}^{\infty}$ by:
	\begin{eqnarray}\label{3.A3}
	w_{n+1} = J_{E^*}^{-1}(J_E w_n - \alpha_n Aw_n - \alpha_n \theta_nJ_E w_n), \; n \ge 1,
	\end{eqnarray}
	where $\{\alpha_n\}_{n=1}^{\infty}$ and $\{\theta_n\}_{n=1}^{\infty}$ are acceptably paired sequences in $(0,1)$ such that there exists $\gamma > 0$ with $\alpha_n \le \theta_n \gamma \; \forall \; n \ge 1.$ Assume that $u+KFu=0$ has a solution. Then, the sequences $\{u_n\}_{n=1}^{\infty}$ and $\{v_n\}_{n=1}^{\infty}$ converge strongly to $ u^{*}$ and $ v^{*},$ respectively, where $u^* = \Pi_{A^{-1}(0)}0$ of $A^{-1}(0)$ is a unique solution of $u+KFu=0$ satisfying $\langle Ju^*, x - u^* \rangle \le 0$ for all $x \in A^{-1}(0).$ 
\end{theorem}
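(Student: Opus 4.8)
The plan is to reduce Theorem \ref{hg} to a direct application of Theorem \ref{thm3.2} on the product space $E = X \times X^*$. The essential observation is that the iterative scheme (\ref{3.A3}) is exactly the scheme (\ref{3.3}) of Theorem \ref{thm3.2}, but carried out in the space $E$ with the duality map $J_E$ in place of $J$. So the entire task is to verify that $E$ and the operator $A$ satisfy the hypotheses of Theorem \ref{thm3.2}, and then to translate the abstract conclusion (convergence of $\{w_n\}$ to a zero of $A$) back into the concrete statement about $\{u_n\}$, $\{v_n\}$, and the Hammerstein equation.

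First I would check the structural hypotheses on the space. By Lemma \ref{normstar}, with $X = L_p$ uniformly convex and uniformly smooth (recall $1 < p \le 2$, and $X^* = L_q$ is then also uniformly convex and uniformly smooth), the product space $E = X \times X^*$ equipped with the stated norm is itself uniformly convex and uniformly smooth, with duality map given componentwise by $J_E[x_1,x_2] = [J_X(x_1), J_{X^*}(x_2)]$. This places $E$ in the setting required by Theorem \ref{thm3.2}. Next I would verify that $A$ is maximal monotone: this is precisely the content of Lemma \ref{Bs}, which gives that $A([u,v]) = [Fu - v, Kv + u]$ is maximal monotone whenever $F$ and $K$ are monotone. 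Boundedness of $A$ follows from the boundedness of $F$ and $K$ together with the linear terms, and the condition $0 \in R(A)$ is guaranteed by the assumption that $u + KFu = 0$ has a solution, via Remark \ref{z3}: if $u^*$ solves the Hammerstein equation then $[u^*, Fu^*]$ is a zero of $A$, so $0 \in R(A)$.

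With these verifications in hand, Theorem \ref{thm3.2} applies verbatim to the sequence $\{w_n\}$ generated by (\ref{3.A3}), since the coefficient sequences $\{\alpha_n\}$ and $\{\theta_n\}$ are acceptably paired with $\alpha_n \le \theta_n \gamma$. It yields that $\{w_n\}$ converges strongly to the unique point $w^* = \Pi_{A^{-1}(0)} 0$ satisfying $\langle J_E w^*, w - w^* \rangle \le 0$ for all $w \in A^{-1}(0)$. Writing $w^* = [u^*, v^*]$ and reading off components, strong convergence $w_n \to w^*$ in the product norm forces $u_n \to u^*$ in $X$ and $v_n \to v^*$ in $X^*$ separately. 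Finally, since $w^* \in A^{-1}(0)$, Remark \ref{z3} identifies $u^*$ as a solution of $u + KFu = 0$ with $v^* = Fu^*$, which is the desired conclusion.

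The only genuine subtlety—and the step I would treat most carefully—is that Theorem \ref{thm3.2} was stated for $E = L_p$, whereas here $E$ is the product $X \times X^*$, which is not literally an $L_p$ space. What actually drives the proof of Theorem \ref{thm3.2} is not the specific $L_p$ structure but rather uniform convexity and uniform smoothness of $E$ (needed for Lemma \ref{katak} and for the generalized projection), the strong monotonicity estimate $\langle x - y, Jx - Jy \rangle \ge (p-1)\|x-y\|^2$, and the Lipschitz property of $J^{-1}$. So the honest obstacle is to confirm that the inequality (\ref{2.10}) and the Lipschitz bound (\ref{2.11}) transfer to $E = X \times X^*$ with an appropriate constant. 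Because $J_E$ acts componentwise and the norm on $E$ is the $\ell_2$-combination of the component norms, these estimates do descend to the product with the same exponent $p - 1$ (one sums the componentwise inequalities), so the argument of Theorem \ref{thm3.2} goes through unchanged with $J$ replaced by $J_E$. I would state this transfer explicitly rather than merely invoking Theorem \ref{thm3.2} as a black box, since that is where all the real work of adapting the result resides.
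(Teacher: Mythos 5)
Your proof follows exactly the same route as the paper's: invoke Lemma \ref{normstar} for uniform convexity and uniform smoothness of $E = X\times X^*$, Lemma \ref{Bs} for maximal monotonicity of $A$, and then conclude via Theorem \ref{thm3.2} and Remark \ref{z3}. If anything, you are more careful than the paper, whose proof is precisely this short invocation and silently passes over the subtlety you flag --- that Theorem \ref{thm3.2} is stated for $E=L_p$ rather than for the product space, so the transfer of the estimates (\ref{2.10}) and (\ref{2.11}) to $X\times X^*$ with the componentwise duality map $J_E$ does need the explicit verification you propose.
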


\begin{proof}
	By Lemma \ref{normstar}, $E$ is uniformly convex and uniformly smooth, and by Lemma \ref{Bs}, $A$ is maximal monotone. Hence, the conclusion follows from Theorem \ref{thm3.2} and Remark \ref{z3}.
\end{proof}
\noindent Theorem \ref{hg} can also be stated as follows.
\begin{theorem}\cite{ubaetal}\label{thvm3.2}
	Let $X = L_p, 1 < p \le 2$. Also, let $F:X \to X^*$ and $K:X^{*} \to X$ be maximal monotone and bounded maps such that $0 \in R(F).$  For arbitrary $(u_{1},v_{1}) \in X\times X^{*}$, define the iterative sequences $\{u_n\}_{n=1}^{\infty}$ and $\{v_n\}_{n=1}^{\infty}$ by
	\begin{equation}\label{3v.3}
	\begin{cases}
	u_{n+1} = J^{-1}(Ju_n - \alpha_n (Fu_n-v_{n}) - \alpha_n \theta_nJu_n), \; n \ge 1,\\
	v_{n+1} = J^{-1}(Jv_n - \alpha_n (Kv_n+u_{n}) - \alpha_n \theta_nJv_n), \; n \ge 1,
	\end{cases}
	\end{equation}
	where $\{\alpha_n\}_{n=1}^{\infty}$ and $\{\theta_n\}_{n=1}^{\infty}$ are acceptably paired sequences in $(0,1)$ such that there exists $\gamma > 0$ with $\alpha_n \le \theta_n \gamma \; \forall \; n \ge 1.$ Assume that $ u+KFu=0$ has a solution. Then, the sequences $\{u_n\}_{n=1}^{\infty}$ and $\{v_n\}_{n=1}^{\infty}$ converge strongly to $ u^{*}$ and $ v^{*},$ respectively, where $u^* = \Pi_{A^{-1}(0)}0$ of $A^{-1}(0)$ is a unique solution of $u+KFu=0$ satisfying $\langle Ju^*, x - u^* \rangle \le 0$ for all $x \in A^{-1}(0).$ 
\end{theorem}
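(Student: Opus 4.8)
The plan is to recognize Theorem~\ref{thvm3.2} as nothing more than the coordinatewise transcription of Theorem~\ref{hg}: once the single product-space recursion (\ref{3.A3}) is written out slot by slot, it becomes exactly the coupled system (\ref{3v.3}), so the convergence conclusion is inherited verbatim and no fresh analysis is needed. The whole analytic content already resides in Theorem~\ref{thm3.2} (via Theorem~\ref{hg}); what remains is bookkeeping about how the duality map of the product space decouples.

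First I would record the coordinatewise structure of the data on $E = X \times X^*$. By Lemma~\ref{normstar} the duality map factors as $J_E[x_1,x_2] = [J_X x_1, J_{X^*} x_2]$, so $J_E$ is a coordinatewise bijection and its inverse (the map $E^* \to E$ appearing in (\ref{3.A3})) acts as $J_E^{-1}[y_1,y_2] = [J_X^{-1} y_1, J_{X^*}^{-1} y_2]$; here $X = L_p$ and $X^* = L_q$ are both uniformly convex and uniformly smooth, so each of $J_X^{-1}, J_{X^*}^{-1}$ is well defined and single valued. By Lemma~\ref{Bs} the operator has the split form $A[u,v] = [Fu - v,\; Kv + u]$ and is maximal monotone. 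Consequently, writing $w_n = [u_n, v_n]$, the $E^*$-vector $J_E w_n - \alpha_n A w_n - \alpha_n \theta_n J_E w_n$ has first slot $J_X u_n - \alpha_n(Fu_n - v_n) - \alpha_n\theta_n J_X u_n$ and second slot $J_{X^*} v_n - \alpha_n(Kv_n + u_n) - \alpha_n\theta_n J_{X^*} v_n$.

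Next I would apply $J_E^{-1}$ to this vector. Since $J_E^{-1}$ acts slotwise, its application reproduces precisely the two lines of (\ref{3v.3}), where the unlabeled symbol $J$ must be read as $J_X$ in the $u$-line and as $J_{X^*}$ in the $v$-line. Hence the sequence $\{w_n\}$ generated by (\ref{3.A3}) and the pair $\{(u_n, v_n)\}$ generated by (\ref{3v.3}) are one and the same object, merely presented in vector versus component form.

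Finally I would invoke Theorem~\ref{hg}. The hypotheses on $\{\alpha_n\}$ and $\{\theta_n\}$ are exactly those required there; solvability of $u + KFu = 0$ gives, through Remark~\ref{z3}, that $0 \in R(A)$; and $A$ is maximal monotone on the uniformly convex, uniformly smooth space $E$ by Lemmas~\ref{Bs} and~\ref{normstar}. Thus $w_n \to [u^*, v^*] = \Pi_{A^{-1}(0)} 0$ strongly in $E$. Because the norm on $E$ is $\|[u,v]\|_E = (\|u\|^2 + \|v\|^2)^{1/2}$, strong convergence of $w_n$ forces $u_n \to u^*$ and $v_n \to v^*$ separately, and Remark~\ref{z3} identifies $u^*$ as the solution of $u + KFu = 0$ with $v^* = Fu^*$. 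There is no genuine obstacle: the only point requiring care is verifying that the inverse duality map $J_E^{-1}$ decouples as $(J_X^{-1}, J_{X^*}^{-1})$ and keeping straight which factor space each generic $J$ in (\ref{3v.3}) refers to.
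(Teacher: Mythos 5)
Your proposal is correct and takes essentially the same route as the paper: the paper presents this theorem as a componentwise restatement of Theorem \ref{hg}, whose proof consists precisely of the steps you give --- Lemma \ref{normstar} for the product space, Lemma \ref{Bs} for maximal monotonicity of $A[u,v]=[Fu-v,Kv+u]$, then Theorem \ref{thm3.2} together with Remark \ref{z3}. Your explicit check that $J_E^{-1}$ decouples slotwise (so that the vector recursion (\ref{3.A3}) is literally the coupled system (\ref{3v.3}), with $J$ read as $J_X$ in the $u$-line and $J_{X^*}$ in the $v$-line) is a bookkeeping detail the paper leaves implicit.
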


\subsection{Application to convex optimization problem}
The following lemma will be very effective in the proof of the theorem in this subsection.
\begin{lemma} [\cite{rock2}]\label{opt}
	Let $X$ be a Banach space and let $f:X\rightarrow \mathbb{R}\cup \{\infty\}$ be a proper, convex, and lower semi-continuous function. Then, the subdifferential of $f$, $\partial f$ is a maximal monotone mapping from $X$ to $X^*$.
\end{lemma}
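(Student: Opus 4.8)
The plan is to verify the two defining properties of a maximal monotone operator in turn, with monotonicity being routine and maximality the substantive part. For monotonicity I would take $x^* \in \partial f(x)$ and $y^* \in \partial f(y)$ and write the two subgradient inequalities $f(y) - f(x) \ge \langle y - x, x^*\rangle$ and $f(x) - f(y) \ge \langle x - y, y^*\rangle$; adding them cancels the $f$-terms and leaves $\langle x - y, x^* - y^*\rangle \ge 0$, which is exactly monotonicity of $\partial f$.

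For maximality I would verify the surjectivity condition $R(J + \lambda\,\partial f) = X^*$ for every $\lambda > 0$, which is the form of maximal monotonicity adopted in this paper and which, in the reflexive setting relevant to all the applications here, coincides with the graph-maximality statement of Rockafellar. Fix $\lambda > 0$ and an arbitrary $w^* \in X^*$; the goal is to produce $\bar x$ with $w^* \in J\bar x + \lambda\,\partial f(\bar x)$. The device is to introduce the auxiliary functional $g(x) = \tfrac{1}{2}\|x\|^2 - \langle x, w^*\rangle + \lambda f(x)$, which is again proper, convex, and lower semi-continuous, and to observe (using $J = \partial(\tfrac{1}{2}\|\cdot\|^2)$) that $\bar x$ minimizes $g$ precisely when $0 \in \partial g(\bar x)$.

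The key steps would then be: (i) show $g$ is coercive, using that a proper lsc convex $f$ admits an affine minorant (so the quadratic term dominates and $g(x) \to +\infty$ as $\|x\| \to \infty$); (ii) invoke reflexivity together with weak lower semi-continuity of $g$ to conclude that the coercive functional attains its infimum at some $\bar x$; and (iii) apply the subdifferential sum rule $\partial g = J(\cdot) - w^* + \lambda\,\partial f$ at $\bar x$, which is licensed because $\tfrac{1}{2}\|\cdot\|^2$ is finite and continuous everywhere, so the standard qualification condition for the sum rule holds. Combining (ii) and (iii) gives $0 \in J\bar x - w^* + \lambda\,\partial f(\bar x)$, i.e.\ $w^* \in (J + \lambda\,\partial f)(\bar x)$, which is the desired surjectivity and hence maximal monotonicity.

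I expect the main obstacle to be step (iii) together with the existence issue in step (ii). The sum rule is where the convex-analytic subtlety lives: one must certify the qualification condition so that $\partial(g_1 + g_2) = \partial g_1 + \partial g_2$ holds with equality rather than mere inclusion. The existence of the minimizer is clean in the reflexive $L_p$ spaces where the result is ultimately applied, but for a genuinely general Banach space $X$ the direct method fails and one cannot simply minimize $g$; there Rockafellar's original argument, or a substitute based on Ekeland's variational principle and the Br\o ndsted--Rockafellar density of approximate subgradients, is needed to locate the point realizing $w^*$. Since every application in this paper takes place in reflexive $L_p$ spaces, the reflexive argument above suffices for our purposes, and for the detailed general statement we refer to the cited work of Rockafellar.
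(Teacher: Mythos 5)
Your argument is correct, but there is nothing in the paper to compare it against: the paper states this lemma with a citation to Rockafellar \cite{rock2} and gives no proof, so your write-up is a genuine addition rather than an alternative to an internal argument, and the real comparison is with the cited theorem. Rockafellar's theorem is graph-maximality of $\partial f$ in an arbitrary Banach space; what you prove is the range condition $R(J+\lambda\,\partial f)=X^*$, $\lambda>0$, in reflexive spaces. That is in fact the better target here, since it is the definition of maximality this paper adopts and the property its Theorem \ref{thm3.2} actually consumes (existence of the resolvent points $y_n$ solving $\theta_n Jy_n+Ay_n=0$ in (\ref{2.14})). Your three steps are sound: the affine-minorant coercivity bound; the direct method (coercivity, reflexivity, and weak lower semicontinuity of the convex lsc functional $g$ yield a minimizer); and the Moreau--Rockafellar sum rule, whose qualification hypothesis holds because $\tfrac12\|\cdot\|^2-\langle\cdot,w^*\rangle$ is finite and continuous everywhere, combined with Asplund's identity $\partial\bigl(\tfrac12\|\cdot\|^2\bigr)=J$. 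Two refinements are worth recording. First, to pass from your range identity back to graph-maximality (Minty's argument) you need $J$ to be strictly monotone, i.e., the norm to be strictly convex --- automatic in $L_p$, $1<p\le 2$, but it should be said, since otherwise ``coincides with the graph-maximality statement'' is unjustified. Second, your closing caveat can be sharpened into a necessity: in a non-reflexive space the range condition fails already for $f\equiv 0$, because $R(J)=X^*$ forces reflexivity by James' theorem; so with this paper's definition of maximal monotonicity the lemma is intrinsically a reflexive-space statement, and your restriction to the reflexive case loses nothing (the same remark disposes of the endpoint $\lambda=0$ in the paper's definition, which amounts to surjectivity of $J$ itself).
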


\begin{remark}\label{r4}
	Let $E$ be a real Banach space with dual $E^*$ and let $f:X\rightarrow \mathbb{R}\cup \{\infty\}$ be a proper, convex, and lower semi-continuous function. It well known that $f$ posses a subdifferential $\partial f:E\to 2^{E^*}$. Moreover, $0\in \partial f(u^*)$ if and only if $u^*$ is a minimizer of $f$.
\end{remark}
%\vskip0.3truecm
\noindent The theorem is given as the following:

\begin{theorem}\label{thmb3.2}
	Let $E = L_p, 1 < p \le 2$. Also, let $f:E \to [0,\infty)$ be a convex, bounded, and lower semi-continuous Fr\`{e}chet differentiable functional such that $0 \in R(\partial f).$  For arbitrary $x_1 \in E$, define the iterative sequence $\{x_n\}_{n=1}^{\infty}$ by:
	\begin{eqnarray}\label{3b.3}
	x_{n+1} = J^{-1}(Jx_n - \alpha_n\phi_{n} - \alpha_n \theta_nJx_n),\;\;\phi_{n}\in  (\partial f)x_n, \; n \ge 1,
	\end{eqnarray}
	where $\{\alpha_n\}_{n=1}^{\infty}$ and $\{\theta_n\}_{n=1}^{\infty}$ are acceptably paired sequences in $(0,1)$ such that there exists $\gamma > 0$ with $\alpha_n \le \theta_n \gamma \; \forall \; n \ge 1.$ Then, the sequence $\{x_n\}_{n=1}^{\infty}$ converges strongly to the unique point $x^* = \Pi_{(\partial f)^{-1}(0)}0$ of $(\partial f)^{-1}(0)$ satisfying $\langle Jx^*, x - x^* \rangle \le 0$ for all $x \in (\partial f)^{-1}(0).$
\end{theorem}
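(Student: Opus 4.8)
The plan is to recognize Theorem \ref{thmb3.2} as a direct specialization of Theorem \ref{thm3.2} with the operator $A$ taken to be the subdifferential $\partial f$. Setting $A := \partial f$, the recursion (\ref{3b.3}) becomes precisely (\ref{3.3}): since $f$ is Fr\'echet differentiable, $(\partial f)x_n$ is the singleton $\{\nabla f(x_n)\}$, so the choice $\phi_n \in (\partial f)x_n$ is forced to be $\phi_n = \nabla f(x_n)$ and plays exactly the role of $Ax_n$. Once the hypotheses of Theorem \ref{thm3.2} are verified for $A = \partial f$, the strong convergence of $\{x_n\}$ to the asserted point follows immediately, and the identification of the limit with a minimizer of $f$ is supplied by Remark \ref{r4}.

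The verification reduces to checking the three structural hypotheses of Theorem \ref{thm3.2} for $A = \partial f$. First, maximal monotonicity is exactly the content of Lemma \ref{opt}, using that $f$ is proper, convex, and lower semi-continuous. Second, the condition $0 \in R(A) = R(\partial f)$ is assumed directly in the statement, and by Remark \ref{r4} it guarantees that $(\partial f)^{-1}(0)$ is the nonempty set of minimizers of $f$. Third, I must confirm that $A = \partial f$ is a bounded map, i.e., that it carries bounded sets into bounded sets; because $\partial f$ is single-valued and coincides with $\nabla f$, this amounts to showing that $\nabla f$ is bounded on bounded subsets of $E$.

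The boundedness of $\partial f$ is the one point requiring genuine convex-analytic input, and I expect it to be the main obstacle. I would argue it as follows: a convex function that is finite and bounded above on a neighborhood of a point is locally Lipschitz there, and a locally Lipschitz convex function has locally bounded subdifferential. Since $f$ is assumed bounded, in the sense of being bounded on bounded subsets of $E$, this local control propagates to show that for every $R > 0$ the image $\partial f(\{x : \|x\| \le R\})$ is norm-bounded in $E^*$, with the bound on a ball of radius $R$ controlled by the oscillation of $f$ on a slightly larger ball together with the radius. Combined with single-valuedness from Fr\'echet differentiability, this yields exactly the boundedness of $A$ required by Theorem \ref{thm3.2}.

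With all three hypotheses in hand, Theorem \ref{thm3.2} applies verbatim to $A = \partial f$ and yields strong convergence of $\{x_n\}$ to the unique point $x^* = \Pi_{(\partial f)^{-1}(0)}0$ characterized by $\langle Jx^*, x - x^* \rangle \le 0$ for all $x \in (\partial f)^{-1}(0)$. Invoking Remark \ref{r4} once more identifies $x^*$ as a minimizer of $f$, which completes the argument.
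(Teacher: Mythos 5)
Your proposal takes essentially the same route as the paper, whose entire proof is to invoke Lemma \ref{opt} for the maximal monotonicity of $\partial f$ and then apply Theorem \ref{thm3.2} together with Remark \ref{r4}. You are in fact more careful than the paper: Theorem \ref{thm3.2} requires $A$ to be a \emph{bounded} map, a hypothesis the paper never verifies for $\partial f$, and your convex-analytic argument (boundedness of $f$ on bounded sets implies local Lipschitz continuity, hence norm-bounded subdifferentials on bounded sets, which by Fr\'echet differentiability coincide with $\{\nabla f\}$) correctly fills that gap.
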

\begin{proof}
The subdifferential of $f $, i.e., $\partial f$, is maximal monotone by Lemma \ref{opt}. We conclude the proof by employing Theorem \ref{thm3.2} and Remark \ref{r4}.

\end{proof}

\subsection{Application to variational inequality problems}
\vskip0.2truecm
\noindent Let $ E$ be a real Banach space with a dual space $E^*$ and $C$ be a nonempty closed and convex subset of $E$. Let $T:C\subset E\to E^*$ be an arbitrary nonlinear map with $ C\subset D(T)$ and $ f\in E.$  The point $ x^*\in C$ is said to be a solution to the \textit{variational inequality problem} $ \mathrm{VI}(T, f, C)$ if there exists a $ y\in Tx^{*}$ such that
\begin{equation}\label{key1}
\langle t-x^{*}, y-f\rangle \geq 0 ~\textup{for all}~ t\in C.
\end{equation}
\vskip0.3true cm
\noindent We denote the set of solutions of (\ref{key1}) by $SOL(T, f, C).$ The origin of variational inequality problems can be traced to Stampacchia \cite{stamp}, who proposed and investigated them in 1964. This kind of problem, which plays a critical role in nonlinear analysis, is also related to fixed point problems, zeros of nonlinear operators, complementarity problems, and convex minimization problems (see, for example, \cite{Nnakwe20, Saewan13, Thong18c}).

\vskip0.2truecm\noindent
The subsequent result of Rockafella is very crucial in the proof of the theorem in this subsection.
\begin{lemma}[\cite{rock}]\label{rkf}
	Let $C$ be a nonempty closed and convex subset of a real Banach space $E$ with a dual space $E^*$, and let $T:E\to 2^{E^*}$ be a monotone and  hemicontinuous mapping. Then, the mapping $A:E\to 2^{E^*}$ defined by
	\begin{eqnarray}
	Ax=\begin{cases}
	Tx+N_{C}x\qquad \mbox{if} \qquad x\in C,\\
	\emptyset \qquad\qquad\qquad \mbox{if} \qquad x \not\in C,
	\end{cases}
	\end{eqnarray}
	where $N_{C}x=\{u\in E^*:\big<x-v,u\big>\geq 0\;\;\textup{for all}\;\;v\in E\}$, is maximal monotone. Moreover, $0\in Au^*$ if and only if $u^*\in SOL(T, 0, C)$.
\end{lemma}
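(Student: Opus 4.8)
The plan is to separate the statement into its two independent assertions — the maximal monotonicity of $A$ and the equivalence $0\in Au^*\iff u^*\in SOL(T,0,C)$ — and to dispatch the second by unfolding definitions while reducing the first to results already collected in the preliminaries.

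I would begin with the equivalence, which is the routine part. Since $Ax=\emptyset$ for $x\notin C$, any $u^*$ with $0\in Au^*$ must satisfy $u^*\in C$, and then $0\in Tu^*+N_Cu^*$ means there is $y\in Tu^*$ with $-y\in N_Cu^*$. Reading the definition of the normal cone (with the constraint $v\in C$, which is what the displayed line must mean) gives $\langle u^*-v,-y\rangle\ge 0$, i.e. $\langle v-u^*,y\rangle\ge 0$ for every $v\in C$. Setting $f=0$ in \eqref{key1}, this is exactly $u^*\in SOL(T,0,C)$, and since every step is reversible the two conditions are equivalent.

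For the maximal monotonicity I would write $A=N_C+T$ and handle the summands with the lemmas already proved. The normal cone $N_C$ is the subdifferential of the indicator function $\iota_C$ (equal to $0$ on $C$ and $+\infty$ elsewhere), which is proper, convex and lower semicontinuous because $C$ is nonempty, closed and convex; hence Lemma \ref{opt} applies and $N_C=\partial\iota_C$ is maximal monotone. The map $T$ is monotone, hemicontinuous and defined on all of $E$. I would then invoke Lemma \ref{BrMax} with $T_1=N_C$ and $T_2=T$ to conclude that $A=T_1+T_2$ is maximal monotone; monotonicity of $A$ is in any case immediate from the monotonicity of the two summands.

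The main obstacle is matching the hypotheses of Lemma \ref{BrMax}, namely that $E$ be reflexive and strictly convex together with its dual and that $T$ carry bounded sets into bounded sets. In the intended setting $E=L_p$, $1<p\le 2$, reflexivity and the strict-convexity requirements are automatic, so the geometric conditions cause no trouble; the delicate point is the boundedness of $T$, which I would justify from the fact that a monotone operator is locally bounded at every interior point of its domain, so that an everywhere-defined $T$ is locally bounded and hence bounded on bounded sets. Should one prefer to avoid the boundedness hypothesis entirely, the same conclusion follows from Rockafellar's sum theorem \cite{rock}: an everywhere-defined monotone hemicontinuous operator is maximal monotone, and since $\operatorname{int}(D(T))\cap D(N_C)=E\cap C=C\neq\emptyset$, the sum $T+N_C$ is maximal monotone. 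Either route closes the argument, and together with the equivalence above completes the proof.
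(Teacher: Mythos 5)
The paper contains no proof of Lemma \ref{rkf}: it is quoted from Rockafellar \cite{rock} and used as a black box in the proof of Theorem \ref{tChmv3.2}, so your attempt can only be judged on its own merits, not against a proof in the paper. Your treatment of the equivalence $0\in Au^*\Leftrightarrow u^*\in SOL(T,0,C)$ is correct, and your reading of the normal cone with $v\in C$ is the right repair of the paper's typo: taken literally with $v\in E$, one would get $N_Cx=\{0\}$ for every $x$, and the lemma would be false.

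The maximality half, however, contains a genuine error in your primary route. To apply Lemma \ref{BrMax} with $T_1=N_C=\partial\iota_C$ and $T_2=T$ you assert that an everywhere-defined monotone map, being locally bounded, ``is bounded on bounded sets.'' That implication fails in infinite dimensions: local boundedness bounds $T$ on a neighborhood of each point, but a bounded set need not be covered by finitely many such neighborhoods. Concretely, on $\ell_2$ the function $f(x)=\sum_{n}n\,x_n^{2n}$ is finite, convex and continuous, yet $f(e_n)=n$; any $x^*\in\partial f(e_n)$ satisfies $\langle x^*,e_n\rangle\ge f(e_n)-f(0)=n$, so the everywhere-defined, hemicontinuous, monotone gradient map $\nabla f$ is locally bounded but \emph{not} bounded on bounded sets. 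Hence Route A does not close as written. Your fallback route is the one to keep, and it is essentially the standard argument: $T$ monotone, hemicontinuous and defined on all of $E$ is maximal monotone (Minty--Browder); $N_C=\partial\iota_C$ is maximal monotone by Lemma \ref{opt}; and Rockafellar's sum theorem with $D(N_C)\cap \mathrm{int}\,D(T)=C\cap E=C\neq\emptyset$ gives maximality of $A=T+N_C$ (note this sum theorem is in Rockafellar's 1970 Transactions paper, not in \cite{rock}, and it requires $E$ reflexive). Accordingly, neither of your routes proves the lemma for an arbitrary real Banach space as stated --- Route B needs reflexivity, Route A additionally needs strict convexity of $E$ and $E^*$ --- but the reflexive case is all the paper ever uses, since the application is in $E=L_p$, $1<p\le 2$. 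You should promote Route B to be the proof and delete, or correct, the boundedness claim in Route A.
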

\noindent Now, we  employ Theorem \ref{thm3.2} to estimate a solution of a VIP by proving the following theorem. 

\begin{theorem}\label{tChmv3.2}
	Let $E = L_p, 1 < p \le 2$. Also, let $T:E \to E^*$ be a bounded, monotone and hemicontinuous mapping such that $ SOL\,(T,0,C) \neq \emptyset.$  For arbitrary $x_1 \in E$, define the iterative sequence $\{x_n\}_{n=1}^{\infty}$ in $ C $ by
	\begin{eqnarray}\label{3Cvc.3}
	x_{n+1} = J^{-1}\left[Jx_n - \alpha_n (Tx_{n}+\beta_{n}) - \alpha_n \theta_nJx_n \right],\quad \beta_n\in\mathrm{N}_{C}(x_{n}),\; n \ge 1,
	\end{eqnarray}
	where $\{\alpha_n\}_{n=1}^{\infty}$ and $\{\theta_n\}_{n=1}^{\infty}$ are acceptably paired sequences in $(0,1)$ such that there exists $\gamma > 0$ with $\alpha_n \le \theta_n \gamma \; \forall \; n \ge 1.$ Furthermore, assume that the sequence $ \{\beta_n\} $ is bounded. Then, the sequence $\{x_n\}_{n=1}^{\infty}$ converges strongly to the unique point $x^* = \Pi_{SOL\,(T,0,C)}0$ of $ SOL\,(T,0,C)$ satisfying $\langle Jx^*, x - x^* \rangle \le 0$ for all $x \in SOL\,(T,0,C).$
\end{theorem}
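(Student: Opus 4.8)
The plan is to reduce the variational inequality problem to the abstract zero-finding problem solved by Theorem \ref{thm3.2}, following the pattern of the proofs of Theorems \ref{hg} and \ref{thmb3.2}. First I would invoke Lemma \ref{rkf}: since $T$ is monotone and hemicontinuous, the operator $A : E \to 2^{E^*}$ given by $Ax = Tx + N_C(x)$ for $x \in C$ is maximal monotone, and $0 \in Au^*$ if and only if $u^* \in SOL(T, 0, C)$. Hence $A^{-1}(0) = SOL(T, 0, C)$, and the standing hypothesis $SOL(T, 0, C) \neq \emptyset$ is exactly the condition $0 \in R(A)$ required by Theorem \ref{thm3.2}. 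Next I would observe that the recursion (\ref{3Cvc.3}) is precisely the recursion (\ref{3.3}) of Theorem \ref{thm3.2} for this $A$, where at each step the selection $Tx_n + \beta_n \in Ax_n$ (with $\beta_n \in N_C(x_n)$) is used. Since $A$ is multivalued, this is the situation covered by the remark following Corollary \ref{cor3.3}, which records that the method carries over to multivalued maps once a selection of $Ax_n$ is fixed.

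The main obstacle is the boundedness requirement. Theorem \ref{thm3.2} assumes $A$ bounded, whereas $N_C$ — and therefore $A = T + N_C$ — is not bounded in general, the normal cone being unbounded. This is precisely why the hypothesis that $\{\beta_n\}$ is bounded is imposed. I would trace the only place where boundedness of $A$ is used in Theorems \ref{thm3.1} and \ref{thm3.2}, namely in controlling the terms $\|Ax_n + \theta_n Jx_n\|$ that enter the constant $M_0$. Replacing $Ax_n$ by the selection $Tx_n + \beta_n$, the boundedness of $T$ on bounded sets together with $\sup_n \|\beta_n\| < \infty$ guarantees that $\{\|Tx_n + \beta_n + \theta_n Jx_n\|\}$ stays bounded along the iteration once $\{x_n\}$ is shown bounded. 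Thus the induction of Theorem \ref{thm3.1} goes through with $M_0$ redefined in terms of the bound on $T$ over the relevant ball and $\sup_n \|\beta_n\|$, and the convergence argument of Theorem \ref{thm3.2} then applies essentially unchanged.

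With these adjustments the conclusion of Theorem \ref{thm3.2} yields that $\{x_n\}$ converges strongly to $x^* = \Pi_{A^{-1}(0)} 0 = \Pi_{SOL(T,0,C)} 0$, the unique point of $SOL(T, 0, C)$ satisfying $\langle Jx^*, x - x^* \rangle \le 0$ for all $x \in SOL(T, 0, C)$, which is the assertion of the theorem. The one point deserving care in a full write-up is verifying that the iterates $x_n$ indeed remain in $C$ (so that $N_C(x_n)$ is nonempty and the selection $\beta_n$ is available); this should follow from the generalized-projection interpretation of the recursion, but I would check it explicitly rather than assume it.
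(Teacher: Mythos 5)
Your proposal takes essentially the same route as the paper: the paper's entire proof of this theorem is the single line that the result follows from Lemma \ref{rkf} and Theorem \ref{thm3.2}, exactly the reduction you describe. Your additional care — tracing where boundedness of $A$ enters so that the hypothesis $\sup_n\|\beta_n\|<\infty$ can substitute for boundedness of the (unbounded) normal-cone part, handling the multivalued selection $Tx_n+\beta_n\in Ax_n$, and flagging that $x_n\in C$ must be verified for $N_C(x_n)$ to be nonempty — goes beyond what the paper records and addresses real points the paper leaves implicit.
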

\begin{proof}
	The proof follows from  Lemma \ref{rkf} and Theorem \ref{thm3.2}.
\end{proof}

\subsection{Application to J-fixed points}
\begin{theorem}\label{thmv3.2}
	Let $E = L_p, 1 < p \le 2$, and $T:E \to E^*$ be a $ J $-pseudocontractive and bounded map such that
	$(J –T)$ is maximal monotone and $ F_{E}^{J}(T)=\{u\in E:Ju= Tu\}\neq \varnothing.$  For arbitrary $x_1 \in E$, define the iterative sequence $\{x_n\}_{n=1}^{\infty}$ by
	\begin{eqnarray}\label{3vc.3}
	x_{n+1} = J^{-1}\left[ (1-\alpha_n)Jx_n - \alpha_n Tx_n - \alpha_n \theta_nJx_n \right] , \; n \ge 1,
	\end{eqnarray}
	where $\{\alpha_n\}_{n=1}^{\infty}$ and $\{\theta_n\}_{n=1}^{\infty}$ are acceptably paired sequences in $(0,1)$ such that there exists $\gamma > 0$ with $\alpha_n \le \theta_n \gamma \; \forall \; n \ge 1.$ Then, the sequence $\{x_n\}_{n=1}^{\infty}$ converges strongly to the unique point $x^* = \Pi_{(J–T)^{-1}(0)}0$ of $(J–T)^{-1}(0)$ satisfying $\langle Jx^*, x - x^* \rangle \le 0$ for all $x \in (J–T)^{-1}(0).$
\end{theorem}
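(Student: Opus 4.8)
The plan is to deduce this theorem from Theorem \ref{thm3.2} by exhibiting the correct monotone operator. The conclusion essentially forces the choice: the asserted limit lies in $(J-T)^{-1}(0)$ and the hypothesis supplies maximal monotonicity of $J-T$, so I would set $A := J - T$. Then $A^{-1}(0) = \{u \in E : Ju - Tu = 0\} = \{u \in E : Ju = Tu\} = F_E^J(T)$, which is nonempty by assumption; hence $0 \in R(A)$, and $\Pi_{A^{-1}(0)}0 = \Pi_{(J-T)^{-1}(0)}0$, matching the claimed limit point.

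Next I would verify the remaining hypotheses of Theorem \ref{thm3.2} for this $A$. The map $A = J - T$ sends $E$ into $E^*$, is maximal monotone by hypothesis, and is bounded because $J$ is a duality map on $L_p$ (so it carries bounded sets to bounded sets) while $T$ is bounded by assumption, and a difference of two bounded maps into $E^*$ is again bounded. With $E = L_p$, $1 < p \le 2$, all the hypotheses of Theorem \ref{thm3.2} are then in place, and its conclusion delivers strong convergence of the iteration it governs to the unique $x^* = \Pi_{(J-T)^{-1}(0)}0$ characterized by $\langle Jx^*, x - x^* \rangle \le 0$ for all $x \in (J-T)^{-1}(0)$, which is exactly the stated variational inequality.

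It remains only to check that the iteration in Theorem \ref{thm3.2} specializes to the recursion (\ref{3vc.3}), and this is the one delicate point. Substituting $A = J - T$ into $x_{n+1} = J^{-1}(Jx_n - \alpha_n Ax_n - \alpha_n\theta_n Jx_n)$ gives
\begin{eqnarray*}
x_{n+1} = J^{-1}\big(Jx_n - \alpha_n(Jx_n - Tx_n) - \alpha_n\theta_n Jx_n\big) = J^{-1}\big((1-\alpha_n)Jx_n + \alpha_n Tx_n - \alpha_n\theta_n Jx_n\big).
\end{eqnarray*}
The main obstacle is the sign of the $Tx_n$ term: the reduction produces $+\alpha_n Tx_n$, whereas (\ref{3vc.3}) is displayed with $-\alpha_n Tx_n$. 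The two agree only when the middle sign in (\ref{3vc.3}) is read as $+$ (equivalently, when $T$ is replaced by $-T$). Taken literally, the $-$ sign corresponds instead to $A = J + T$; but under the present hypotheses only $J - T$ is known to be monotone, since $J$-pseudocontractivity gives $\langle Tx - Ty, x - y\rangle \le \langle Jx - Jy, x - y\rangle$ — yielding monotonicity of $J - T$ — with no matching lower bound to force monotonicity of $J + T$, and $(J+T)^{-1}(0)$ is in any case generally disjoint from $F_E^J(T)$. Hence Theorem \ref{thm3.2} applies through $A = J - T$ alone, and the argument closes precisely once the sign in (\ref{3vc.3}) is taken as $+\alpha_n Tx_n$; I would therefore treat the displayed $-$ as a sign to be reconciled before the reduction, after which the conclusion is immediate from Theorem \ref{thm3.2} together with the identification $A^{-1}(0) = F_E^J(T)$.
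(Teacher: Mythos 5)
Your proposal is correct and follows essentially the paper's own (one-line) argument: set $A = J - T$, note that $J$-pseudocontractivity of $T$ makes $A$ monotone (maximal monotone by hypothesis, bounded since $J$ and $T$ are) with $A^{-1}(0) = F_{E}^{J}(T) \neq \varnothing$, and invoke Theorem \ref{thm3.2}. You are also right about the delicate point: substituting $A = J - T$ into the recursion of Theorem \ref{thm3.2} produces $+\alpha_n Tx_n$, so the minus sign displayed in (\ref{3vc.3}) is a typo (as written it would correspond to $A = J + T$, which the hypotheses do not make monotone and whose zeros are not the $J$-fixed points of $T$) --- a discrepancy the paper's terse proof silently passes over.
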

\begin{proof}
Since the map $ A $ is monotone if and only if $ T=(J-A)$ is $ J$--pseudocontractive, the zeros of $ A$ correspond to the $ J$--fixed points of $ T$ (see e.g., \cite{coeu}). Re-writing $(J-T)$ as $ A $ in (\ref{3vc.3}) and applying Theorem \ref{thm3.2} yields the result.
\end{proof}

\section{Numerical Illustrations}
%Experiments}
\noindent In this section, we present some numerical examples to show the accuracy and effectiveness of Algorithms  (\ref{3.3}), (\ref{3v.3}), and (\ref{3b.3}) on $L_{p}([0,1])$, $ 1<p\leq 
2$. Numerical experiments were carried out on Matlab R2013a version. All programs were run on a PC with Intel(R) Core(TM)2 Duo CPU and 3GB RAM. All figures were plotted using the $ \loglog $ plot command, while integrals were approximated using the $\trapz$ command on $ \mathrm{MATLAB}$ over the interval $ \left[0,1\right]$, being partitioned into $ 100 $ subintervals. For all experiments, we shall suppose that $ \frac{1}{p}+\frac{1}{q}=1$, where $p=\frac{3}{2}$. 
\vskip0.25cm\noindent
%where $ p=\frac{3}{2} $
\begin{example}\textbf{(Approximation of zero of Maximal monotone operators)}\label{exam1}
	Let $ A:L_{p}^{\mathbb{R}}([0,1])\longrightarrow L_{q}^{\mathbb{R}}([0,1])$ be defined by 
	\begin{equation}\label{keky}
	(Af)(t)=(t+1)f(t)\quad \textup{for all}  \;t\in [0,1].
	\end{equation}
	Obviously, $ A $ is maximal monotone and bounded. Observe that the normalized duality map\\ $J:L_{p}([0,1])\longrightarrow {L_{q}([0,1])}$ and its inverse $J^{-1}:L_{q}([0,1])\longrightarrow {L_{p}([0,1])}$ can be defined by (see, for instance, \cite{Alber93})
	\begin{eqnarray}
	J(f)= ||f||_{p}^{p-2}f|f|^{2-p}\quad \textup{for all}\;\;f\in L_{p}([0,1])\label{k11ey1}
	\end{eqnarray}
	and
	\begin{equation}\label{k1eya51}
	J^{-1}(g)= ||g||_{q}^{2-q}g|g|^{q-2}\quad \textup{for all}\;\;g\in L_{q}([0,1]),
	\end{equation}
	respectively. Moreover, the norm and inner product on $L_{p}([0,1])$ are defined, respectively, as
	\[ ||\phi||_{p}=\left(\int_{[0,1]}|\phi(t)|^{p}dt \right)^{\frac{1}{p}} \quad\textup{and}\quad\left\langle \phi,\varphi  \right\rangle =\int_{0}^{1}\phi(t)\varphi(t)dt, \]
	for all $ \phi,\varphi\in L_{p}([0,1]).$ The initial points used here are presented in Table \ref{aqw21c12}, while the stopping criteria is given by $ ||x_{n}-x_{n-1}||_{p}<\mathbb{TOL},$ where $\mathbb{TOL}$ represents the tolerance error. The $\mathbb{TOL}$ used in each experiment is also portrayed in Table \ref{aqw21c12}, whereas, $\mathbb{NFE}$ denotes the number of function evaluations. The control sequences $\{\alpha_{n}\}$ and $ \{\theta_{n}\}$ for this example are given by equation (\ref{seq}), where $ n:=i+1,$ $ i\geq 1.$\\

	\noindent	Figure \ref{fig:prob1_61a131} and the second part of Table \ref{aqw21c12} show the numerical results for Algorithm (\ref{3.3}) when the maximal monotone operator is defined by (\ref{keky}), with diverse choices of initial points and $ \mathbb{TOL}=e$--$6.$ Moreover, Figure \ref{fig:prob1_613123} and the first part of Table \ref{aqw21c12} illustrate the strong convergence of our algorithm using several $\mathbb{TOL}$s. These confirm the efficiency and accuracy of the proposed method. 
	
	\vskip1.5pt
	\begin{table}[!ht]
		\centering
		\caption{Computational results and time for Example \ref{exam1}}
		\vskip-10pt
		\begin{tabular}[t]{ccccccc}
			\toprule
			$ x_{1} $&$ \mathbb{TOL} $&$ \mathbb{NFE} $&&$ ||x_{\mathbb{NFE}}-x_{\mathbb{NFE}-1}||_{p} $&&Time (secs)\\
			\midrule
			$ \frac{1}{1+t^{2}} $& $ e $--$ 3 $ &$ 15  $ &&  $8.4352\times 10^{-4} $&&$1.6066\times 10^{-2} $\\
			&$ e $--$ 6 $&$112$ &&  $ 9.7360\times 10^{-7} $&&$6.4984\times 10^{-2}$  \\
			&$ e $--$ 9 $&$1,114 $  &&  $9.9867\times 10^{-10} $&&$5.6356\times 10^{-1}$ \\	
			&$ e $--$ 12 $& $ 12,266 $   &&  $9.9987\times 10^{-13}$&&$5.9324$ \\
			&$ e $--$ 15 $& $ 142,097$   &&  $9.9999\times 10^{-16}$&&$69.6167$ \\
			
			\midrule
			$ \mathbb{TOL} $&$ x_{1} $&$ \mathbb{NFE} $&&$ ||x_{\mathbb{NFE}}-x_{\mathbb{NFE}-1}||_{p} $&&Time (secs)\\
			\midrule
			$ e $--$6$&$e^{t} $  &$ 128 $ &&  $ 9.9784\times 10^{-7} $&&$8.0637\times 10^{-2} $\\
			&$ t^{2}+1 $&$125 $  &&  $9.7860\times 10^{-7} $&&$7.2729\times 10^{-2}$  \\
			&$ \cos(t)e^{-t} $& $ 101$   &&  $9.9679\times 10^{-7}$&&$5.8967\times 10^{-2}$ \\
			&$ \frac{1}{1+t\sin(t)} $& $ 112 $   &&  $9.8481\times 10^{-7}$&&$6.5506\times 10^{-2}$ \\
			
			\bottomrule
		\end{tabular}
		\label{aqw21c12}
	\end{table}%
	
	\vskip-10pt
	\begin{figure}[!h]
		\centering
		\begin{minipage}{.5\textwidth}
			\centering
			\includegraphics[width=\textwidth]{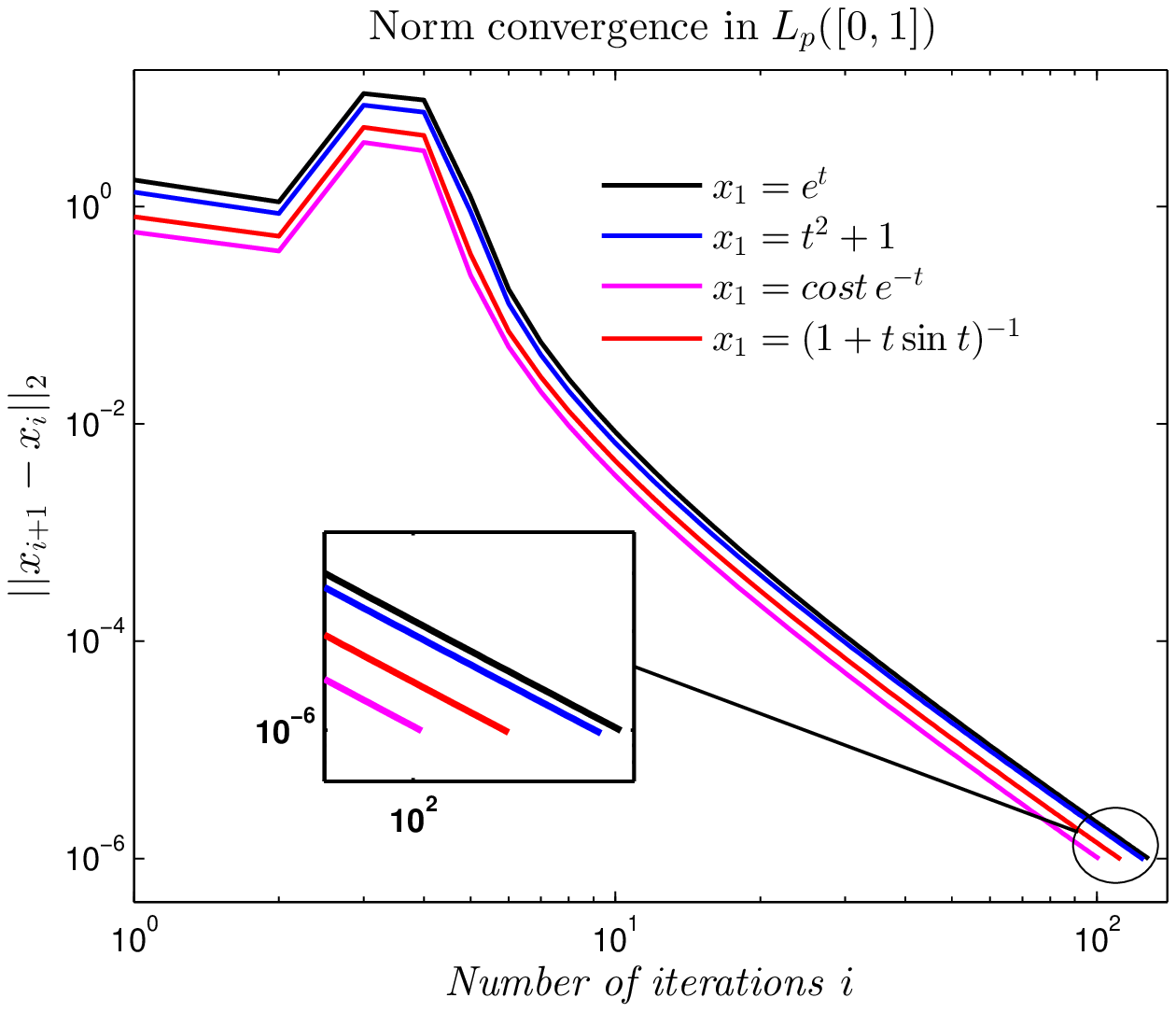}
			\vskip-13pt
			\caption{$ \loglog $ plots in Table \ref{aqw21c12}}
			\label{fig:prob1_61a131}
		\end{minipage}%
		\begin{minipage}{0.5\textwidth}
			\centering
			\includegraphics[width=\textwidth]{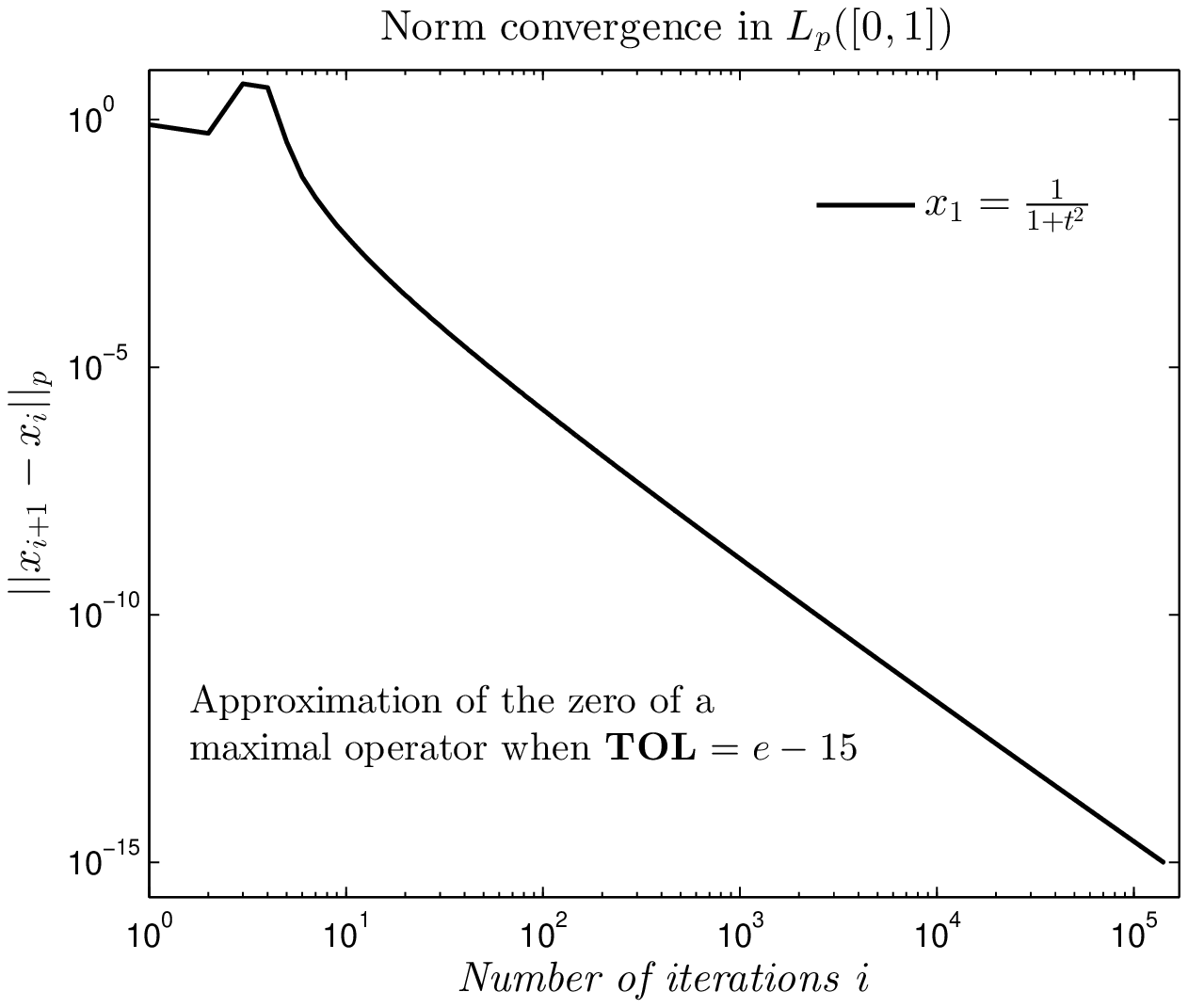}
			%					c10p10
			\vskip-13pt
			\caption{$ \loglog $ plots in Table \ref{aqw21c12}}
			\label{fig:prob1_613123}
		\end{minipage}
		\vskip-10pt
%		\caption{$ \loglog $ plots in Table \ref{aqw21c12}}
	\end{figure}
\end{example}
\newpage

\begin{example}\textbf{(Estimation of minimizers)}\label{exam2}
	\noindent	Define a functional $ f:L_{p}([0,1]) \longrightarrow \mathbb{R} $ by $ f(x)=||x||_{p}$, for all $ x\in  L_{p}([0,1])$. The subdifferential of $ f $ at $ x\in L_{p}([0,1])$ is given by 
	\begin{equation*}
	\partial f(x)=\begin{cases}
	\left\lbrace \dfrac{x}{||x||_{p}}\right\rbrace  ,\quad x\neq \mathbf{0},\\[10pt]
	\overline{B}(0,1),\quad\;\; x= \mathbf{0},
	\end{cases}
	\end{equation*} 
	where $ \mathbf{0} $ is the zero function and $ \overline{B}(0,1) $ is the closed unit ball in $ L_{p}([0,1])$. Then, the map $ F:=\partial f $ is maximal monotone and bounded. The normalized duality map and its inverse are defined in (\ref{k11ey1}) and (\ref{k1eya51}), respectively. Moreover, the initial points and $ \mathbb{TOL}$s are presented in Table \ref{aqw2c1c12}, where acceptably paired sequences $\{\alpha_{n}\}$ and $ \{\theta_{n}\}$ are defined by equation (\ref{seq}), with $ n:=i+1,$ $ i\geq 1.$ \\
	
	\noindent	As illustrated in Figure \ref{fig:pcrcob1_61a131} and the second part of Table \ref{aqw2c1c12}, the sequence generated by Algorithm (\ref{3b.3}) converges strongly to the zero of $ \partial ||\cdot||_{p},$ which is also a minimizer of $||\cdot||_{p}$, under different choices of initial points. Choosing $ x_{1}=\frac{1}{1+t^{2}}$, we infer from Table \ref{aqw2c1c12} and Figure \ref{fig:ccprob1_613123} that the sequence $ \{x_{n}\},$ under various $ \mathbb{TOL}$s, converges to the minimizer of $||\cdot||_{p}.$

	\vskip-1.5pt
	\begin{table}[!ht]
		\centering
		\caption{Computational results and time for Example \ref{exam2}}
		\vskip-10pt
		\begin{tabular}[t]{ccccccc}
			\toprule
			$ x_{1} $&$ \mathbb{TOL} $&$ \mathbb{NFE} $&&$ ||x_{\mathbb{NFE}}-x_{\mathbb{NFE}-1}||_{p} $&&Time (secs)\\
			\midrule
			$ \frac{1}{1+t^{2}} $& $ e $--$ 1 $ &$ 47  $ &&  $9.9459\times 10^{-2} $&&$0.0427$\\
			&$ e $--$ 2 $&$465$ &&  $ 9.9863\times 10^{-3} $&&$0.3459$  \\
			&$ e $--$ 3 $&$4,643 $  &&  $9.9971\times 10^{-4} $&&$3.3063$  \\	
			&$ e $--$ 4 $& $ 10,339 $   &&  $9.6881\times 10^{-5}$&&$7.1476$ \\
			\midrule
			$ \mathbb{TOL} $&$ x_{1} $&$ \mathbb{NFE} $&&$ ||x_{\mathbb{NFE}}-x_{\mathbb{NFE}-1}||_{p} $&&Time (secs)\\
			\midrule
			$ e $--$ 2 $&$e^{t} $  &$ 586 $ &&  $ 9.9833\times 10^{-3} $&&$0.4492$\\
			&$ t^{2}+1 $&$586 $  &&  $9.9844\times 10^{-3} $&&$3.3063$  \\	
			&$ \cos(t)e^{-t} $& $ 586$   &&  $9.9818\times 10^{-3}$&&$0.4416$ \\
			&$ \frac{1}{1+t\sin(t)} $& $ 465 $   &&  $9.9853\times 10^{-3}$&&$0.3530$ \\
			\bottomrule
		\end{tabular}
		\label{aqw2c1c12}
	\end{table}%
	\vskip-10pt
	\begin{figure}[!h]
		\centering
		\begin{minipage}{.5\textwidth}
			\centering
			\includegraphics[width=\textwidth]{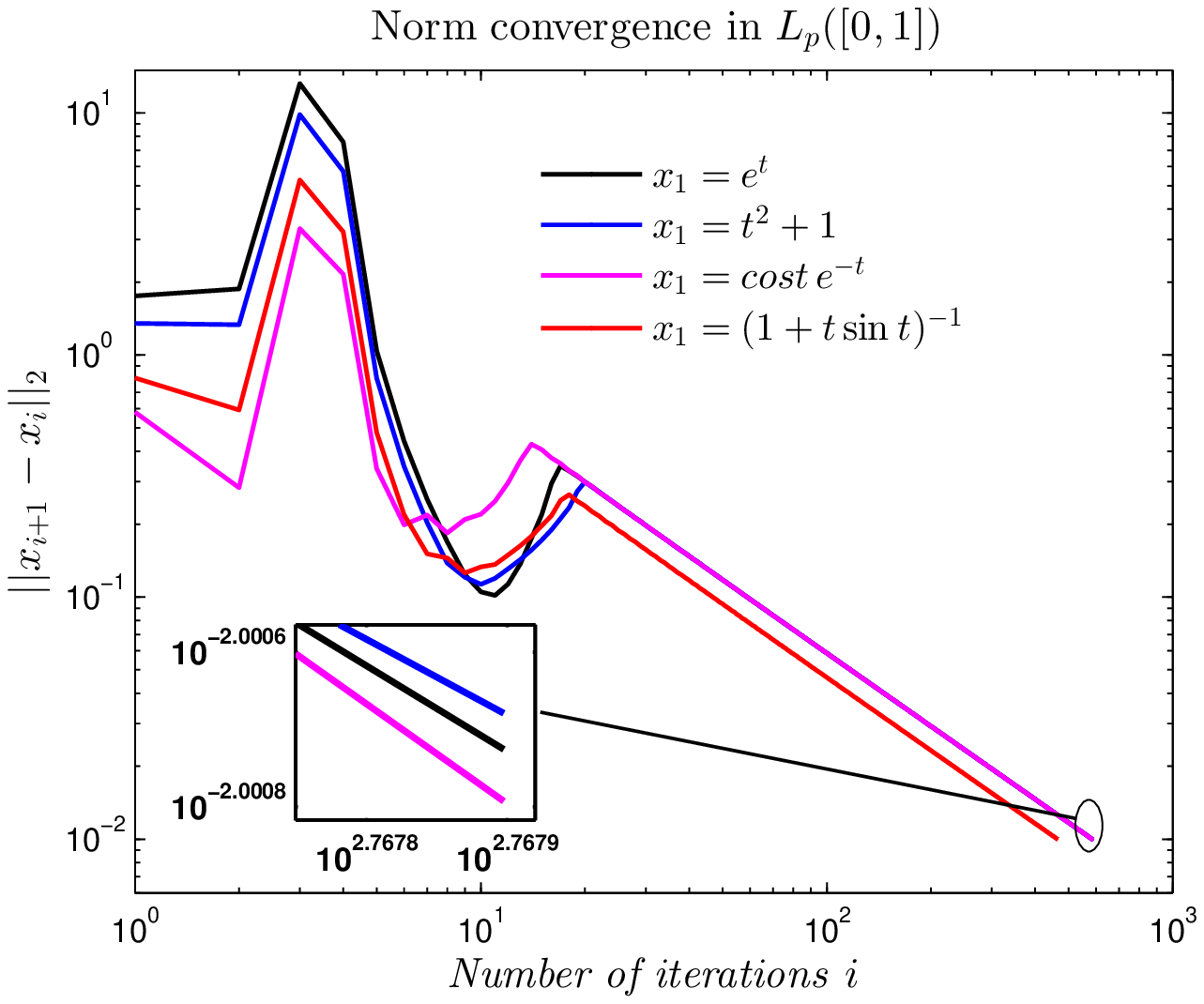}
			\vskip-13pt
			\caption{$ \loglog $ plots in Table \ref{aqw2c1c12}}
			\label{fig:pcrcob1_61a131}
		\end{minipage}%tol_15
		\begin{minipage}{0.5\textwidth}
			\centering
			\includegraphics[width=\textwidth]{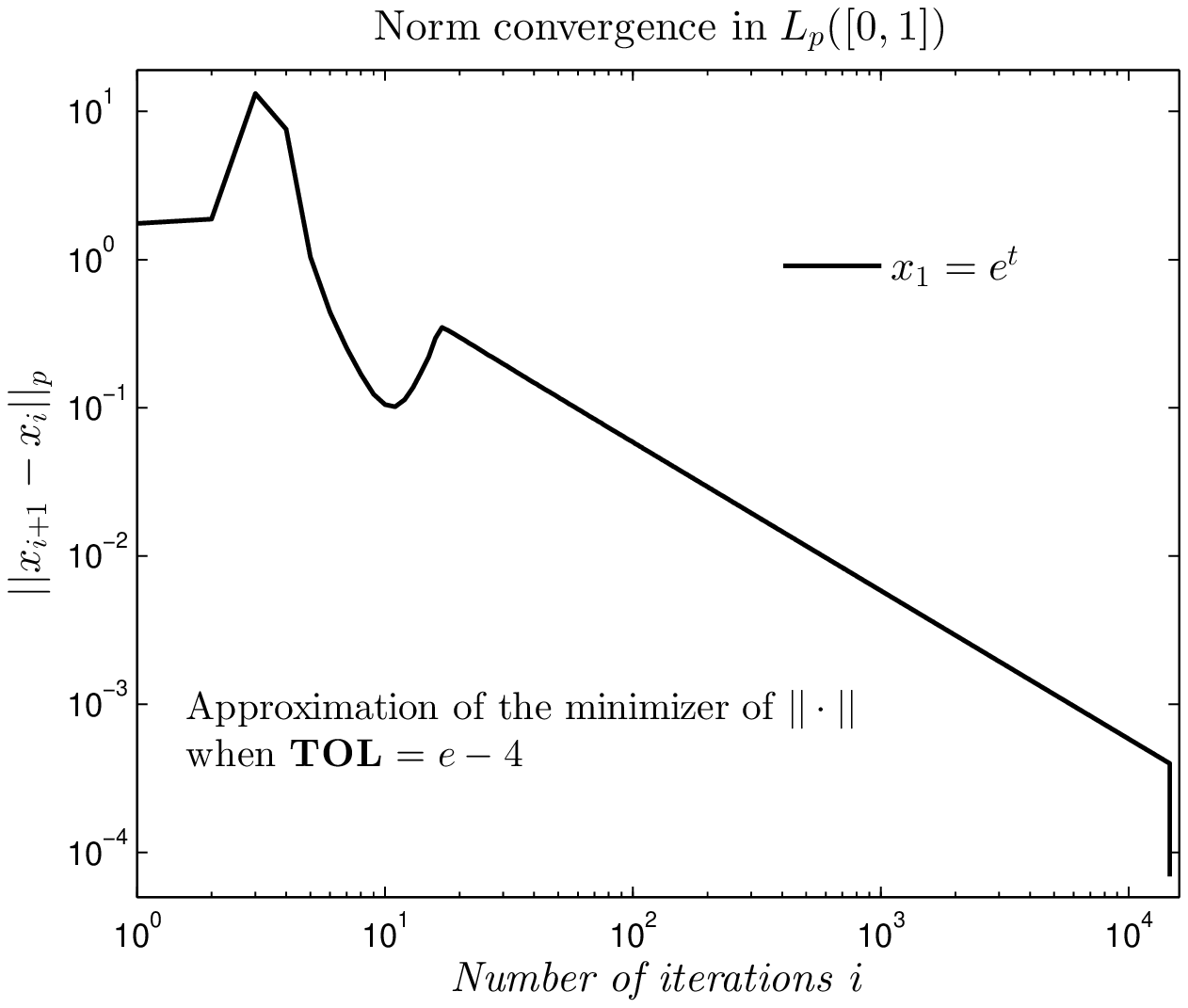}
			%					c10p10
			\vskip-13pt
			\caption{$ \loglog $ plots in Table \ref{aqw2c1c12}}
			\label{fig:ccprob1_613123}
		\end{minipage}
	\end{figure}

\end{example}
\newpage

\begin{example}\textbf{(Approximation of Hammerstein integral solutions)}\label{exam3}
	Define $ F,K:L_{p}^{\mathbb{R}}([0,1])\longrightarrow L_{p}^{\mathbb{R}}([0,1])$ by $ (Fu)(t)=(t+1)u(t)$ and $ (Ku)(t)=u(t)$, for all $ t\in [0,1]$, respectively. Then, it is clear that $ F $ and $ K $ are maximal monotone and bounded. The initial points and $ \mathbb{TOL}$s for this example are presented in Table \ref{aqw21cc12}. For the algorithm's stopping criteria, we have set it to satisfy both of these: $ ||u_{n}-u_{n-1}||_{p}<\mathbb{TOL}$ and $ ||v_{n}-v_{n-1}||_{p}<\mathbb{TOL}.$ The aforementioned acceptably paired sequences $\{\alpha_{n}\}$ and $ \{\theta_{n}\}$ used in the previous examples are also used here.\\
	
	\noindent	As depicted in Figures \ref{fig:pcrob1_61a131} and \ref{fig:cprob1_613123}, as well as the second portion of Table \ref{aqw21cc12}, the sequences $ \{u_{n}\} $ and $ \{v_{n}\} $ generated by Algorithm (\ref{3v.3}) converge to $u^*=\mathbf{0}$ and $v^*=\mathbf{0}$, respectively. Using different tolerance errors, we present the $ \loglog$ plots of $ ||u_{n}-u_{n-1}||_{p}$ and $ ||v_{n}-v_{n-1}||_{p}$ in Figures \ref{fig:pccrob1_61a131}--\ref{fig:cvprcob1_613123} as well as their numerical values in Table \ref{aqw21cc12}, where the initial points $ u_{1}=\frac{1}{1+t^{2}} $ and $v_{1}=\frac{1}{1+t\sin(t)}$. Hence, we conclude that our algorithm successfully estimates the solution of Hammerstein integral equations.
	
	\vskip-1.5pt
	\begin{table}[!ht]
		\centering
		\caption{Computational results and time for Example \ref{exam3}}
		\vskip-10pt
		\begin{tabular}[t]{lcccccccc}
			\toprule
			&$ \mathbb{TOL} $&$ \mathbb{NFE} $&&$ ||u_{\mathbb{NFE}}-u_{\mathbb{NFE}-1}||_{p} $&&$ ||v_{\mathbb{NFE}}-v_{\mathbb{NFE}-1}||_{p} $&&Time (secs)\\
			\midrule
			$ u_{1}=\frac{1}{1+t^{2}} $& $ e $--$ 3 $ &$ 25  $ &&  $9.6154\times 10^{-4} $&&  $1.8714\times 10^{-4} $&&$2.4732\times 10^{-2} $\\
			$v_{1}=\frac{1}{1+t\sin(t)} $&$ e $--$ 6 $&$247$ &&  $ 7.9082\times 10^{-7} $&& $ 9.8339\times 10^{-7} $&&$2.5633\times 10^{-1}$  \\
			&$ e $--$ 9 $&$3,231 $  &&  $1.0182\times 10^{-10} $&&  $9.9992\times 10^{-10} $&&$3.1548$ \\	
			&$ e $--$ 12 $& $ 39,934 $   &&  $9.9997\times 10^{-13}$ &&  $7.2162\times 10^{-13}$&&$41.260$ \\

			\midrule
			$ \mathbb{TOL} $&$ u_{1} $&$ \mathbb{NFE} $&&$ ||u_{\mathbb{NFE}}-u_{\mathbb{NFE}-1}||_{p} $&&$ ||v_{\mathbb{NFE}}-v_{\mathbb{NFE}-1}||_{p} $&&Time (secs)\\
			\midrule
			$ e $--$6$&$e^{t} $  &$  242$ &&  $ 9.9425\times 10^{-7} $&&  $9.5546\times 10^{-7} $&&$2.4282\times 10^{-1} $\\
			&$ t^{2}+1 $&$246 $  &&  $9.4360\times 10^{-7} $&&  $9.9829\times 10^{-7} $&&$2.5453\times 10^{-1}$  \\
			&$ \cos(t)\,e^{-t} $& $ 247$   &&  $5.6191\times 10^{-7}$&&  $9.9832\times 10^{-7}$&&$2.5403\times 10^{-1}$ \\
			&$ e^{-t} $& $ 209 $   &&  $9.9707\times 10^{-7}$&&  $8.3985\times 10^{-7}$&&$2.3012\times 10^{-1}$ \\

			\bottomrule
		\end{tabular}
		\label{aqw21cc12}
	\end{table}%
	
	\vskip-10pt
	\begin{figure}[!h]
		\centering
		\begin{minipage}{.5\textwidth}
			\centering
			\includegraphics[width=\textwidth]{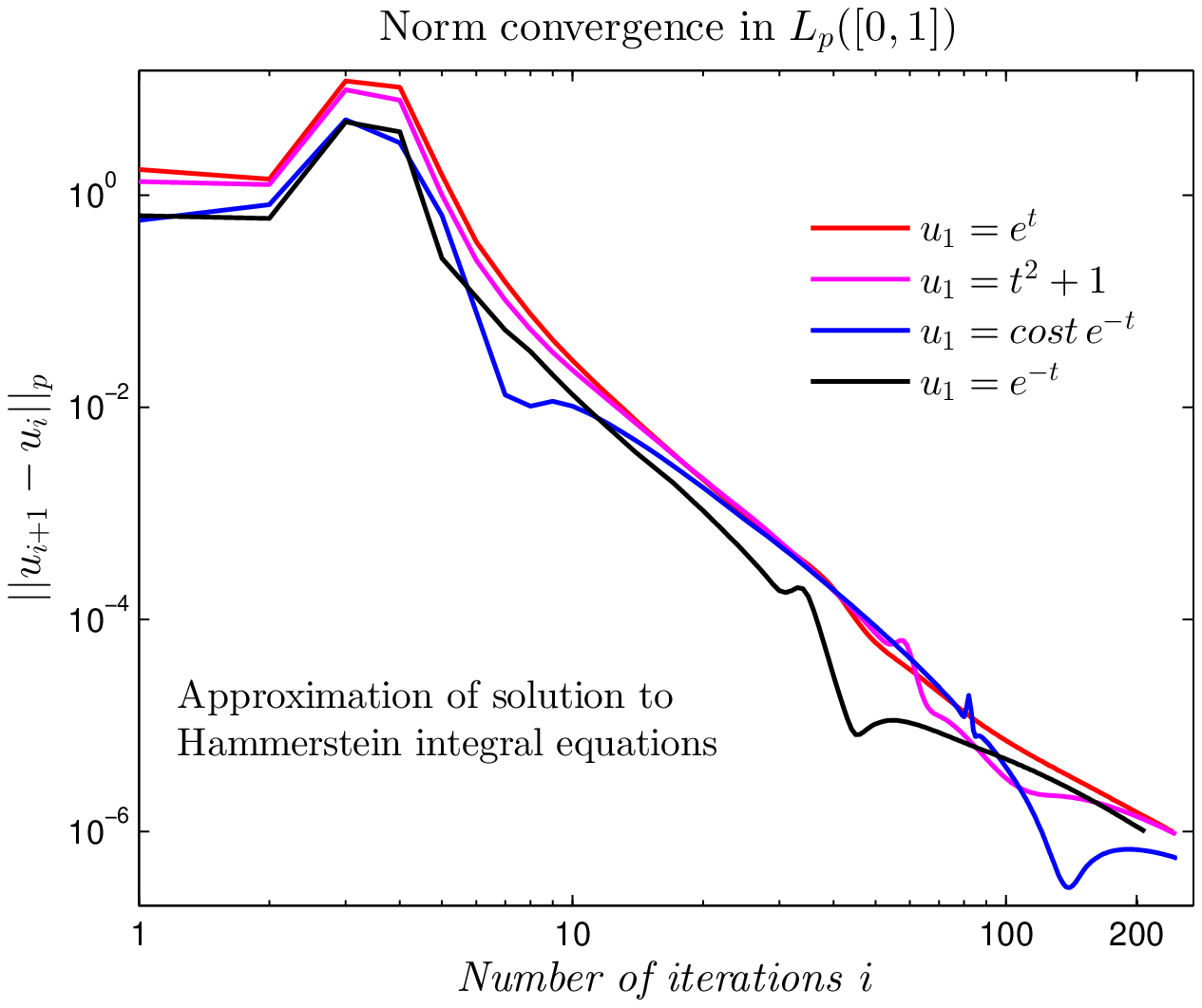}
			\vskip-13pt
			\caption{$\loglog $ plots in Table \ref{aqw21cc12}}
			\label{fig:pcrob1_61a131}
		\end{minipage}%
		\begin{minipage}{0.5\textwidth}
			\centering
			\includegraphics[width=\textwidth]{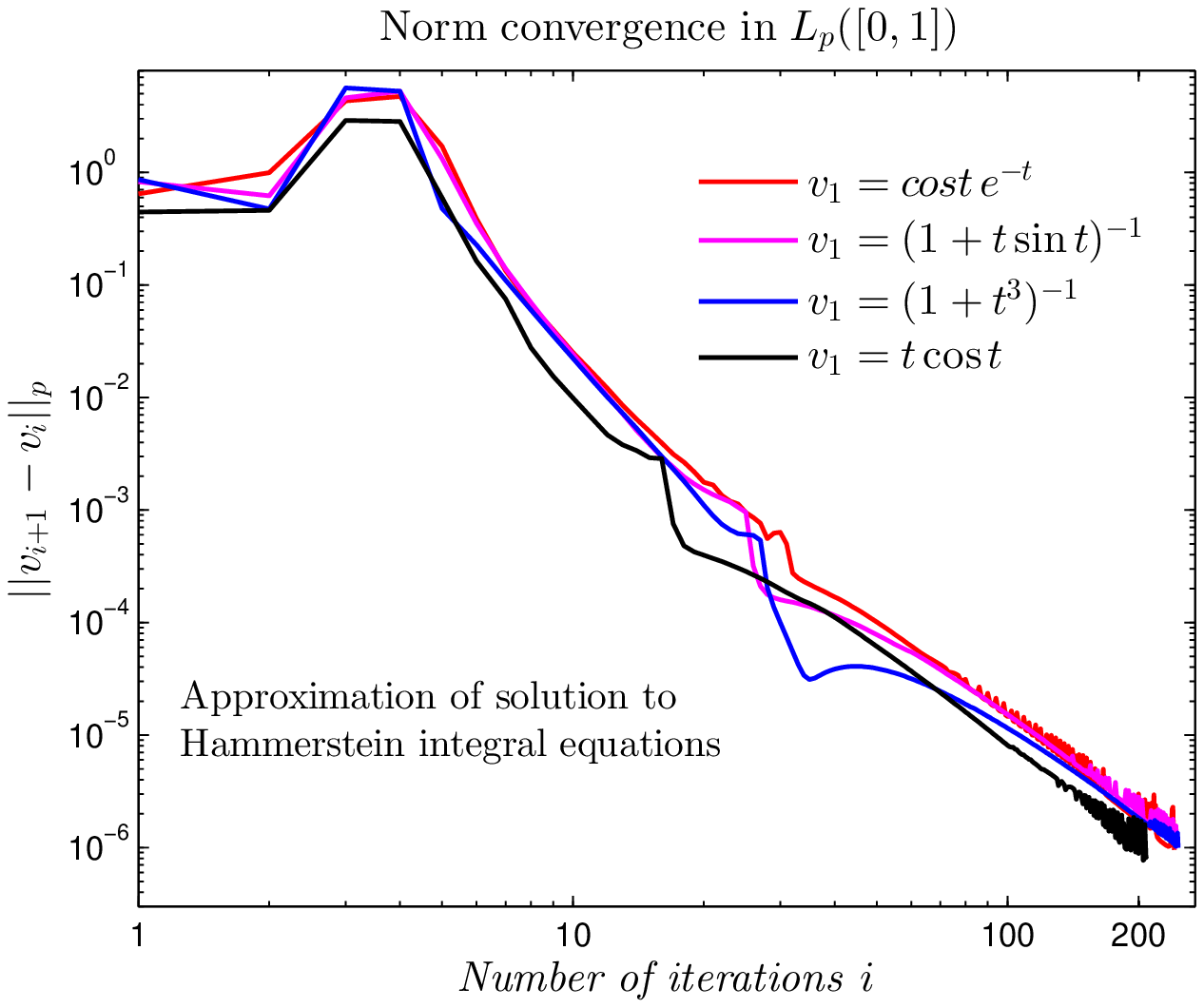}
			%					c10p10
			\vskip-13pt
			\caption{$\loglog $ plots in Table \ref{aqw21cc12}}
			\label{fig:cprob1_613123}
		\end{minipage}
%		\caption{}
	\end{figure}

	\vskip-10pt
	\begin{figure}[!h]
		\centering
		\begin{minipage}{.5\textwidth}
			\centering
			\includegraphics[width=\textwidth]{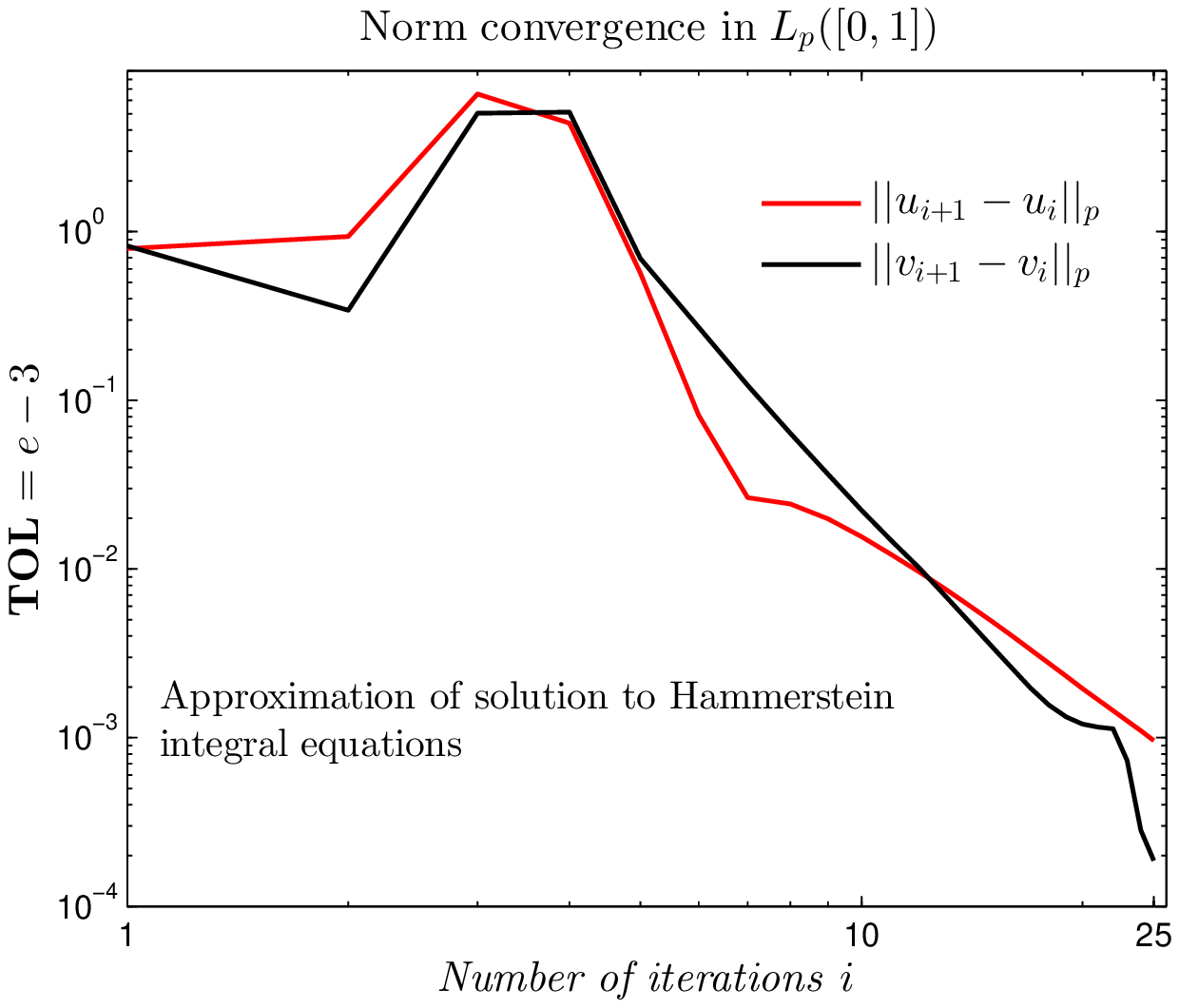}
			\vskip-13pt
			\caption{$\loglog $ plots in Table \ref{aqw21cc12}}
			\label{fig:pccrob1_61a131}
		\end{minipage}%
		\begin{minipage}{0.5\textwidth}
			\centering
			\includegraphics[width=\textwidth]{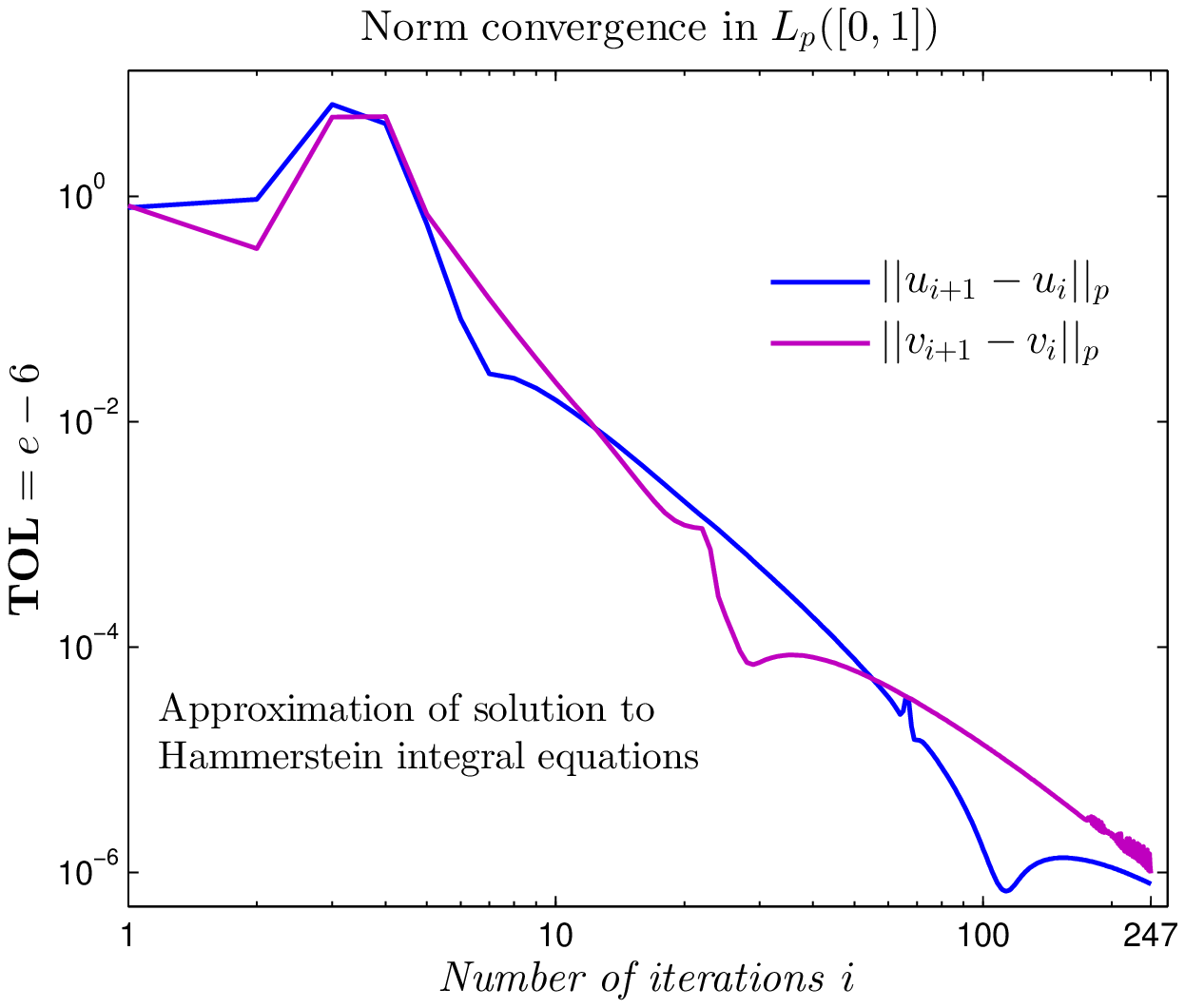}
			%					c10p10
			\vskip-13pt
			\caption{$\loglog $ plots in Table \ref{aqw21cc12}}
			\label{fig:cprcob1_613123}
		\end{minipage}
		
	\end{figure}
	\vskip-1.5pt
	
	\vskip-10pt
	\begin{figure}[!h]
		\centering
		\begin{minipage}{.5\textwidth}
			\centering
			\includegraphics[width=\textwidth]{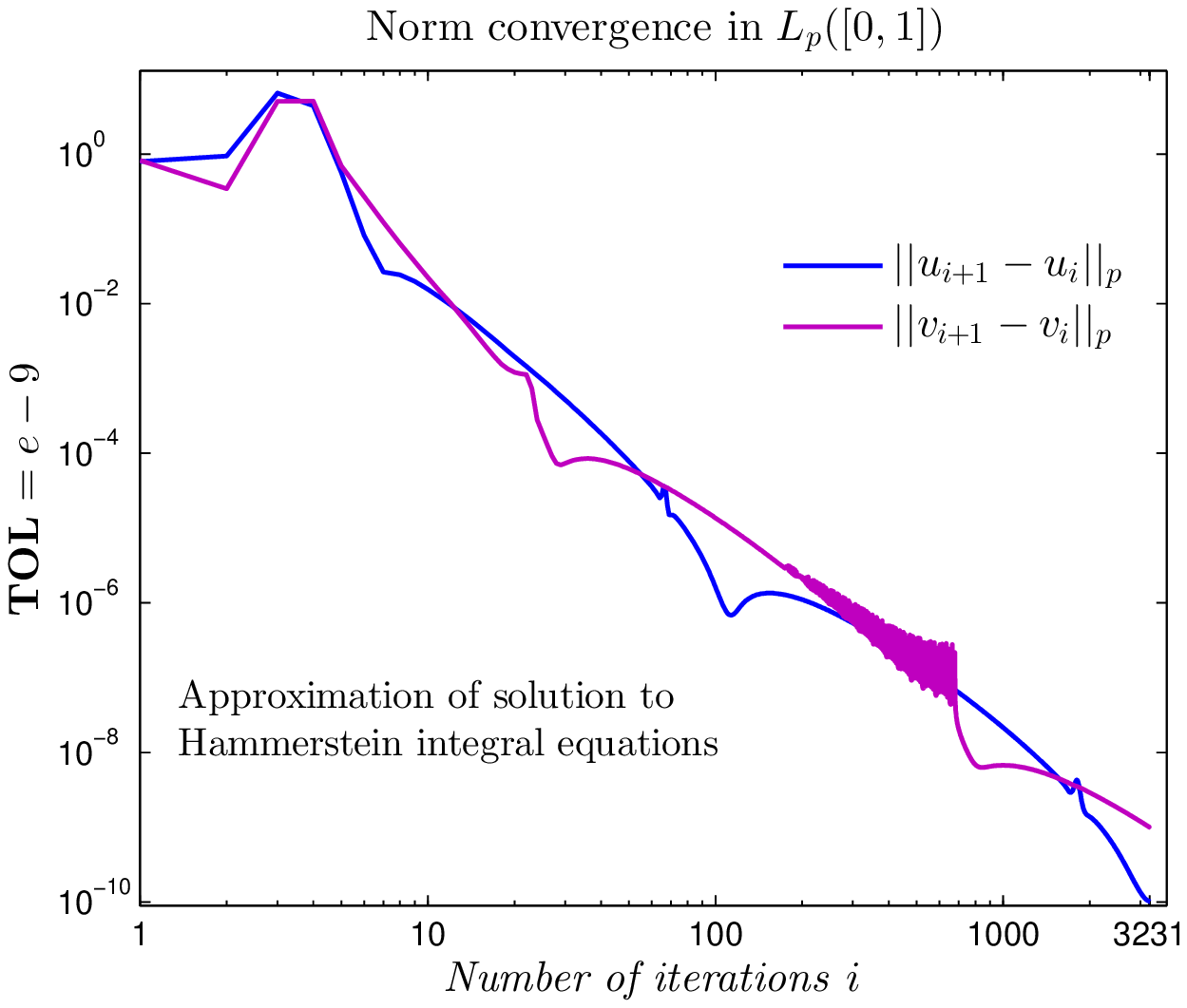}
			\vskip-13pt
			\caption{$\loglog $ plots in Table \ref{aqw21cc12}}
			\label{fig:pccrovb1_61a131}
		\end{minipage}%
		\begin{minipage}{0.5\textwidth}
			\centering
			\includegraphics[width=\textwidth]{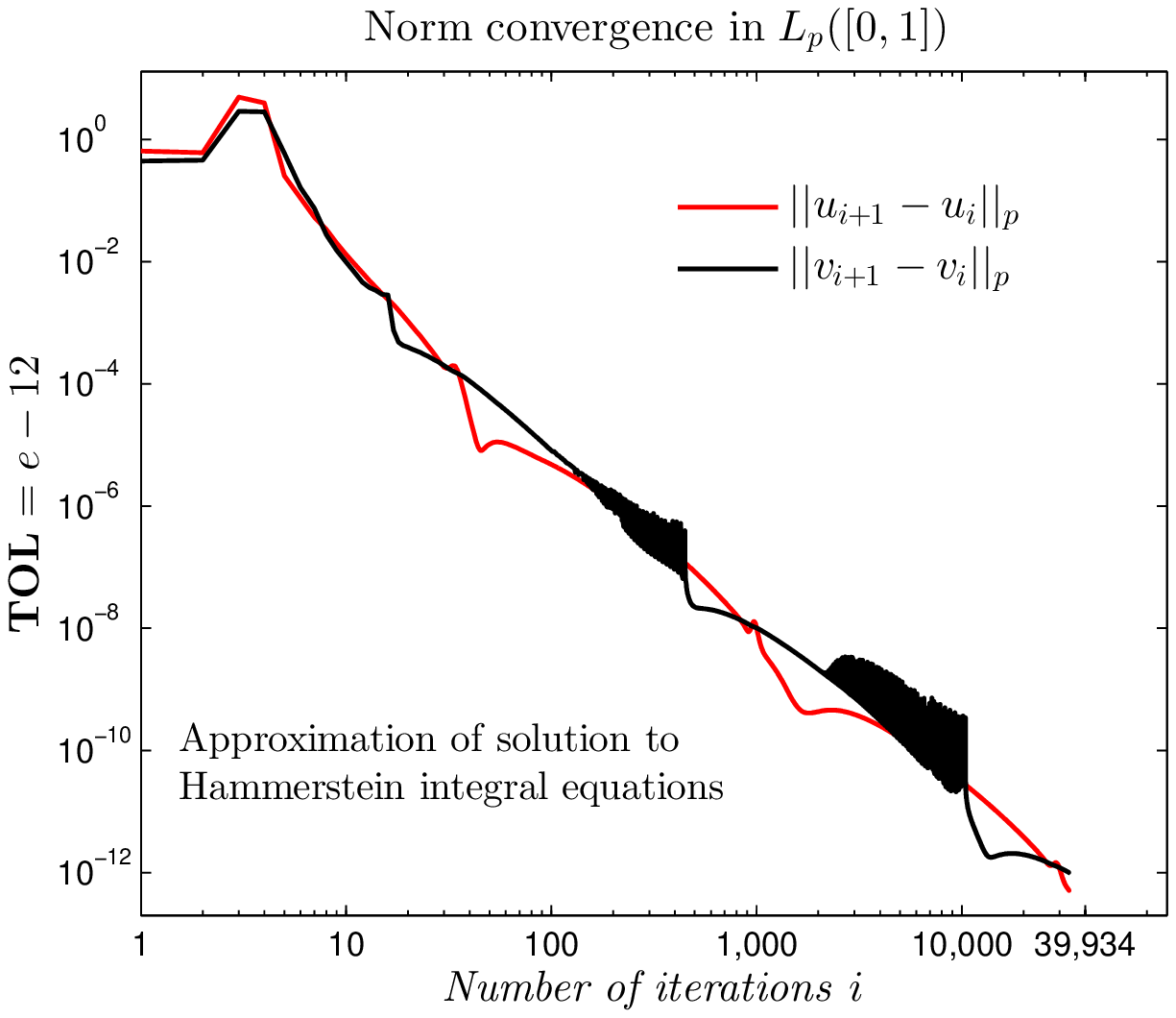}
			%					c10p10
			\vskip-13pt
			\caption{$\loglog $ plots in Table \ref{aqw21cc12}}
			\label{fig:cvprcob1_613123}
		\end{minipage}
		
	\end{figure}
	%		\vskip-1.5pt
	
\end{example}
\vskip20pt

\newpage

\end{document}